\documentclass[11pt]{article}

\usepackage[margin=1.25in]{geometry}

\usepackage{amsmath,amsthm,amssymb,tikz,bbm,float}
\usepackage{lineno}

\newtheorem{theorem}{Theorem}[section]
\newtheorem{corollary}[theorem]{Corollary}
\newtheorem{lemma}[theorem]{Lemma}
\newtheorem{proposition}[theorem]{Proposition}
\newtheorem{conjecture}[theorem]{Conjecture}
\theoremstyle{definition}
\newtheorem{definition}[theorem]{Definition}

\newtheorem{example}[theorem]{Example}
\newtheorem{note}[theorem]{Note}

\title{Categorification of integral group rings \\ extended by one dimension}

\author{Andrew Schopieray}

\date{}

\begin{document}

\maketitle
%

\begin{abstract}
The integral group rings $\mathbb{Z}G$ for finite groups $G$ are precisely those fusion rings whose basis elements have Frobenius-Perron dimension 1, and each is categorifiable in the sense that it arises as the Grothendieck ring of a fusion category.  Here we analyze the structure and representation theory of fusion rings with a basis of elements whose Frobenius-Perron dimensions take exactly one value distinct from 1.  Our goal is a set of results which assist in characterizing when such fusion rings are categorifiable.  As proof of concept, we completely classify the categorifiable near-group fusion rings for an infinite collection of finite abelian groups, a task that to-date has only been completed for three such groups.
\end{abstract}




\section{Introduction}\label{sec:intro}

\par Fusion rings capture the combinatorial essence of many categorical constructions which appear in representation theory.  Their ilk include $C$-algebras, reality-based algebras, table algebras, and association schemes,\ to name a few.  For a survey and comparison of some of these notions one can refer to \cite{MR2535395}.  The canonical example of a fusion ring is the integral group ring $\mathbb{Z}G$ of a finite group $G$ whose basis consists of invertible elements, i.e.\ those which possess a multiplicative inverse, which occurs if and only if the element has Frobenius-Perron dimension 1.  These vast infinite families of fusion rings are \emph{categorifiable} inasmuch as they can be realized as the Grothendieck rings of fusion categories: the categories of finite-dimensional $G$-graded complex vector spaces in this case.  For a given fusion ring, whether it possesses a categorification can be an incredibly difficult question to answer even though it is known that for each fusion ring, there are only finitely many categorifications up to equivalence \cite[Theorem 9.1.4]{tcat}.  This categorification question is open even for the simplest generalization of $\mathbb{Z}G$, which is to assume there exists a unique basis element $\rho$ which is not invertible, and therefore the remainder of the basis elements form a finite group $G$ under multiplication.  These \emph{near-group} fusion rings are indexed by the group $G$ and a nonnegative integer level $\ell$ such that $\rho^2=\ell\rho+\sum_{g\in G}g$.  For this reason we denote the near-group fusion rings by $R(G,\ell)$ in what follows.  With the demand that the categorification be unitary, which is a nontrivial assumption \cite{MR4327964}, it is known that the only $R(G,\ell)$ which are categorifiable for nonabelian $G$ are the extra-special $2$-groups of order $2^{2m+1}$ for $m\in\mathbb{Z}_{\geq1}$ with $\ell=2^m$ \cite[Theorem 6.1]{MR3635673}.  When $G$ is abelian, $R(G,0)$ is always categorifiable \cite{MR1659954} and so the question is more subtle: for which $\ell\in\mathbb{Z}_{\geq0}$ is $R(G,\ell)$ categorifiable?  Victor Ostrik proved in the context of classifying low-rank fusion categories that $R(C_1,\ell)$ is categorifiable if and only if $\ell\in\{0,1\}$ \cite{ostrik} and $R(C_2,\ell)$ is categorifiable if and only if $\ell\in\{0,1,2\}$ \cite{ost15} where $C_n$ is the cyclic group of order $n\in\mathbb{Z}_{\geq1}$.  Hannah Larson later proved that $R(C_3,\ell)$ is categorifiable if and only if $\ell\in\{0,2,3,6\}$ \cite[Section 4]{MR3229513}.  Over the 20 years since the term ``near-group categories'' was coined \cite{MR1997336}, the question of when $R(G,\ell)$ is categorifiable with $G$ abelian has been completely answered only for these three finite abelian groups.  As an application of our general theory outlined below, we extend these results to the elementary abelian $2$-groups, i.e.\ $G=C_2^m$ for $m\in\mathbb{Z}_{\geq1}$.

\begin{theorem}\label{theend}
Let $G$ be an elementary abelian $2$-group and $\ell\in\mathbb{Z}_{\geq0}$.  Then the near-group fusion ring $R(G,\ell)$ is categorifiable if and only if
\begin{itemize}
\item[(a)] $\ell=0$,
\item[(b)] $G=C_2$ and $\ell=1$ or $\ell=2$, or
\item[(c)] $G=C_2^2$ and $\ell=4$.
\end{itemize}
\end{theorem}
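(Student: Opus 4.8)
The plan is to reduce categorifiability of $R(G,\ell)$ to a short Diophantine list by means of the representation theory of the fusion ring, and then to settle each surviving case by either an explicit categorification or a sharper obstruction. Write $n=|G|=2^m$ and let $d=\tfrac12(\ell+\sqrt{\ell^2+4n})$ be the Frobenius--Perron dimension of $\rho$, so that $\mathrm{FPdim}\,R(G,\ell)=2n+\ell d$. By the general theory above, every basis element is self-dual, the ring is commutative, and it has exactly $n+1$ characters: the $n-1$ characters restricting to a nontrivial character of $G$ and annihilating $\rho$, each with formal codegree $n$, together with the two characters trivial on $G$, with formal codegrees $f_{\pm}=2n+\ell d_{\pm}$ where $d_{\pm}=\tfrac12(\ell\pm\sqrt{\ell^2+4n})$. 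First I would dispose of the boundary data: $\ell=0$ is categorifiable for every finite abelian $G$ by the Tambara--Yamagami construction \cite{MR1659954}, giving (a), and the case $G=C_2$ is settled completely by Ostrik \cite{ost15}, giving the $m=1$ content of (b). It therefore remains to treat $\ell>0$ with $m\ge 2$ and to show that the only categorifiable such ring is $R(C_2^2,4)$.

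For these cases the first constraint comes from the pointed fusion subcategory generated by $G$, of Frobenius--Perron dimension $n$; since the ratio of the Frobenius--Perron dimension of a fusion category by that of a fusion subcategory is an algebraic integer, $\mathrm{FPdim}(\mathcal{C})/n=2+\ell d/n$ must be an algebraic integer. When $\ell^2+4n$ is not a perfect square this forces $n\mid\ell^2$, i.e.\ $2^{\lceil m/2\rceil}\mid\ell$; when $\ell^2+4n=s^2$ the ring is \emph{integral} and solving $(s-\ell)(s+\ell)=2^{m+2}$ gives $d=2^{j}$ with $\ell=2^{j}-2^{m-j}$ for $m/2<j\le m$, a finite list for each $m$. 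Crucially, the formal codegrees, the d-number property of $\mathrm{FPdim}(\mathcal{C})$, and all subcategory divisibilities collapse to the single condition $n\mid\ell^2$, so dimension arithmetic alone cannot bound $\ell$; the sharper categorifiability criteria from the preceding sections are genuinely needed.

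In the integral regime a categorification would be an integral fusion category with invertibles $C_2^m$ and a single noninvertible simple of dimension $2^{j}$ (for example $R(C_2^2,3)$ with $d=4$, or $R(C_2^3,2)$ and $R(C_2^3,7)$); I would eliminate each of these for $m\ge 2$ using the structure theory of integral, hence solvable, fusion categories together with Frobenius--Schur indicator constraints \cite{tcat}, leaving no integral example. In the non-integral regime, which contains the infinite families $\ell=2^{\lceil m/2\rceil}k$ and in particular all $\ell=2^m=|G|$, the decisive input is the analysis of the Drinfeld center $Z(\mathcal{C})$ and the higher Frobenius--Schur indicators developed earlier: the twists and $S$-matrix of $Z(\mathcal{C})$ must assemble into a consistent modular datum carrying the prescribed Galois action, and the resulting $2$-adic congruences on $\ell$ leave only finitely many candidates for each $m$ and none once $m$ is large. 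Turning these modular constraints into an effective bound that simultaneously kills every $\ell>0$ for $m\ge 3$ and every even $\ell\ne 4$ for $m=2$ is the main obstacle, precisely because one must extract new inequalities from the categorical data that the ring-theoretic invariants do not see.

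Finally, for the surviving case $R(C_2^2,4)$, giving (c), I would exhibit the categorification directly: a near-group category of type $(C_2^2,4)$ is known to exist via Izumi's Cuntz-algebra realization and the Evans--Gannon analysis of near-group doubles, while for $m\ge 3$ the analogous system with $\ell=2^m$ has no solution, in agreement with the obstruction above. Assembling the boundary cases, the integral elimination, the non-integral bound, and this single construction then yields exactly the list (a)--(c).
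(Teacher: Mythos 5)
Your outline gets the boundary cases right ($\ell=0$ via Tambara--Yamagami, $G=C_2$ via Ostrik, and the Izumi construction for $R(C_2^2,4)$), but there are two genuine gaps in the middle. First, your elimination of $0<\ell<|G|$ is incomplete. The divisibility $n\mid\ell^2$ coming from the pointed subcategory is strictly weaker than what is needed: for example $\ell=2^{\lceil m/2\rceil}$ passes that test for every $m\ge 3$ while still satisfying $0<\ell<n$, so these levels are not ``contained in the families $\ell=k n$'' and your center analysis (which, like the paper's, is set up for $\ell$ a multiple of $|G|$) never reaches them. The paper instead kills the whole range $0<\ell<|G|$ by a representation-theoretic argument (Proposition \ref{noncom}: for $0<\ell<|G|-1$ a categorification would force $R$ to be noncommutative, contradicting commutativity of $R(G,\ell)$ for abelian $G$) together with the external result \cite[Theorem 1.2]{MR1997336} for $\ell=|G|-1$; and for $\ell\ge|G|$ it upgrades $n\mid\ell^2$ to $n\mid\ell$ via Proposition \ref{prop1}. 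Your proposed treatment of the integral cases via ``integral, hence solvable'' is also not available: solvability of integral fusion categories is not a theorem, so that step would fail as stated.

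Second, and more seriously, the core of the theorem --- an effective bound on $k$ for $\ell=kn$ --- is exactly the step you defer as ``the main obstacle,'' so the proposal does not actually prove the result. The paper's mechanism is concrete: label the simple summands of $I(g)$ and $I(\rho)$ in $\mathcal{Z}(\mathcal{C})$, use the Galois action (Lemma \ref{gal}) to pair them and derive an \emph{upper} bound $\sum_{j\in J}b_j^2\le\frac12 k^2(n+1)(n-1)+2n$ (Lemma \ref{fortoo}); then compute $\mathrm{Tr}(\theta_{I(\rho)}^2)=\nu_2(\rho)\dim(\mathcal{C})$ with all relevant twists squaring to $1$ by Proposition \ref{theprop}, and apply the vanishing-sums-of-roots-of-unity estimate \cite[Theorem 6.2]{MR3229513} together with $\phi(2c)/\sqrt{c}\ge 2/\sqrt{3}$ to get a \emph{lower} bound on $\sum_{j\in J}b_j^2$ growing like $kn\sqrt{k^2n^2+4n}$. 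Comparing the two yields a quartic inequality $f(n,k)\ge 0$ with negative leading coefficient $-n^4-6n^2+3$ in $k$, which fails for all but a short explicit list of $(n,k)$, each then checked against the sharper constraint involving the actual square-free part $c$. Nothing in your sketch (``$2$-adic congruences,'' ``consistent modular datum'') substitutes for this quantitative argument, so as written the proposal establishes the easy directions but not the nonexistence half of the theorem.
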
 
\par There are two very different tasks to complete to prove Theorem \ref{theend} and similar classification results.  On one hand, for a fixed finite abelian group $G$, one must construct fusion categories which categorify $R(G,\ell)$ for a finite number of levels $k$.  Daisuke Tambara \& Shigeru Yamagami used a brute-force approach to demonstrate that $R(G,0)$ is categorifiable for all finite abelian groups $G$; these methods do not generalize in an obvious way to $\ell>0$.  There is much literature devoted to constructing categorifications of near-group fusion rings using operator algebras.  Key expositions on this topic are due to Masaki Izumi \cite{MR1782145,MR1832764,MR3635673}, and David Evans \& Terry Gannon \cite{MR2837122,MR3167494}.  One can find proof that $R(C_2,2)$ and $R(C_2^2,4)$ are categorifiable in \cite{MR1832764}; the fusion ring $R(C_2,1)$ is categorified by $\mathrm{Rep}(S_3)$, the category of finite-dimensional complex representations of the symmetric group $S_3$.  It has been conjectured that $R(C_\ell,\ell)$ is categorifiable for all $\ell\in\mathbb{Z}_{\geq1}$ \cite[Conjecture 1]{MR3167494}.   Together with \cite[Proposition 6]{MR3167494}, Theorem \ref{theend} proves that $G=C_2^3$ is the abelian group of smallest order such that $R(G,|G|)$ is not categorifiable.

\par On the other hand, one must also demonstrate that for a fixed finite abelian group $G$, there exists an upper bound on the levels $\ell\in\mathbb{Z}_{\geq0}$ such that $R(G,\ell)$ is categorifiable.  To do so, we study the \emph{double} (or Drinfeld center \cite[Section 7.13]{tcat}) of the hypothetical fusion categories.  These doubles are modular tensor categories \cite[Section 8.14]{tcat}, partially controlled by ($\dagger$) the representation theory of the fusion ring, and by ($\ddagger$) numerical constraints arising from the representation theory of the modular group $\mathrm{SL}(2,\mathbb{Z})$.  For this reason, the main body of this paper is devoted to developing a better understanding of ($\dagger$) and ($\ddagger$) for more general fusion rings whose basis elements have one of two distinct Frobenius-Perron dimensions.  We casually refer to these as \emph{two-dimension} fusion rings.

\par In Section \ref{sec:fus}, we introduce the basic notions in the study of fusion rings including dimensions and formal codegrees, and we prove elementary results about the structure of two-dimension fusion rings.  When the finite group of invertible basis elements acts transitively on the noninvertible basis elements, e.g.\ when the nontrivial dimension is irrational, the representation theory is somewhat restrictive.  We casually refer to these as \emph{two-orbit} fusion rings, which have previously been referred to as generalized near-group fusion rings in a categorical context (e.g.\  \cite{thornton2012generalized}).  In Section \ref{twoorbit}, we characterize the irreducible representations of these two-orbit fusion rings vanishing on the nontrivial basis elements (Proposition \ref{triv}).  When the fusion rules between the noninvertible basis elements are \emph{uniform}, in the sense of Section \ref{sec:uni}, the representation theory is completely determined (Proposition \ref{reps}) if the invertible elements commute and the noninvertible basis elements are self-dual.   A satisfying consequence of this characterization is that if $R$ is a fusion ring of this type whose invertible objects (forming a finite group $G$) act fixed-point free on the noninvertible basis elements, then the irreducible representations of $R$ correspond to the irreducible representations of $G\rtimes_\theta C_2$ for a distinguished $\theta\in\mathrm{Aut}(G)$.  This provides a generalization of the observed relationship \cite[Section 2]{MR2837122} between the cyclic \emph{Haagerup-Izumi} fusion rings and the dihedral groups $D_n$ for $n\in\mathbb{Z}_{\geq3}$ on a representation-theoretic level.

\par In Section \ref{sec:cat}, we introduce the basic notions in the study of fusion categories, most notably the induction functor $I:\mathcal{C}\to\mathcal{Z}(\mathcal{C})$ from a fusion category $\mathcal{C}$ to its double.  Using the induction functor along with the representation-theoretic results of Section \ref{sec:fus}, we prove several strong divisibility constraints for the categorification of two-dimension and two-orbit fusion rings, and a result on the Galois action of $\mathcal{Z}(\mathcal{C})$ for these categories when $\mathrm{FPdim}(\mathcal{C})$ is irrational.  In Section \ref{sec:twist}, we use Frobenius-Schur indicators \cite{MR2381536} to prove results bounding the twists of simple summands of $I(X)$ for invertible objects $X$ in arbitrary pivotal fusion categories.  Propositions \ref{thelemma}--\ref{theprop} strongly constrain the categorifications of fusion rings possessing many invertible elements, such as two-dimension fusion rings.  Fusion categories whose simple objects take only two integer Frobenius-Perron dimensions have been studied previously in \cite[Section 7]{MR2098028} and \cite{MR2968637}, for example.  The literature on near-group fusion categories is more robust, including \cite{budinski2021exotic,MR4167662,MR3167494,MR3635673,schopierayargentina,MR1997336,MR1659954,thornton2012generalized,MR4044867}.

\par Section \ref{sec:near} is the culmination of the above results.  We index the set of simple objects in the doubles of near-group fusion categories in Section \ref{sec:label}, and provide level bounds for categorifiable near-group fusion rings $R(C_2^m,\ell)$ for $m\in\mathbb{Z}_{\geq1}$ in Section \ref{sec:elem}, proving Theorem \ref{theend}.  As an epilogue, we use the same methods to provide strict constraints on levels for which $R(C_p,\ell)$ is categorifiable when $p\equiv3\pmod{4}$ is prime to inspire future research.  Even modest improvements to the number-theoretic results found in \cite[Section 6]{MR3229513} would complete the search for bounds in these cases, and presumably provide a characterization of when $R(C_p^m,\ell)$ is categorifiable for $m\in\mathbb{Z}_{\geq1}$ using identical techniques to the case $p=2$ found in Section \ref{sec:elem}.  We conjecture, and heuristic argument suggests, that for each finite abelian group $A$, there exists $M_A\in\mathbb{Z}_{\geq1}$ such that $R(A^m,\ell)$ is categorifiable if and only if $\ell=0$ for all $m\geq M_A$.




\section{Fusion rings}\label{sec:fus}

\par The following definitions can be found in a standard textbook such as \cite[Chapter 3]{tcat}.  A unital $\mathbb{Z}_{\geq0}$-ring $(R,B)$ is a pairing of an associative unital ring $R$ which is free as a $\mathbb{Z}$-module, and a distinguished basis $B=\{b_i:i\in I\}$ including the unit $b_0:=1_R$ such that for all $i,j\in I$, $b_ib_j=\sum_{k\in I}c_{i,j}^kb_k$ with $c_{i,j}^k\in\mathbb{Z}_{\geq0}$.  The cardinality of $B$ is referred to as the rank of the fusion ring and denoted $\mathrm{rank}(R)$.  The nonnegative integer structure constants $c_{i,j}^k$ for all $i,j,k\in I$ are the fusion rules of $(R,B)$.  Such a ring is based if there exists an involution of $I$, denoted $i\mapsto i^\ast$, such that the induced map defined by
\begin{equation}
a=\sum_{i\in I}a_ib_i\mapsto a^\ast=\sum_{i\in I}a_ib_{i^\ast},\,\,\, a_i\in\mathbb{Z}
\end{equation}
is an anti-involution of $R$, and $c_{i,j}^0=1$ if $i=j^\ast$ and $c_{i,j}^0=0$ otherwise.  For a fixed unital $\mathbb{Z}_{\geq0}$-ring, being based is a property; there is no choice in the involution should it exist.  We say that $x,y\in R$ are dual to one another when $x=y^\ast$ and $x$ is self-dual when $x=x^\ast$.

\begin{definition}
A \emph{fusion ring} is a unital based $\mathbb{Z}_{\geq0}$-ring $(R,B)$ of finite rank.
\end{definition}

\begin{example}\label{group}
Let $G$ be a finite group.  The integral group ring $R=\mathbb{Z}G$ with basis $B=\{b_g:g\in G\}$ is a fusion ring with fusion rules $b_gb_h:=b_{gh}$ for all $g,h\in G$, unit $b_e$, and duality $b_g^\ast=b_{g^{-1}}$ for all $g\in G$.  As is customary, we will use the group elements themselves to represent their corresponding basis elements from this point forward. 
\end{example}

\par Let $(R,B)$ be a fusion ring.  For each $x\in B$, let $N_x$ be the matrix of fusion coefficients $[c_{x,y}^z]_{y,z\in B}$.  These matrices satisfy a property known as transitivity: for all $x,x'\in B$, there exist $y,y'\in B$ such that $c_{x',y}^x$ and $c_{y',x'}^x$ are nonzero.  As a result of transitivity, the Frobenius-Perron theorem \cite[Proposition 3.2.1]{tcat} states that each $N_x$ has a real eigenvalue $\mathrm{FPdim}(x)$ such that $|\alpha|\leq\mathrm{FPdim}(x)$ for all other eigenvalues $\alpha$ of $N_x$ which we refer to as the Frobenius-Perron dimension of $x$.  Extending linearly from $B$ to $R$ yields a ring homomorphism $\mathrm{FPdim}:R\to\mathbb{R}$ which is the unique character of $R$ taking nonnegative values on $B$.  Furthermore, $\mathrm{FPdim}(x)\geq1$ for all $x\in B$ \cite[Proposition 3.3.4--3.3.6]{tcat}.

\par If $(R,B)$ is a fusion ring and $x\in R$ such that $\mathrm{FPdim}(x)=1$, we say that $x$ is \emph{invertible} since in this case $x\in B$ and $xx^\ast=x^\ast x=1_R$.  The $\mathbb{Z}$-linear span of invertible elements of $R$ forms a fusion subring $R_\mathrm{pt}\subset R$.  In particular the invertible elements of $R$ have the structure of a finite group $G_R$ under multiplication, which acts on $B$ by left (or right) multiplication.  We will use standard notation for group actions in what follows and unless otherwise stated, $G_R$ will act on $B$ by left multiplication.  For example, if $G:=G_R$, then $G\cdot x$ (or $x\cdot G$) is the orbit of $x\in B$ under the left (or right) $G$-action, $B/G$ is the set of $G$-orbits of $B$, $G_x\subset G$ is the stabilizer subgroup of $x\in B$, etc. 

\begin{example}
The Frobenius-Perron dimensions in fusion rings need not be integers.  Let $G$ be a finite group and $\ell\in\mathbb{Z}_{\geq0}$.  The near-group fusion ring $R(G,\ell)$ defined in Section \ref{sec:intro} has a unique noninvertible basis element $\rho$, which is necessarily fixed by all invertible objects under multiplication.  The cyclic symmetry of fusion rules \cite[Proposition 3.1.6]{tcat} implies $c^x_{\rho,\rho^\ast}=c_{x^\ast,\rho}^\rho=1$ for all invertible $x$, so the only unknown fusion coefficient is $\ell:=c_{\rho,\rho}^\rho$.  It follows that $\mathrm{FPdim}(\rho)$ is the maximal root of $x^2-\ell x-|G|$.  If $\mathrm{FPdim}(\rho)\in\mathbb{Z}$, then $\ell<|G|$.  Thus $\ell\geq|G|$ implies $\mathrm{FPdim}(\rho)$ is irrational; these facts are proven and generalized in Lemma \ref{lem1} below.
\end{example}

\par The sum $\mathrm{FPdim}(R):=\sum_{x\in B}\mathrm{FPdim}(x)^2$ for a fusion ring $(R,B)$ is one in a family of distinguishing and highly-structured numerical invariants called \emph{formal codegrees} \cite{codegrees}.  In particular, $R\otimes_\mathbb{Z}\mathbb{C}$ is a semisimple $\mathbb{C}$-algebra and therefore the set of irreducible finite-dimensional representations of $R\otimes_\mathbb{Z}\mathbb{C}$, denoted $\mathrm{Irr}(R)$, are as amenable to study as those of the group algebra $\mathbb{C}G$ for a finite group $G$ (refer to \cite[Section 2]{codegrees} and references therein).  For any $\psi\in\mathrm{Irr}(R)$, the distinguished element $\sum_{x\in B}\mathrm{Tr}(x,\psi)x^\ast$ lies in the center of $R\otimes_\mathbb{Z}\mathbb{C}$ and acts by a scalar $f_\psi\in\mathbb{C}$ on $\psi$.  The set of $f_\psi$ over all $\psi\in\mathrm{Irr}(R)$ are known as the formal codegrees of $R$ since the formal codegrees of $\mathbb{Z}G$ are the classical codegrees $|G|/\dim(\psi)$ for $\psi\in\mathrm{Irr}(G)$ of the finite group $G$.  In general, $f_\psi$ and all of its Galois conjugates are greater than or equal to $\dim(\psi)$ for any $\psi\in\mathrm{Irr}(R)$ \cite[Remark 2.12]{ost15}; we offer a different lower bound in the presence of nontrivial invertible elements.

\begin{lemma}\label{bound}
Let $(R,B)$ be a fusion ring.  Then for any $\psi\in\mathrm{Irr}(R)$, $\dim(\psi)f_\psi=|G_R|+\alpha$
for some $\alpha\geq0$.  In particular, $f_\psi\geq|G_R|/\dim(\psi)$.
\end{lemma}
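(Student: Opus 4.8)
The plan is to evaluate the trace of the central element defining $f_\psi$ and then bound it from below using ordinary character theory of the finite group $G_R$. By construction the element $z_\psi:=\sum_{x\in B}\mathrm{Tr}(x,\psi)\,x^\ast$ lies in the center of $R\otimes_\mathbb{Z}\mathbb{C}$ and acts on the representation space of $\psi$ as the scalar $f_\psi$. Taking the trace of this action gives $\dim(\psi)f_\psi=\sum_{x\in B}\mathrm{Tr}(x,\psi)\,\mathrm{Tr}(x^\ast,\psi)$. The first step is therefore to identify $\mathrm{Tr}(x^\ast,\psi)$ with $\overline{\mathrm{Tr}(x,\psi)}$: extending the duality anti-involution conjugate-linearly turns $R\otimes_\mathbb{Z}\mathbb{C}$ into a finite-dimensional $\ast$-algebra whose canonical form (the one making $B$ orthonormal) is positive definite, so every $\psi\in\mathrm{Irr}(R)$ is unitarizable and may be taken to satisfy $\psi(x^\ast)=\psi(x)^\dagger$. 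With this normalization $\mathrm{Tr}(x^\ast,\psi)=\overline{\mathrm{Tr}(x,\psi)}$, and hence $\dim(\psi)f_\psi=\sum_{x\in B}|\mathrm{Tr}(x,\psi)|^2$.

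Second, I would split the index set $B$ into the invertible elements, which form the group $G_R$, and their complement $B\setminus G_R$, writing $\dim(\psi)f_\psi=\sum_{g\in G_R}|\mathrm{Tr}(g,\psi)|^2+\sum_{x\in B\setminus G_R}|\mathrm{Tr}(x,\psi)|^2$. The second sum is manifestly nonnegative. For the first, observe that restricting $\psi$ along the inclusion $\mathbb{C}G_R\hookrightarrow R\otimes_\mathbb{Z}\mathbb{C}$ yields an honest complex representation of the finite group $G_R$ with character $\chi(g)=\mathrm{Tr}(g,\psi)$. The orthogonality relations for $G_R$ then give $\sum_{g\in G_R}|\chi(g)|^2=|G_R|\,\langle\chi,\chi\rangle=|G_R|\sum_\rho m_\rho^2$, where the $m_\rho\in\mathbb{Z}_{\geq0}$ are the multiplicities of the irreducibles $\rho$ of $G_R$ in $\psi|_{G_R}$. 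Since $\chi(1)=\dim(\psi)\geq1$ the restriction is nonzero, so $\sum_\rho m_\rho^2\geq1$ and the first sum is at least $|G_R|$. Defining $\alpha:=\dim(\psi)f_\psi-|G_R|$, the two observations combine to show $\alpha\geq0$, establishing $\dim(\psi)f_\psi=|G_R|+\alpha$; dividing by $\dim(\psi)>0$ then yields $f_\psi\geq|G_R|/\dim(\psi)$.

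The step I expect to require the most care is the reduction $\mathrm{Tr}(x^\ast,\psi)=\overline{\mathrm{Tr}(x,\psi)}$, that is, the unitarizability of the irreducible representations of $R\otimes_\mathbb{Z}\mathbb{C}$. This rests on the positivity of the canonical trace form of a based ring, which guarantees that the $\ast$-algebra is semisimple with a positive involution; I would either cite this directly from the formal-codegree literature (\cite{codegrees,ost15}) or verify positivity on the orthonormal basis $B$ by hand. Everything after that point is elementary: the trace computation is immediate, and the lower bound on the $G_R$-part is nothing more than the first orthogonality relation, the essential input being that $\sum_\rho m_\rho^2$ is a positive integer. As a sanity check, retaining only the identity term $|\mathrm{Tr}(1,\psi)|^2=\dim(\psi)^2$ in the full sum recovers the known bound $f_\psi\geq\dim(\psi)$ of \cite[Remark 2.12]{ost15}, so the present argument is precisely the refinement obtained by keeping the entire contribution of the invertibles.
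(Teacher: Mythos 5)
Your proof is correct, but it takes a genuinely different route from the paper's. The paper works entirely in the left regular representation: it observes that the eigenvalues of multiplication by the central element $\sum_{x\in B}xx^\ast=|G_R|1_R+\sum_{x\in B\setminus G_R}xx^\ast$ are precisely the numbers $\dim(\psi)f_\psi$ (citing \cite[Lemma 2.6]{codegrees}), and then writes this multiplication operator as $|G_R|$ times the identity plus a sum of positive semidefinite operators of multiplication by $xx^\ast$, so every eigenvalue is $|G_R|+\alpha$ with $\alpha\geq0$. You instead evaluate the trace of the defining central element $\sum_{x\in B}\mathrm{Tr}(x,\psi)x^\ast$ in the representation $\psi$ itself, obtaining $\dim(\psi)f_\psi=\sum_{x\in B}|\mathrm{Tr}(x,\psi)|^2$, and then bound the group part from below by the first orthogonality relation for $G_R$. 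The two arguments rest on the same underlying positivity input (the canonical form making $B$ orthonormal turns $R\otimes_\mathbb{Z}\mathbb{C}$ into a finite-dimensional $\ast$-algebra with positive involution, which is what makes multiplication by $xx^\ast$ positive semidefinite in the paper and makes $\psi$ unitarizable in your version), and the identity $\dim(\psi)f_\psi=\sum_{x}|\mathrm{Tr}(x,\psi)|^2$ you need is essentially the content of \cite[Lemma 2.3]{codegrees}, which the author invokes later in the proof of Proposition \ref{triv}, so your key citation is already in the paper's toolkit. What your version buys is a sharper and more explicit estimate, namely $\dim(\psi)f_\psi\geq|G_R|\langle\chi,\chi\rangle+\sum_{x\in B\setminus G_R}|\mathrm{Tr}(x,\psi)|^2$ with $\chi=\psi|_{G_R}$, together with an equality analysis for free; what the paper's version buys is brevity, since no unitarization of $\psi$ is needed and the positivity of the multiplication operators can be taken as known.
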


\begin{proof}
\par We compute $\sum_{x\in B}xx^\ast=|G_R|1_R+\sum_{x\in B\setminus G_R}xx^\ast$.  The eigenvalues of the matrix $M$ of multiplication by this element are $\dim(\psi)f_\psi$ where $f_\psi$ is the formal codegree corresponding to $\psi\in\mathrm{Irr}(R)$  \cite[Lemma 2.6]{codegrees}.  With $I$ the appropriately-sized identity matrix, $M=|G_R|\mathrm{I}+N$ where $N$ is a sum of the positive semidefinite matrices of multiplication by $xx^\ast$ over all $x\in B\setminus G_R$, which is positive semidefinite.  As $I$ and $N$ commute, the eigenvalues of $M$ are  of the form $|G_R|+\alpha$ where $\alpha\geq0$ is an eigenvalue of $N$.
\end{proof}

\par As previously noted, if $\mathrm{FPdim}(x)=\mathrm{FPdim}(y)$ for all $x,y\in B$, then $R=\mathbb{Z}G_R$.  The next level of complexity would be the existence of $d\in\mathbb{R}_{>1}$ such that for all $x\in B$, $\mathrm{FPdim}(x)\in\{1,d\}$.  We casually refer to these as \emph{two-dimension} fusion rings.  As a consequence of applying the ring homomorphism $\mathrm{FPdim}$ to the decomposition of $xx^\ast$ for some $x\in B$ with $d:=\mathrm{FPdim}(x)$, there exist $r\in\mathbb{Z}_{\geq0}$ and $s\in\mathbb{Z}_{\geq1}$ such that
\begin{equation}
d^2-rd-s=0.\label{eq:1}
\end{equation}
In what follows, once a two-dimension fusion ring has been specified, we may refer to the positive root of Equation (\ref{eq:1}), the defining relation of the nontrivial Frobenius-Perron dimension, as $d=d_+$ for brevity, and less commonly, the negative root as $d_-$.   The fusion rules when $d$ is rational or irrational have many differences.

\begin{lemma}\label{lem1}
Let $(R,B)$ be a fusion ring and $d\in\mathbb{R}_{>1}$ such that for all $x\in B$, $\mathrm{FPdim}(x)\in\{1,d\}$.  If $d\in\mathbb{Z}$, then $r<s$, hence $r\geq s$ implies $d$ is irrational.
\end{lemma}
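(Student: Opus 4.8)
The plan is to work directly from the defining quadratic (\ref{eq:1}), $d^2 - rd - s = 0$, rearranged as $s = d^2 - rd = d(d-r)$, and to exploit the fact that the constant term satisfies $s \geq 1$. I first recall why $s \geq 1$: for any $x \in B$ with $\mathrm{FPdim}(x) = d$, applying the ring homomorphism $\mathrm{FPdim}$ to the decomposition $xx^\ast = 1_R + \cdots$ is exactly what produces (\ref{eq:1}), with $s$ the sum of the coefficients of the invertible basis elements and $r$ the sum of the coefficients of the dimension-$d$ basis elements appearing in $xx^\ast$; since $c_{x,x^\ast}^0 = 1$, the unit term alone forces $s \geq 1$.

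Assuming $d \in \mathbb{Z}$, I would then argue with the factorization $s = d(d-r)$. Because $d > 1 > 0$ and $s \geq 1 > 0$, the integer factor $d - r$ is positive, hence $d - r \geq 1$, i.e.\ $r \leq d - 1$. Substituting back gives $s = d(d-r) \geq d \geq r + 1 > r$, which is the required strict inequality $r < s$.

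For the final clause I would pass to the contrapositive. The number $d$ is a root of the monic integer polynomial $x^2 - rx - s$, so it is an algebraic integer, and a rational algebraic integer is an ordinary integer; thus $d \notin \mathbb{Z}$ is equivalent to $d$ being irrational. The contrapositive of the statement just proved, $d \in \mathbb{Z} \Rightarrow r < s$, is then precisely $r \geq s \Rightarrow d \notin \mathbb{Z}$, i.e.\ $d$ irrational. I anticipate no real obstacle here; the only steps meriting care are the bookkeeping that establishes $s \geq 1$ from the unit term in $xx^\ast$, and phrasing the integer estimate so that it yields the chain $s \geq d > r$ rather than the weaker $s \geq r$.
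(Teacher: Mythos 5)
Your proof is correct and follows essentially the same route as the paper: both arguments rest on the factorization $s = d(d-r)$ and the observation that when $d\in\mathbb{Z}$ the positive integer $d-r$ is at least $1$, giving $r\leq d-1 < d\leq s$. Your extra remarks — that $s\geq 1$ comes from the unit term $c_{x,x^\ast}^0=1$, and that $d$ is an algebraic integer so that $d\notin\mathbb{Z}$ is equivalent to irrationality — are details the paper leaves implicit, and they are handled correctly.
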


\begin{proof}
Equation (\ref{eq:1}) states that $s=d(d-r)$.  In particular $r<d$ and $d\leq s$.  If $d\in\mathbb{Z}$, then $r=d-(d-r)<s$.
\end{proof}

\begin{lemma}\label{transitive}
Let $(R,B)$ be a fusion ring with $d\in\mathbb{R}_{>1}$ such that for all $x\in B$, $\mathrm{FPdim}(x)\in\{1,d\}$.  If $|B/G_R|>2$, then $d\in\mathbb{Z}$.
\end{lemma}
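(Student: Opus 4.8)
The plan is to argue the contrapositive: assuming $d\notin\mathbb{Z}$, I will show that $G_R$ acts on $B$ with at most two orbits. Since $d$ is a root of the monic integer polynomial of Equation~(\ref{eq:1}), it is an algebraic integer, so $d\notin\mathbb{Z}$ is equivalent to $d$ being irrational. Writing $G:=G_R$, one orbit of the $G$-action is always $G$ itself (the orbit of $1_R$, consisting of all invertibles), so it suffices to prove that $G$ acts transitively on the set of noninvertible basis elements, i.e.\ those of Frobenius-Perron dimension $d$; this yields $|B/G|\leq 2$.

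The key structural input I would establish first is that left multiplication by an invertible $g$ permutes $B$: the matrix $N_g$ has nonnegative integer entries and $N_gN_{g^{-1}}=N_{1_R}=\mathrm{I}$, which forces $N_g$ to be a permutation matrix. Consequently, for any noninvertible $\rho,\tau$ and any invertible $g$, the product $g\tau$ is a single basis element, and cyclic symmetry of the fusion rules \cite[Proposition 3.1.6]{tcat} identifies the coefficient of $g$ in $\rho\tau^\ast$ with the coefficient of $\tau$ in $g^\ast\rho$, which equals $1$ if $g^\ast\rho=\tau$ (equivalently $\rho=g\tau$) and $0$ otherwise. Thus $\rho\tau^\ast$ contains an invertible summand if and only if $\rho$ and $\tau$ lie in the same $G$-orbit.

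With this dictionary in hand, the conclusion is a one-line dimension count. I would suppose toward a contradiction that $\rho$ and $\tau$ are noninvertible basis elements in distinct $G$-orbits. By the previous paragraph, $\rho\tau^\ast=\sum_z B_z\,z$ is then a nonnegative integer combination of \emph{noninvertible} basis elements $z$ only. Applying the ring homomorphism $\mathrm{FPdim}$ and using multiplicativity gives $d^2=\mathrm{FPdim}(\rho)\,\mathrm{FPdim}(\tau^\ast)=\big(\sum_z B_z\big)\,d$, whence $d=\sum_z B_z\in\mathbb{Z}$, contradicting irrationality. Therefore any two noninvertible basis elements share a $G$-orbit, giving a single noninvertible orbit and hence $|B/G|\leq 2$.

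I expect the only genuinely delicate step to be the second paragraph: correctly bookkeeping which summands of $\rho\tau^\ast$ are invertible and translating this into the orbit relation via cyclic symmetry and the permutation action of $G$. Once that correspondence is in place, the arithmetic obstruction — that an integer multiple of the irrational number $d$ cannot equal $d^2$ — closes the argument immediately, so that no appeal to the Galois conjugate $d_-$ or to formal codegrees is required.
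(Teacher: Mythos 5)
Your proof is correct and follows essentially the same route as the paper's: both use the cyclic symmetry of the fusion rules to show that two noninvertible basis elements in distinct $G_R$-orbits have a product with no invertible summands, and then apply $\mathrm{FPdim}$ to get $d^2=rd$ with $r\in\mathbb{Z}$, forcing $d\in\mathbb{Z}$. The only cosmetic differences are that you phrase the argument as a contrapositive/contradiction and spell out explicitly that invertibles act by permutation matrices, which the paper leaves implicit.
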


\begin{proof}
The argument follows from the cyclic symmetry of the fusion rules \cite[Proposition 3.1.6]{tcat} and is consequently left-right symmetric.  Let $x,y\in B\setminus G_R$.  If $y\not\in G\cdot x$,  then for all $g\in G_R$,
\begin{equation}\label{fooor}
c_{y,x^\ast}^g=c_{g^\ast,y}^x=0.
\end{equation}
In other words, no invertible elements appear as summands of $yx^\ast$.  Therefore $\mathrm{FPdim}(yx^\ast)=d^2=rd$ for some $r\in\mathbb{Z}$ and moreover $d=r\in\mathbb{Z}$.
\end{proof}

Although elementary, the following examples demonstrate that Lemma \ref{transitive} is quite nontrivial, in the sense that when $d\in\mathbb{Z}$, $B$ can have any number of $G_R$-orbits.

\begin{example}\label{extra}\,
\begin{itemize}
\item[(a)] Let $n\in\mathbb{Z}_{\geq3}$ and $D_n$ be the dihedral group of order $2n$.  When $n$ is odd, $D_n$ has two irreducible characters of degree $1$ and $(n-1)/2$ of degree $2$, and when $n$ is even, there are four irreducible characters of degree $1$ and $(n-2)/2$ of degree $2$.  Hence if $(R,B)$ is the character ring of $D_n$ for $n\geq10$, $|G_R|<|B\setminus G_R|$, antithesis to when the nontrivial Frobenius-Perron dimension is irrational.
\item[(b)] Let $p\in\mathbb{Z}_{\geq2}$ be prime and $E$ an extraspecial $p$-group of order $p^{2n+1}$ for some $n\in\mathbb{Z}_{\geq1}$, i.e.\ $|Z(E)|=p$ and $E/Z(E)$ is an elementary abelian $p$-group of order $p^{2n}$.  For each $n\in\mathbb{Z}_{\geq1}$ there are exactly 2 isomorphism classes of extraspecial $p$-groups of order $p^{2n+1}$, and none whose order has an even exponent.  The irreducible characters of $E$ have two forms: $p^{2n}$ of degree $1$, and $p-1$ of degree $p^n$.  The characters of degree $p^n$ are fixed by the action of the linear characters by left (or right) multiplication, giving $p-1$ orbits under this action.  The \emph{semi-extraspecial} groups \cite{MR473004} provide more examples of finite groups whose irreducible characters have exactly two distinct degrees.  One can find an abstract study of finite groups with two distinct character degrees in \cite[Chapter 12]{MR2270898}.  As far as we can tell, at this time there is no complete alternative characterization of such finite groups. 
\end{itemize}
\end{example}

\par The following lemma is a collection of basic properties of two-dimension fusion rings $(R,B)$ while assuming $|B/ G_R|=2$ where $G_R$ is acting on $B$ by left multiplication.  For example, this is true when $\mathrm{FPdim}(x)\not\in\mathbb{Z}$ by Lemma \ref{transitive}.

\begin{lemma}\label{bigonea}
Let $(R,B)$ be a fusion ring with $G:=G_R$ and $|B/G_R|=2$.  Then for all $x,y\in B\setminus G$,
\begin{enumerate}
\item[(a)] if $g\in G$ such that $gx=x^\ast$, then $gx=xg$;
\item[(b)] $x^\ast\in x\cdot G$;
\item[(c)] $G\cdot x=x\cdot G$;
\item[(d)] if $y\in G\cdot x$, $xx^\ast=yy^\ast$ and $G_x=G_y$; 
\item[(e)] $G_x\trianglelefteq G$.
\end{enumerate}
\end{lemma}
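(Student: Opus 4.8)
The hypothesis $|B/G_R|=2$ says precisely that the left $G$-action on $B$ has the two orbits $G=G\cdot 1$ and $\Omega:=B\setminus G$; in particular $G$ acts transitively on $\Omega$ on the left. The plan is to first upgrade this to transitivity on the \emph{right}, after which parts (b)--(e) become short manipulations. The only inputs I would use are that each $g\in G$ is invertible with $g^\ast=g^{-1}$, that $\ast$ is an anti-automorphism of $R$ preserving $\mathrm{FPdim}$ (so it fixes $G$ setwise and maps $\Omega$ to $\Omega$), and that left or right multiplication by $g\in G$ permutes $B$. For right transitivity, given $x,y\in\Omega$ I would apply left transitivity to the noninvertible element $y^\ast$ to write $y^\ast=g\,x^\ast$ for some $g\in G$, and then dualize: $y=(y^\ast)^\ast=(g x^\ast)^\ast=x\,g^{-1}\in x\cdot G$. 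Hence $x\cdot G=\Omega=G\cdot x$, which is (c); and since $x^\ast\in\Omega=x\cdot G$ this immediately gives (b).

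For (a), suppose $g\in G$ satisfies $gx=x^\ast$. Applying $\ast$ and using that it reverses products gives $x=(x^\ast)^\ast=(gx)^\ast=x^\ast g^{-1}$; substituting $x^\ast=gx$ into the right-hand side yields $x=g x g^{-1}$, i.e.\ $gx=xg$. Notably this step uses only the anti-automorphism property together with $g^\ast=g^{-1}$, and needs no orbit hypothesis.

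The remaining parts rest on cancellation by invertible elements. For (d), if $y\in G\cdot x$ then by (c) also $y\in x\cdot G$, so I may write $y=xk$ with $k\in G$; then $yy^\ast=(xk)(xk)^\ast=x(kk^\ast)x^\ast=xx^\ast$ since $kk^\ast=1$, and $g\in G_y\iff (gx)k=xk\iff gx=x\iff g\in G_x$ after cancelling the invertible $k$ on the right, giving $G_y=G_x$. Finally (e) follows by comparing two descriptions of the stabilizer of $hx$ for arbitrary $h\in G$: the standard conjugation identity gives $G_{hx}=hG_xh^{-1}$, while applying (d) with $y=hx\in G\cdot x$ gives $G_{hx}=G_x$; together these force $hG_xh^{-1}=G_x$ for all $h$, i.e.\ $G_x\trianglelefteq G$. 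The one genuinely nonformal step is the passage to right transitivity; I expect it to be the crux, since everything downstream is cancellation and duality once both one-sided actions are transitive. The only point to treat with care is justifying that multiplication by $g\in G$ sends basis elements to basis elements, so that ``$gx=x$'' and the orbit language are literally statements about $B$; this follows since $N_gN_{g^{-1}}=N_1=\mathrm{I}$ with $N_g,N_{g^{-1}}$ nonnegative integer matrices forces $N_g$ to be a permutation matrix.
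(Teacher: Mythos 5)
Your proof is correct, and for parts (a)--(c) and the first half of (d) it follows essentially the same path as the paper: dualize a left-orbit relation to get a right-orbit relation, then cancel invertible elements. Where you genuinely diverge is in the stabilizer claim of (d) and in (e). For $G_x=G_y$ the paper first invokes the cyclic symmetry of the fusion rules to show $g\in G_x$ if and only if $c_{x,x^\ast}^{g}=1$, i.e.\ it identifies the invertible part of $xx^\ast$ as $\sum_{h\in G_x}h$, and then deduces normality in (e) by comparing the invertible summands of $xx^\ast=gx(gx)^\ast=g\,xx^\ast g^{-1}$. You instead get $G_x=G_y$ by writing $y=xk$ and cancelling $k$ on the right, and you get normality from the standard conjugation identity $G_{hx}=hG_xh^{-1}$ combined with (d); both steps are valid and strictly more elementary, using only that left and right multiplication by invertibles permute $B$ (which you correctly justify via $N_gN_{g^{-1}}=\mathrm{I}$ forcing $N_g$ to be a permutation matrix). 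The trade-off is that the paper's detour through cyclic symmetry is not wasted effort: the identification of the invertible summands of $xx^\ast$ with $\sum_{h\in G_x}h$ is reused later (it is exactly what makes $H_R$ appear in Proposition \ref{triv} and in the definition of uniform fusion rings), whereas your argument, while cleaner for the lemma in isolation, does not establish that auxiliary fact.
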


\begin{proof}
If $gx=x^\ast$, then $x=g^{-1}x^\ast=(xg)^\ast$, hence $x^\ast=xg$, proving Parts (a) and (b).  If $y\in G\cdot x$, then there exists $g\in G$ such that $gx=y$, hence $x^\ast g^{-1}=y^\ast$.  By Part (b) there exist $h_x,h_y\in G$ such that $x^\ast=xh_x$ and $y^\ast=yh_y$, hence $x(h_xg^{-1}h_y^{-1})=y$, proving $G\cdot x\subseteq x\cdot G$; an identical argument proves the reverse inclusion, which proves Part (c).  If $y\in G\cdot x$, then by Part (c), $gx^\ast=y^\ast$ for some $g\in G$.  Hence
\begin{equation}\label{five}
yy^\ast=(y^\ast)^\ast y^\ast=(gx^\ast)^\ast gx^\ast=xg^{-1} gx^\ast=xx^\ast.
\end{equation}
The cyclic invariance of the fusion rules \cite[Proposition 3.1.6]{tcat} implies that for all $x\in B\setminus G$ and $g\in G$, $g^{-1}\in G_x$ if and only if $1=c^x_{g^\ast,x}=c_{x,x^\ast}^g$.  As $G_x$ is a subgroup this occurs if and only if $g\in G_x$.  Now using the above equality $xg^{-1}=y$, if $h\in G_x$, then $hy=hxg^{-1}=xg^{-1}=y$.  Thus $h\in G_y$ and since $x,y$ were arbitrary, $G_x=G_y$, completing the proof of Part (d).  Lastly, Part (d) implies for all $x\in B\setminus G$ and $g\in G$, $xx^\ast=gx(gx)^\ast=gxx^\ast g^{-1}$, thus by comparing the invertible summands, $\sum_{h\in G_x}h=\sum_{h\in G_x}ghg^{-1}$.  Thus $G_x$ is normal in $G$ and Part (e) is proven.
\end{proof}



\subsection{Representations of two-orbit fusion rings}\label{twoorbit}

If $(R,B)$ is a fusion ring and there exists $d\in\mathbb{R}\setminus\mathbb{Z}$ such that for all $x\in B$, $\mathrm{FPdim}(x)\in\{1,d\}$, the contrapositive of Lemma \ref{transitive} implies $|B/G_R|=2$.  This two-orbit property holds in many cases when $d\in\mathbb{Z}$ as well, and in either case, these \emph{two-orbit} fusion rings have abundant structure.  Since $G_x=G_y$ for all $x,y\in B\setminus G$ by Lemma \ref{bigonea}(d), we may label this distinguished normal subgroup by $H_R$.

\par For any finite group $G$ and subgroup $H\subset G$, let $\mathrm{Irr}_H(G)\subset\mathrm{Irr}(G)$ be the set of isomorphism classes of irreducible representations $\psi$ of $G$ such that $\psi|_H$ is a sum of nontrivial irreducible representations of $H$.  Let $1_K$ be the trivial representation of any subgroup $K\subset G$.  Frobenius reciprocity states that for any $\psi\in\mathrm{Irr}(G)$,
\begin{equation}
\mathrm{Hom}_{\mathrm{Rep}(H)}\left(1_H,\mathrm{Res}_H^G\psi\right)=\mathrm{Hom}_{\mathrm{Rep}(G)}\left(\mathrm{Ind}_H^G1_H,\psi\right)
\end{equation}
where $\mathrm{Res}_H^G$ is the restriction functor $\mathrm{Rep}(G)\to\mathrm{Rep}(H)$ and $\mathrm{Ind}_H^G$ its adjoint.  The representation $\mathrm{Ind}_H^G1_H$ is the permutation representation of $G$ acting on the cosets $G/H$.  It is productive to consider $\mathrm{Irr}_H(G)$ as the set of irreducible representations of $G$ which are \emph{not} summands of $\mathrm{Ind}_H^G1_H$.  The two extreme cases are: $\mathrm{Ind}_H^G1_H$ is the regular representation of $G$ when $H$ is trivial, hence $\mathrm{Irr}_H(G)$ is empty, and $\mathrm{Ind}_H^G1_H=1_G$ when $H=G$, hence $\mathrm{Irr}_H(G)=\mathrm{Irr}(G)\setminus\{1_G\}$.  Recall that $\dim(\mathrm{Ind}_H^G1_H)=[G:H]$.
\par Denote by $\mathrm{Irr}_0(R)\subset\mathrm{Irr}(R)$ the subset of the isomorphism classes of irreducible representations $\psi$ of $R$ such that $\psi(x)=0$ for all $x\in B\setminus G_R$.

\begin{proposition}\label{triv}
Let $(R,B)$ be a fusion ring with $|B/G_R|=2$.  There is a bijection between $\mathrm{Irr}_{H_R}(G_R)$ and $\mathrm{Irr}_0(R)$.  If $\psi\in\mathrm{Irr}_0(R)$, then $f_\psi=|G_R|/\dim(\psi)$.
\end{proposition}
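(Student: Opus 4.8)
The plan is to realize the bijection as restriction to the group algebra $\mathbb{C}G_R\subseteq R\otimes_\mathbb{Z}\mathbb{C}$ in one direction and extension-by-zero in the other. The key starting observation is that if $\phi\in\mathrm{Irr}_0(R)$ acts on a space $W$, then every element of $B\setminus G_R$ acts as $0$, so the action of $R\otimes_\mathbb{Z}\mathbb{C}$ on $W$ is completely determined by that of $\mathbb{C}G_R$; consequently a subspace of $W$ is $R$-invariant if and only if it is $G_R$-invariant, and $\phi|_{\mathbb{C}G_R}$ is an irreducible representation of $G_R$, say $\psi\in\mathrm{Irr}(G_R)$, from which $\phi$ is recovered by letting $B\setminus G_R$ act as $0$. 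First I would check that the restriction map $\phi\mapsto\psi$ lands in $\mathrm{Irr}_{H_R}(G_R)$. For $x\in B\setminus G_R$ the cyclic symmetry of the fusion rules \cite[Proposition 3.1.6]{tcat} gives $c_{x,x^\ast}^g=c_{g,x}^x$, so the invertible part of $xx^\ast$ is exactly $\sum_{h\in H_R}h$, the stabilizer being $H_R$ by Lemma \ref{bigonea}(d). Applying $\phi$ to $xx^\ast$ and using $\phi(x)=0$ yields $\sum_{h\in H_R}\psi(h)=0$; since this operator is $|H_R|$ times the projection onto the $H_R$-fixed vectors, $\psi|_{H_R}$ has no trivial summand, i.e.\ $\psi\in\mathrm{Irr}_{H_R}(G_R)$.

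The main work is the converse. Given $\psi\in\mathrm{Irr}_{H_R}(G_R)$ on a space $V$, the extension-by-zero $\tilde\psi$, determined by $\tilde\psi|_{G_R}=\psi$ and $\tilde\psi(x)=0$ for $x\in B\setminus G_R$, must be shown to be a genuine algebra homomorphism $R\otimes_\mathbb{Z}\mathbb{C}\to\mathrm{End}(V)$. Multiplicativity is immediate unless both factors are noninvertible: since $G_R\cdot(B\setminus G_R)=B\setminus G_R=(B\setminus G_R)\cdot G_R$ by Lemma \ref{bigonea}(c), when exactly one of $a,b$ lies in $B\setminus G_R$ we have $\tilde\psi(ab)=0=\tilde\psi(a)\tilde\psi(b)$. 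The crux is the case $a,b\in B\setminus G_R$, where I must verify that the invertible part of $ab$ is annihilated by $\psi$. Here I would use $c_{a,b}^g=c_{a^\ast,g}^b$ to identify $c_{a,b}^g$ with the indicator of $a^\ast g=b$; dualizing $a^\ast g=a^\ast$ to $g^{-1}a=a$ shows the right stabilizer of $a^\ast$ is again $H_R$, so the set of such $g$ is a left coset $H_Rg_0$ and the invertible part of $ab$ equals $\bigl(\sum_{h\in H_R}h\bigr)g_0$. Applying $\psi$ and invoking $\sum_{h\in H_R}\psi(h)=0$ gives $\tilde\psi(ab)=0$, as required. This is the step I expect to be the main obstacle, since it is precisely where the defining property of $\mathrm{Irr}_{H_R}(G_R)$ enters essentially. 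Irreducibility of $\tilde\psi$ then follows as above (the $R$- and $\mathbb{C}G_R$-submodules coincide), and $\psi\mapsto\tilde\psi$ is inverse to restriction, completing the bijection.

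Finally, for the formal codegree I would compute the central element $C_\psi=\sum_{x\in B}\mathrm{Tr}(x,\psi)\,x^\ast$ directly. Because $\mathrm{Tr}(x,\psi)=0$ for $x\in B\setminus G_R$, it collapses to $C_\psi=\sum_{g\in G_R}\chi(g)\,g^{-1}$, where $\chi$ is the character of $\psi$ viewed as a $G_R$-representation. This element lies in $\mathbb{C}G_R$ and is a combination of class functions, hence, after reindexing $g\mapsto g^{-1}$, equals $|G_R|/\dim(\psi)$ times the primitive central idempotent associated to $\chi$; it therefore acts on the irreducible $G_R$-module $\psi$ by the scalar $|G_R|/\dim(\psi)$. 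Since the $R$-action is determined by the $\mathbb{C}G_R$-action, this scalar is exactly $f_\psi$, proving $f_\psi=|G_R|/\dim(\psi)$ and matching the lower bound of Lemma \ref{bound} with equality.
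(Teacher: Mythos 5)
Your proposal is correct and follows essentially the same route as the paper: restriction to $\mathbb{C}G_R$ and extension-by-zero, with the condition $\sum_{h\in H_R}\psi(h)=0$ characterizing exactly $\mathrm{Irr}_{H_R}(G_R)$. You in fact supply more detail than the paper at the one nontrivial point — verifying multiplicativity of $\tilde\psi$ on a product of two noninvertible basis elements by showing the invertible part of $ab$ is a coset $\bigl(\sum_{h\in H_R}h\bigr)g_0$ (the paper only checks the necessary condition on $xx^\ast$ and cites \cite[Lemma 2.3]{codegrees} for the codegree, where you compute the central idempotent directly); both fills are sound.
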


\begin{proof}
Set $G:=G_R$, $H:=H_R$, and let $\psi:G\to\mathrm{End}(V)$ be a finite-dimensional irreducible representation.  Define $\tilde{\psi}:R\to\mathrm{End}(V)$ extended from $\psi$ such that $\tilde{\psi}(x):=0$ for all $x\in B\setminus G$.  If $x\in B\setminus G$, then $\tilde{\psi}(x)\tilde{\psi}(x^\ast)=0$.  And so $\tilde{\psi}\in\mathrm{Irr}_0(R)$ only if
\begin{equation}\label{eleven}
0=\tilde{\psi}(xx^\ast)=\sum_{g\in H}\psi(g).
\end{equation}
Among $\psi\in\mathrm{Irr}(G)$, condition (\ref{eleven}) is true if and only if $\psi\in\mathrm{Irr}_{H}(G)$.  Therefore for each $\psi\in\mathrm{Irr}_H(G)$, there exists a unique $\tilde{\psi}\in\mathrm{Irr}_0(R)$ such that $\tilde{\psi}|_G=\psi$.  Conversely, if $\psi\in\mathrm{Irr}_0(R)$, then any nontrivial decomposition of $\psi|_G$ into a sum of irreducible representations of $G$ is likewise a nontrivial decomposition of $\psi$ into a sum of irreducible representations of $R$.  Therefore $\psi|_G\in\mathrm{Irr}_H(G)$ for all $\psi\in\mathrm{Irr}_0(R)$ as (\ref{eleven}) must be satisfied.  Noting that $\psi|_{B\setminus G}=0$, one computes $f_\psi=|G|/\dim(\psi)$ from \cite[Lemma 2.3]{codegrees}.
\end{proof}

\begin{corollary}\label{corcom}
Let $(R,B)$ be a fusion ring with $|B/G_R|=2$ and $[G_R:H_R]=2$.  Then $R$ is commutative if and only if $G_R$ is abelian.
\end{corollary}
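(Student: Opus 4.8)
The plan is to prove the two implications separately, with the forward direction being immediate and the reverse direction containing all the work. For the forward direction, if $R$ is commutative then so is its subring $R_{\mathrm{pt}}=\mathbb{Z}G_R$, and since $G_R$ sits inside $R_{\mathrm{pt}}$ as a $\mathbb{Z}$-basis this forces $G_R$ to be abelian. The substance is the converse, so assume $G:=G_R$ is abelian and aim to show that every pair of basis elements commutes. Products of two invertibles lie in $\mathbb{Z}G$ and commute by hypothesis, so it remains to treat (i) an invertible $g$ against a noninvertible $x$, and (ii) two noninvertibles. The hypotheses $|B/G_R|=2$ and $[G_R:H_R]=2$ say that $B\setminus G$ is a single $G$-orbit whose common, normal stabilizer $H:=H_R$ (Lemma \ref{bigonea}(d),(e)) has index $2$; hence $B\setminus G=\{u,v\}$ has exactly two elements, acted on by $G$ both on the left and on the right.

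The crux is step (i). Left multiplication defines a homomorphism $\lambda:G\to\mathrm{Sym}(B\setminus G)\cong S_2$ and right multiplication an anti-homomorphism $\rho$ into the same group; because $S_2$ is abelian, $\rho$ is in fact a homomorphism as well. A homomorphism into $C_2\cong S_2$ is determined by its kernel, so it suffices to prove $\ker\lambda=\ker\rho$. Here $\ker\lambda=L_u\cap L_v=H$ since $u,v$ share the left stabilizer $H$ by Lemma \ref{bigonea}(d), and this kernel is proper (otherwise some noninvertible would have full stabilizer $G$), so $\lambda$ is the unique surjection $G\to G/H\cong C_2$.

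I expect step (i) to be the main obstacle, specifically identifying $\ker\rho$ with $H$, which is where the duality structure and the index-$2$ hypothesis must interact. The plan is to use the anti-automorphism $x\mapsto x^\ast$, which permutes the two-element set $B\setminus G$ by Lemma \ref{bigonea}(b),(c), so that $\{u^\ast,v^\ast\}=\{u,v\}$ and $L_{u^\ast}=L_{v^\ast}=H$. Then for $g\in G$ one checks that $g\in\ker\rho$, i.e.\ $ug=u$ and $vg=v$, is equivalent after applying $\ast$ to $g^{-1}u^\ast=u^\ast$ and $g^{-1}v^\ast=v^\ast$, i.e.\ to $g^{-1}\in L_{u^\ast}\cap L_{v^\ast}=H$, hence to $g\in H$. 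This gives $\ker\rho=H=\ker\lambda$, so $\lambda=\rho$ and therefore $gx=xg$ for every invertible $g$ and noninvertible $x$.

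Finally, step (ii) should follow formally from (i). Writing $v=g_0u$ for some $g_0\in G$ (possible as $\{u,v\}$ is one orbit) and invoking $g_0u=ug_0$ from step (i), associativity yields $uv=u(ug_0)=u^2g_0$ and $vu=(ug_0)u=u(g_0u)=u^2g_0$, so $uv=vu$, while $u^2$ and $v^2$ commute with themselves trivially. Hence all basis elements commute and $R$ is commutative. I would close by remarking that steps (i) and (ii) never reinvoke commutativity of $G$; the abelian hypothesis enters only through the invertible-invertible products, which is precisely why the statement is an equivalence.
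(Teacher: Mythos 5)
Your proof is correct, but it follows a genuinely different route from the paper's. The paper argues representation-theoretically: assuming $G_R$ abelian, Proposition \ref{triv} gives $|\mathrm{Irr}_0(R)|=|G_R|-2$ one-dimensional representations, the two characters $\psi_\pm$ extending $\mathrm{FPdim}$ account for two more, and comparing $\sum_\psi\dim(\psi)^2$ with $\mathrm{rank}(R)=|G_R|+2$ forces every remaining irreducible representation of the semisimple algebra $R\otimes_{\mathbb{Z}}\mathbb{C}$ to be one-dimensional, hence $R$ is commutative. You instead verify commutativity directly on the basis: the hypotheses force $|B\setminus G_R|=[G_R:H_R]=2$, so left and right multiplication give two surjections $G_R\to\mathrm{Sym}(B\setminus G_R)\cong C_2$ whose kernels both equal $H_R$ (the right stabilizer computation via $ug=u\iff g^{-1}u^\ast=u^\ast$ and Lemma \ref{bigonea} is the key step, and is sound), whence they coincide and every invertible commutes with every noninvertible; the two noninvertibles then commute with each other by writing $v=g_0u=ug_0$ and associating. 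Your argument is more elementary — it needs only Lemma \ref{bigonea} and orbit--stabilizer, not Proposition \ref{triv} or the character-count machinery — and it isolates exactly where the index-$2$ hypothesis enters (it makes $\mathrm{Sym}(B\setminus G_R)$ abelian). It also yields the sharper structural byproduct you note at the end: invertibles commute with noninvertibles under these hypotheses even for nonabelian $G_R$, so any noncommutativity of $R$ lives entirely inside $\mathbb{Z}G_R$. What the paper's route buys in exchange is an explicit enumeration of $\mathrm{Irr}(R)$ (all one-dimensional, $|G_R|+2$ of them), which feeds directly into the formal-codegree arguments used later; your proof does not produce that list.
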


\begin{proof}
For the nontrivial implication, assume $G_R$ is abelian.  Then all $\psi\in\mathrm{Irr}(G_R)$ restrict to irreducible representations of $H_R$ and therefore $|\mathrm{Irr}_{H_R}(G_R)|=|G_R|-[G_R:H_R]$.  Therefore Proposition \ref{triv} states
\begin{equation}
|\mathrm{Irr}_0(R)|=|\mathrm{Irr}_{H_R}(G_R)|=|G_R|-[G_R:H_R]=|G_R|-2.
\end{equation}
The $1$-dimensional representations $\psi_\pm$ with $\psi_\pm|_{G_R}=1$ and $\psi_\pm(x)=d_\pm$ (the roots of Equation (\ref{eq:1})) do not lie in $\mathrm{Irr}_0(R)$, and we compute
\begin{equation}
\dim(\psi_+)^2+\dim(\psi_-)^2+\sum_{\psi\in\mathrm{Irr}_0(R)}\dim(\psi)^2=|G_R|.
\end{equation}
The rank of $R$ over $\mathbb{Z}$ is $|G_R|+2$, hence $\dim(\psi)^2\leq2$ for any other irreducible representation of $R$ not included in the above list, i.e.\ $\dim(\psi)=1$, thus $R$ is commutative.
\end{proof}

\begin{note}
Corollary \ref{corcom} is also proven directly in \cite[Proposition 3(1)]{vercleyen2022low}, but under slightly different assumptions.
\end{note}

\begin{note}\label{haagerup}
Corollary \ref{corcom} is false in generality when $[G_R:H_R]>2$.  To see this, let $G$ be any finite abelian group and define a two-orbit fusion ring $R$ with $G=G_R$ which acts fixed-point free on the remaining $|G|$ basis elements, i.e.\ $|B/G|=2$ and $H:=H_R$ is trivial.  Let $x$ be any one of the noninvertible basis elements.  The fusion rules are determined by the choice
\begin{equation}
(gx)(hx):=gh^{-1}+\sum_{g\in G}gx
\end{equation}
for all $g,h\in G$.  In particular, $g^{-1}x=xg$ and so such a fusion ring is commutative if and only if $|G|=[G:H]\leq2$.  Collectively these have been referred to as the \emph{Haagerup-Izumi} fusion rings in a categorical context (e.g.\ \cite{MR2837122}).  The smallest noncommutative example, with $G=C_3$, is known as the \emph{Haagerup fusion ring} and has had an important role in the history of fusion categories and subfactors (refer also to Example \ref{lastexamples}(b)).
\end{note}

The general representation theory of two-orbit fusion rings varies greatly depending on the fusion coefficients.  Mild assumptions about the groups $G_R$ and $H_R$, or about the fusion coefficients themselves, can produce a tractable set of examples.  One of the vital structures is a set of involutions relating $G_R$ and $H_R$ which exists thanks to the duality structure of fusion rings.  For $x\in B\setminus G_R$, define $\theta_x:G_R/H_R\to G_R/H_R$ by $\overline{g}\mapsto \overline{g'}$ where $g'x=xg$ and $\overline{g}$ is the coset $gH$ of $g\in G$ for brevity.

\begin{lemma}\label{theta}
Let $(R,B)$ be a fusion ring with $|B/G_R|=2$.  For all $x\in B\setminus G_R$, $\theta_x\in\mathrm{Aut}(G_R/H_R)$.
\end{lemma}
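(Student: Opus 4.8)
The plan is to verify the three defining properties of a group automorphism for the assignment $\overline{g}\mapsto\overline{g'}$, where $g'x=xg$: well-definedness, the homomorphism property, and bijectivity. First one should check that the assignment even makes sense: since $xg\in x\cdot G=G\cdot x$ by Lemma \ref{bigonea}(c), there is indeed some $g'\in G$ with $g'x=xg$, and because associativity of $R$ forces the left and right $G$-actions on $B\setminus G$ to commute, the orbit $B\setminus G$ is a $(G,G)$-biset that is transitive on each side. The crux of the whole argument is a single preliminary identification: writing $H:=H_R=G_x$ for the left stabilizer and $H':=\{g\in G:xg=x\}$ for the right stabilizer, I would show $H=H'$.

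To prove $H=H'$ I would first obtain $H'\subseteq H$ from duality. If $xg=x$, applying the anti-involution gives $g^{-1}x^\ast=x^\ast$, so $g^{-1}\in G_{x^\ast}$; since $x^\ast\in G\cdot x$ by Lemma \ref{bigonea}(b), Lemma \ref{bigonea}(d) gives $G_{x^\ast}=G_x=H$, whence $g\in H$. For the reverse inclusion I would count orbit sizes: orbit--stabilizer applied to the transitive left and right actions yields $|G\cdot x|=[G:H]$ and $|x\cdot G|=[G:H']$, and these orbits coincide by Lemma \ref{bigonea}(c), so $[G:H]=[G:H']$ and hence $|H|=|H'|$. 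Combined with $H'\subseteq H$ this forces $H=H'$. In particular every $h\in H$ now fixes $x$ on \emph{both} sides.

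With $H=H'$ in hand the remaining steps are formal biset bookkeeping. For well-definedness, if $\overline{g}=\overline{h}$, say $h=gk$ with $k\in H$, then using $xg=g'x$ and $xk=x$ (as $k\in H=H'$) one computes $xh=(g'x)k=g'(xk)=g'x=xg$, so $g'x=xh=h'x$; since the left action is free modulo $H$ (precisely $ax=bx\Rightarrow \overline{a}=\overline{b}$, which is the statement $G_x=H$), this gives $\overline{g'}=\overline{h'}$. For the homomorphism property I would chase $x(gh)=(xg)h=(g'x)h=g'(xh)=g'h'x=(g'h')x$, so $(gh)'x=(g'h')x$ and thus $\theta_x(\overline{gh})=\overline{g'h'}=\theta_x(\overline{g})\,\theta_x(\overline{h})$ in the quotient group $G/H$, which is a group because $H\trianglelefteq G$ by Lemma \ref{bigonea}(e). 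Finally, as $G/H$ is finite it suffices to verify injectivity: if $\theta_x(\overline{g})=\overline{1}$ then $g'\in H$, so $xg=g'x=x$, giving $g\in H'=H$ and $\overline{g}=\overline{1}$.

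I expect the main obstacle to be the identification $H=H'$, since the definition of $\theta_x$ tacitly interchanges the left and right actions and nothing in the statement guarantees a priori that the two one-sided stabilizers agree; once that symmetry is secured, every other verification reduces to rewriting $g'x=xg$ and invoking left-freeness modulo $H$.
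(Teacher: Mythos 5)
Your proof is correct and follows essentially the same route as the paper's: verify well-definedness, the homomorphism property, and injectivity of $\overline{g}\mapsto\overline{g'}$ using the structural facts of Lemma \ref{bigonea}, with the paper's device of multiplying by $x^\ast$ and comparing invertible summands replaced by your equivalent ``left-freeness modulo $H$'' (i.e.\ $ax=bx\Rightarrow\overline{a}=\overline{b}$). The one genuine addition is your explicit verification that the left and right stabilizers of $x$ coincide --- a point the paper uses tacitly when it concludes injectivity from ``$x=xg$, which means $g\in H_R$ by definition'' --- and your argument for it (duality gives $H'\subseteq G_{x^\ast}=G_x$ via Lemma \ref{bigonea}(d), then orbit--stabilizer together with Lemma \ref{bigonea}(c) forces equality) cleanly fills that small gap.
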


\begin{proof}
To prove $\theta_x$ is well-defined, let $g_1,g_2\in G$ such that $g_jx=xg$ for $j=1,2$.  Then $x^\ast g_2^{-1}=g^{-1}x^\ast$.  Hence  $g_1xx^\ast g_2^{-1}=g^{-1}x^\ast xg=g^{-1}xx^\ast g$.  By comparing invertible elements, there exists $h\in H_R$ such that $g_1hg_2^{-1}=e$, which is to say $\overline{g_1}=\overline{g_2}$. And if $\overline{g},\overline{h}\in G_R/H_R$, then $(gh)'x=x(gh)=g'xh=g'h'x$.  By multiplying on the right by $x^\ast$ and comparing invertible summands, this shows $(gh)'=fg'h'$ for some $f\in H_R$, proving $\theta$ is a group homomorphism.   The kernel of $\theta$ is trivial as $\overline{g}\in\mathrm{ker}\,\theta$ implies there exists $h\in H_R$ such that $x=hx=xg$, i.e.\ $x=xg$ which means $g\in H_R$ by definition.
\end{proof}

\begin{lemma}\label{sub}
Let $(R,B)$ be a fusion ring with $|B/G_R|=2$.  If $G_R/H_R$ is abelian, $\theta_x=\theta_y$ for all $x,y\in B\setminus G_R$, and $\theta_x^2=\mathrm{id}_{G_R/H_R}$.
\end{lemma}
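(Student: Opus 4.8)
The plan is to pin down how the automorphism $\theta_x$ depends on the choice of noninvertible element $x$, and how it interacts with duality. Write $G:=G_R$ and $H:=H_R$; recall that $H\trianglelefteq G$ (Lemma \ref{bigonea}(e)), so $G/H$ is a genuine group, that $\theta_x\in\mathrm{Aut}(G/H)$ (Lemma \ref{theta}), and that the two-orbit hypothesis makes $B\setminus G=G\cdot x$ a single orbit, so every noninvertible basis element has the form $gx$ for some $g\in G$. The technical linchpin throughout is the cancellation law: if $a,b\in G$ and $ax=bx$ as basis elements, then $b^{-1}a\in G_x=H$, i.e.\ $\overline a=\overline b$. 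This is what lets me read cosets off relations of the form $ax=bx$, and every identity below must be reduced to $G/H$ before any element is cancelled.

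First I would compute $\theta_{gx}$ in terms of $\theta_x$. Fix $g\in G$ and $\overline h\in G/H$, and let $\overline k:=\theta_x(\overline h)$, so that $kx=xh$. Then $(gx)h=g(xh)=g(kx)=(gk)x$, whereas the defining relation for $\theta_{gx}$ requires a representative $h'$ with $h'(gx)=(gx)h$, namely $(h'g)x=(gk)x$. Cancelling $x$ gives $\overline{h'}\,\overline g=\overline g\,\overline k$, i.e.\ $\theta_{gx}(\overline h)=\overline g\,\theta_x(\overline h)\,\overline g^{-1}$. Thus $\theta_{gx}$ is $\theta_x$ twisted by conjugation by $\overline g$. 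When $G/H$ is abelian this conjugation is trivial, so $\theta_{gx}=\theta_x$; since every noninvertible $y$ is of the form $gx$, this establishes $\theta_x=\theta_y$ for all $x,y\in B\setminus G$.

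Next I would extract the behavior under duality by applying the anti-involution $\ast$ to $kx=xh$, which yields $x^\ast k^{-1}=h^{-1}x^\ast$, i.e.\ $h^{-1}x^\ast=x^\ast k^{-1}$. By the definition of $\theta_{x^\ast}$ this says $\theta_{x^\ast}(\overline{k^{-1}})=\overline{h^{-1}}$, that is $\theta_{x^\ast}\!\left(\theta_x(\overline h)^{-1}\right)=\overline h^{-1}$ for every $\overline h$. Reparametrizing by $\overline m=\theta_x(\overline h)$ and using that $\theta_x^{-1}$ is a homomorphism (so it commutes with inversion) collapses this to $\theta_{x^\ast}=\theta_x^{-1}$, with no abelian hypothesis required. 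Finally I would combine the two computations: because $x^\ast\in B\setminus G$ lies in the same orbit as $x$, the abelian case of the first step gives $\theta_{x^\ast}=\theta_x$, while the duality step gives $\theta_{x^\ast}=\theta_x^{-1}$; hence $\theta_x=\theta_x^{-1}$, i.e.\ $\theta_x^2=\mathrm{id}_{G_R/H_R}$.

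The main obstacle is not any single identity but the consistent bookkeeping: all the group elements produced by the relations $g'x=xg$ are only well-defined modulo $H$, so the argument must be carried out in $G/H$ and every cancellation justified by the law $ax=bx\Rightarrow\overline a=\overline b$. The abelian hypothesis enters in exactly one place, the first step, where it is used to discard the conjugation twist; the duality identity $\theta_{x^\ast}=\theta_x^{-1}$ holds in general.
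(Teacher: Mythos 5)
Your proof is correct and follows essentially the same route as the paper's: show $\theta$ is independent of the orbit representative using the abelian hypothesis, then apply the anti-involution to the defining relation to get $\theta_{x^\ast}=\theta_x^{-1}$ and combine. The only cosmetic difference is that you first record the general conjugation formula $\theta_{gx}=\overline{g}\,\theta_x\,\overline{g}^{-1}$ before specializing, whereas the paper invokes commutativity of $G_R/H_R$ directly.
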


\begin{proof}
Let $g\in G_R$.  If $hx=y$ for some $h\in G_R$, then $g'x=xg$ implies $hg'x=yg$.  But $hg'=fg'h$ for some $f\in H_R$ since $G_R/H_R$ is abelian.  Thus $(fg')y=yg$ with $\overline{fg'}=\overline{g'}$, i.e.\ $\theta_x(\overline{g})=\theta_y(\overline{g})$ for all $g\in G_R$.  Now $g'x=xg$ implies $xg^{-1}=(g')^{-1}x$, hence $gx^\ast=x^\ast g'$.  Therefore $\theta_{x^\ast}(\overline{g'})=\overline{g}$.  But we have shown $\theta_{x^\ast}=\theta_x$, proving $\theta_x$ has order at most 2.
\end{proof}




\subsection{Representations of uniform two-orbit fusion rings}\label{sec:uni}

\par We end our section on fusion rings with a slight deviation from our main goal in order to discuss one class of two-orbit fusion rings which has particularly elegant representation theory.  These are unsurprisingly the vast majority of two-orbit fusion rings which are known to be categorifiable.  Let $(R,B)$ be a fusion ring with $|B/G_R|=2$, and denote $G:=G_R$ and   $H:=H_R$.  We say a fusion ring $(R,B)$ with $|B/G_R|=2$ is \emph{uniform}, or $R$ has uniform fusion rules, if $G/H$ is abelian, and there exists $k\in\mathbb{Z}_{\geq0}$ such that for all $x\in B\setminus G$,
\begin{equation}\label{thurteen}
xx^\ast=\sum_{g\in H}g+k\sum_{y\in B\setminus G}y.
\end{equation}
Of most importance in the definition of uniform is that there is a canonical automorphism $\theta_R\in\mathrm{Aut}(G/H)$ which is given by $\theta_x$ for any $x\in B\setminus G$ by Lemmas \ref{theta}--\ref{sub}.  We believe this condition plays an important role in the categorification of two-orbit fusion rings.

\begin{example}\label{fooor}\,
\begin{itemize}
\item[(a)]  In \cite{MR4179724}, Pinhas Grossman \& Masaki Izumi produced infinite families of modular data \cite[Section 8.17]{tcat} which are conjecturally associated with \emph{quadratic} fusion categories, which are in turn categorifications of two-orbit fusion rings.  The Grothendieck rings of quadratic fusion categories in this sense are a subset of the uniform fusion rings described here where $\theta_R$ is assumed to be the inversion automorphism, the uniform fusion coefficient $k$ is a multiple of $|H|$, and $G$ itself is abelian.  But for instance the character ring of $S_3$ is categorifiable and not quadratic, but is uniform.  An important example of a categorifiable two-orbit uniform fusion ring which is not quadratic with $G_R$ nonabelian is provided in \cite[Example 5.1(3)]{vercleyen2022low}.  In this case $G_R=S_3$, $H_R=C_3$, and $k=1$.
\item[(b)]  Let $(R,B)$ be a uniform two-orbit fusion ring.  Identify $K:=G_R/H_R$ with a subset of coset representatives in $G_R$, i.e.\ for any $x\in B\setminus G_R$, $B\setminus G_R=\{gx:g\in K\}$.  In this manner $\theta_R$ can be considered as a permutation of the set $K$.  If $k=0$, then $(gx)(hx)=g\theta_R(h)\sum_{f\in H_R}f$ for all $g,h\in K$, e.g.\ when $H_R$ is trivial, $R$ is isomorphic to the integral group ring $\mathbb{Z}(G_R\rtimes_{\theta_R}C_2)$.  Proposition \ref{reps} below states that the representation-theoretic properties of these rings remains essentially constant when the elements of $B\setminus G_R$ are self-dual and $G_R$ is abelian, independent of the parameter $k\in\mathbb{Z}_{\geq0}$.  So in at least this case, one may think of the two-orbit uniform fusion rings as simply ``deformations'' of the finite groups $(G_R/H_R)\rtimes_{\theta_R}C_2$ of varying intensity.
\end{itemize}
\end{example}

\begin{proposition}\label{reps}
Let $(R,B)$ be a fusion ring with $|B/G_R|=2$.  Assume $G:=G_R$ is abelian and denote $H:=H_R$.   If $R$ is uniform and the elements of $B\setminus G$ are self-dual, then there is a bijection between $\mathrm{Irr}(R)$ and the disjoint union $\mathrm{Irr}_H(G)\cup\mathrm{Irr}((G/H)\rtimes_{\theta_R} C_2)$.
\end{proposition}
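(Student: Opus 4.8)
The plan is to split $\mathrm{Irr}(R)$ by the action of a central idempotent and match the two pieces with the two factors of the disjoint union. Write $G:=G_R$, $H:=H_R$, $K:=G/H$ and $\Gamma:=K\rtimes_{\theta_R}C_2$; since the stabilizer of every noninvertible basis element is $H$ (Lemma \ref{bigonea}(d)), $B\setminus G$ is a single $G$-orbit of size $[G:H]=|K|$ and $\mathrm{rank}(R)=|G|+|K|$. I would first introduce $e:=|H|^{-1}\sum_{h\in H}h$. Using that $\theta_x$ fixes the trivial coset one gets $hx=xh=x$ for all $h\in H$ and $x\in B\setminus G$, hence $ex=xe=x$; combined with $G$ abelian this makes $e$ a central idempotent of $R\otimes_{\mathbb Z}\mathbb C$, so $e$ acts as $0$ or $1$ on each $\psi\in\mathrm{Irr}(R)$. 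If $\psi(e)=0$ then $\psi(x)=\psi(ex)=\psi(e)\psi(x)=0$ for all $x\in B\setminus G$, so $\psi\in\mathrm{Irr}_0(R)$, while (\ref{eleven}) gives $\psi(e)=0$ whenever $\psi\in\mathrm{Irr}_0(R)$. Thus $\mathrm{Irr}_0(R)=\{\psi:\psi(e)=0\}$ and Proposition \ref{triv} furnishes the bijection $\mathrm{Irr}_0(R)\leftrightarrow\mathrm{Irr}_H(G)$, accounting for the first factor.

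It remains to biject $\{\psi:\psi(e)=1\}$ with $\mathrm{Irr}(\Gamma)$. These are exactly the irreducibles of the semisimple corner algebra $A:=e(R\otimes_{\mathbb Z}\mathbb C)$, and $\dim A=2|K|=|\Gamma|$: the elements $u_{\overline g}:=eg$ depend only on $\overline g\in K$ and span a copy of $\mathbb C[K]$, and fixing $x_0\in B\setminus G$ and setting $x_{\overline g}:=gx_0=u_{\overline g}x_0$ gives the remaining $|K|$ basis vectors. I would record the relations $u_{\overline g}x_{\overline h}=x_{\overline g\overline h}$ and $x_{\overline h}u_{\overline g}=x_{\overline h\theta_R(\overline g)}$, and, using self-duality of $x_0$ together with the uniform relation (\ref{thurteen}),
\[
x_{\overline g}x_{\overline h}=|H|\,u_{\overline g\theta_R(\overline h)}+k\sum_{\overline f\in K}x_{\overline f}.
\]

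For an $A$-module $V$ I would decompose $V=\bigoplus_\chi V_\chi$ over the linear characters $\chi$ of the abelian group $K$ and let $T$ and $U_{\overline g}$ denote the actions of $x_0$ and $u_{\overline g}$. The middle relations yield the twisted intertwiner $TU_{\overline g}=U_{\theta_R(\overline g)}T$, so $T$ maps $V_\chi$ to $V_{\chi\circ\theta_R}$ and every irreducible is supported on a single $\theta_R$-orbit of characters. Since $\sum_{\overline f}U_{\overline f}=|K|P_1$ for $P_1$ the projection onto the trivial weight $V_1$, the displayed relation becomes $T^2=|H|+k|K|\,P_1T$. This should be compared orbit-by-orbit with the representation theory of $\Gamma$, where the same twisted relation holds but with $\tau^2=\mathrm{id}$ in place of the quadratic in $T$.

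The comparison then runs as follows. On a two-element orbit $\{\chi,\chi\circ\theta_R\}$ and on a nontrivial $\theta_R$-fixed character one has $P_1=0$, so $T^2=|H|$ and the block structure (one $2$-dimensional, respectively two $1$-dimensional, irreducibles) agrees with that of $\Gamma$; on the trivial orbit $P_1=\mathrm{id}$ and $T^2-k|K|T-|H|=0$ is precisely Equation (\ref{eq:1}) with $r=k[G:H]$ and $s=|H|$, whose distinct roots $d_\pm$ give exactly the two one-dimensional representations $\psi_\pm$ of Corollary \ref{corcom}, matching the two extensions of the trivial character of $K$ to $\Gamma$. A count $2+2a+4b=2|K|=\dim A$, where $a$ is the number of nontrivial fixed characters and $b$ the number of two-element orbits, confirms these exhaust $\mathrm{Irr}(A)$, so $A$ and $\mathbb C[\Gamma]$ have the same block sizes and $\mathrm{Irr}(A)\leftrightarrow\mathrm{Irr}(\Gamma)$. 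I expect the one genuinely delicate point to be this last step: the extra summand $k\sum_{\overline f}x_{\overline f}$ means $A$ is not literally $\mathbb C[\Gamma]$, and the crux is recognizing that this deformation is concentrated on the trivial $K$-weight through $P_1$, so it merely reshapes the pair $\psi_\pm$ (replacing the eigenvalues $\pm1$ by $d_\pm$) while leaving the global block structure, and hence the representation theory, unchanged.
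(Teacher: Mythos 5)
Your argument is correct, and it reaches the conclusion by a genuinely different route than the paper. The paper's proof is constructive: starting from $\psi\in\mathrm{Irr}_{G/H}((G/H)\rtimes_{\theta_R}C_2)$ it writes down $\tilde\psi$ explicitly on $R$ (rescaling the reflection part by $\sqrt{|H|}$), verifies the multiplicativity relations by hand, pairs the trivial and sign representations with $\psi_\pm$, and closes with the same sum-of-squares count $|G|-[G:H]+2[G:H]=\mathrm{rank}(R)$. You instead split $R\otimes_{\mathbb Z}\mathbb C$ by the central idempotent $e=|H|^{-1}\sum_{h\in H}h$ (centrality is fine: $hx=xh=x$ follows from $H=G_x$ together with $\theta_x(\overline{e})=\overline{e}$), identify the $e=0$ block with $\mathrm{Irr}_0(R)\leftrightarrow\mathrm{Irr}_H(G)$ exactly as the paper does via Proposition \ref{triv}, and then analyze the corner algebra $A=e(R\otimes_{\mathbb Z}\mathbb C)$ of dimension $2[G:H]=|\Gamma|$ through its $K$-weight spaces. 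Your relations $u_{\overline{g}}x_{\overline{h}}=x_{\overline{gh}}$, $x_{\overline{h}}u_{\overline{g}}=x_{\overline{h}\theta_R(\overline{g})}$ and $T^2=|H|+k|K|P_1T$ all check out, and the orbit-by-orbit comparison with $\mathbb C[\Gamma]$ plus the count $2+2a+4b=\dim A$ legitimately forces every listed block to occur, so the bijection follows. What your approach buys is conceptual: it isolates precisely where the uniform multiplicity $k$ enters (only through $P_1$, i.e.\ the trivial $K$-weight), thereby making rigorous the informal remark in Example \ref{fooor}(b) that these rings are ``deformations'' of $\mathbb Z((G/H)\rtimes_{\theta_R}C_2)$; what the paper's construction buys is an explicit intertwiner-free formula for each irreducible representation of $R$, which is what gets used downstream when computing formal codegrees. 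Both proofs lean on Proposition \ref{triv} in the same way for the first factor of the disjoint union.
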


\begin{proof}
Identify $K:=G/H$ with a subset of coset representatives in $G$ so that $\theta_R$ can be considered as a permutation of the set $K$, and let $k\in\mathbb{Z}_{\geq0}$ be the uniform multiplicity of the fusion rules of $R$ as in Equation (\ref{thurteen}).  Lemma \ref{triv} states that the $\psi\in\mathrm{Irr}(R)$ which vanish on $B\setminus G$ are in bijection with $\mathrm{Irr}_H(G)$.  Since $G$ is assumed abelian, then $|\mathrm{Irr}_H(G)|=|G|-[G:H]$.  Now let $\psi:(G/H)\rtimes_{\theta_R} C_2\to\mathrm{End}(V)$ be irreducible such that $\psi|_{G/H}$ is a sum of nontrivial irreducible representations of $G/H$.  By Frobenius-reciprocity, $1_{G/H}$ appears as a summand of $\psi|_{G/H}$ if and only if $\psi$ is a summand of $\mathrm{Ind}_{G/H}^{(G/H)\rtimes_{\theta_R}C_2}1_{G/H}$, which is $2$-dimensional.  Therefore only two $1$-dimensional representations have this property: the representations $1_{(G/H)\rtimes_{\theta_R}C_2}^\pm$ where $1_{(G/H)\rtimes_{\theta_R}C_2}^+:=1_{(G/H)\rtimes_{\theta_R}C_2}$ and $1_{(G/H)\rtimes_{\theta_R}C_2}^-$ is its complement in $\mathrm{Ind}_{G/H}^{(G/H)\rtimes_{\theta_R}C_2}1_{G/H}$.   So our assumption is that $\psi$ is distinct from these.

\par Denote $C_2=\{0,1\}$, and define $\tilde{\psi}:R\to\mathrm{End}(V)$ such that $\tilde{\psi}(g):=\psi(\overline{g}\times 0)$ for all $g\in G$ (where $\overline{g}$ is the coset of $g$ in $G/H$), and for a fixed $x\in B\setminus G$, $\tilde{\psi}(gx):=\sqrt{|H|}\psi(\overline{g}\times 1)$ for $g\in K$, extending linearly.  As $x=x^\ast$, this implies
\begin{equation}
\tilde{\psi}(x^2)=\tilde{\psi}\left(\sum_{g\in H}g\right)+k\tilde{\psi}\left(\sum_{g\in K}gx\right)=|H|\mathrm{id}_V+k\sum_{g\in G/H}\psi(\overline{g}\times0)=|H|\mathrm{id}_V
\end{equation}
since $\psi\in\mathrm{Irr}_{G/H}((G/H)\rtimes_{\theta_R} C_2)$ by assumption.  To finish verifying that $\tilde{\psi}$ is a representation of $R$, note that $\tilde{\psi}(gh)=\tilde{\psi}(g)\tilde{\psi}(h)$ for any $g,h\in G$ and $\tilde{\psi}(gx)=\tilde{\psi}(g)\tilde{\psi}(x)$ trivially from the definition. It remains to verify that for all $g,h\in K$
\begin{align}
\tilde{\psi}(gxhx)=\tilde{\psi}(g\theta_R(h)x^2)=\tilde{\psi}(g\theta_R(h))\tilde{\psi}(x^2)&=|H|\tilde{\psi}(\overline{g\theta_R(h)}) \\
&=|H|\psi(\overline{g\theta_R(h)}\times 0) \\
&=|H|\psi((\overline{g}\times 1)(\overline{h}\times1)) \\
&=\sqrt{|H|}\psi(\overline{g}\times 1)\sqrt{|H|}\psi(\overline{h}\times1)) \\
&=\tilde{\psi}(gx)\tilde{\psi}(hx).
\end{align}
In particular, each $\psi\in\mathrm{Irr}((G/H)\rtimes_{\theta_R}C_2)$ determines at least one irreducible representation of $R$, if one associates the trivial and sign representations of $(G/H)\rtimes_{\theta_R}C_2$ to $R$ with the two unique 1-dimensional $\psi\in\mathrm{Irr}(R)$ such that $\psi|_G=1$ and $\psi(x)=\pm d$ for $x\in B\setminus G$.  The latter two representations of $R$ are clearly nonisomorphic, while the remaining representations are pairwise nonisomorphic because their restrictions to $G/H$, hence to $G$, are.  Lastly, we compute
\begin{equation}
\sum_{\psi\in\mathrm{Irr}_H(G)\cup\mathrm{Irr}((G/H)\rtimes_{\theta_R}C_2)}\dim(\psi)^2=|G|-[G:H]+2[G:H]=\mathrm{rank}(R),
\end{equation}
and so the above list of irreducible representations of $R$ is exhaustive.
\end{proof}

\begin{example}\label{lastexamples}\,
\begin{itemize}
\item[(a)] The \emph{near-group} fusion rings $R(G,\ell)$ are those uniform two-orbit fusion rings with $G_R=H_R$.  Their representation theory was described in \cite[Appendix A]{ost15} which one verifies against Proposition \ref{reps} when $G_R$ is abelian since $(G_R/H_R)\rtimes_{\theta_R} C_2\cong C_2$, and $\mathrm{Irr}_{H_R}(G_R)=\mathrm{Irr}(G_R)\setminus\{1_G\}$.
\item[(b)] The Haagerup fusion ring (see Note \ref{haagerup}) has $G_R=C_3$, $H_R$ trivial, $\theta_R$ the inversion automorphism of $C_3$, and uniform fusion rules with $k=1$.  A connection between a categorification of the Haagerup fusion ring to the symmetric group $S_3$ was first noted by David Evans in \cite{MR2343850}, and elaborated upon by David Evans \& Terry Gannon in \cite{MR2837122} for the Haagerup-Izumi series of fusion rings with $G_R=C_n$.  In particular, the modular data of the doubles of these fusion categories (refer to Section \ref{sec:cat}) can be understood in part in terms of the modular data of the twisted doubles of $C_n\rtimes_{\theta_R} C_2\cong D_n$.  Proposition \ref{reps} demonstrates that this connection exists on the level of the representations of their Grothendieck rings, and furthermore, independent of the uniform fusion coefficient $k\in\mathbb{Z}_{\geq0}$.
\end{itemize}
\end{example}




\section{Categorification}\label{sec:cat}

\par A fusion category is the categorical analog of a fusion ring.  As we are mainly interested in which fusion rings arise as the Grothendieck rings of fusion categories, we will only briefly recount the basic notions of fusion categories and direct the interested reader to the standard text \cite{tcat} for additional detail.  Succinctly, a fusion category (over $\mathbb{C}$) is a $\mathbb{C}$-linear semisimple rigid monoidal category (with product $\otimes$, monoidal unit $\mathbbm{1}_\mathcal{C}$, and duality $X\mapsto X^\ast$) whose set of isomorphism classes of simple objects, $\mathcal{O}(\mathcal{C})$, is finite and includes $\mathbbm{1}_\mathcal{C}$.  This guarantees the Grothendieck ring of $\mathcal{C}$ is a fusion ring, and so all of the concepts/notation/vocabulary related to fusion rings from Section \ref{sec:fus} effortlessly pass to fusion categories.  A pivotal structure \cite[Section 4.7]{tcat} for a fusion category allows one to produce a robust set of numerical invariants through a categorical notion of trace \cite[Section 4.7]{tcat}, denoted $\mathrm{Tr}(-)$ in our case, including categorical dimensions $\dim(X)$ and Frobenius-Schur indicators $\nu_n(X)$ \cite{MR2313527} for all $X\in\mathcal{O}(\mathcal{C})$ and $n\in\mathbb{Z}_{\geq1}$.  The details of pivotal structures are less relevant in this exposition as we immediately observe (Lemma \ref{immediate}) that the fusion categories in question here possess a canonical pivotal structure, which is spherical in the sense of \cite[Definition 4.7.14]{tcat}.
\par In particular, the double (or Drinfeld center \cite[Section 7.13]{tcat}) $\mathcal{Z}(\mathcal{C})$ of a spherical fusion category is a modular tensor category \cite[Section 8.14]{tcat}, which is a fusion category equipped with additional structure such as a braiding \cite[Section 8.1]{tcat}.  One other structure that is necessary for our arguments is the collection of \emph{twists} $\theta_X$ which are roots of unity for all simple $X$ in a modular tensor category \cite[Section 8.18]{tcat}.  With all of these novel tools, it is often easier to understand fusion categories by inspecting their doubles rather than analyzing the fusion category directly.  On the level of objects, the double $\mathcal{Z}(\mathcal{C})$ of a fusion category $\mathcal{C}$ consists of pairs of an object $X\in\mathcal{C}$ and commutation data for all other objects of $\mathcal{C}$ (so-called half-braidings).  A single object of $\mathcal{C}$ may commute with other objects in myriad ways, so the double of a fusion category is larger; specifically $\mathrm{FPdim}(\mathcal{Z}(\mathcal{C}))=\mathrm{FPdim}(\mathcal{C})^2$ \cite[Theorem 7.16.6]{tcat}.  By forgetting the half-braidings, there is a functor $F:\mathcal{Z}(\mathcal{C})\to\mathcal{C}$, whose adjoint we refer to as \emph{induction} $I:\mathcal{C}\to\mathcal{Z}(\mathcal{C})$ \cite[Section 9.2]{tcat}.  The adjointness condition implies that for all $X\in\mathcal{C}$ and $Y\in\mathcal{Z}(\mathcal{C})$, $\mathrm{Hom}_{\mathcal{Z}(\mathcal{C})}(I(X),Y)=\mathrm{Hom}_{\mathcal{C}}(X,F(Y))$.  For objects $X,Y$ in a fusion category $\mathcal{C}$, we will repeatedly use the notation
\begin{equation}
[X,Y]:=\dim\mathrm{Hom}_\mathcal{C}(X,Y)
\end{equation}
for brevity, where the category itself is suppressed in the notation.  This will not be an issue as we are only considering fusion categories and their doubles, and the ambient category will be clear by context.
\par A brief review of the needed results about the induction and restriction functors, and their relation to formal codegrees, and the twists in the double can be found in \cite[\mbox{Sections 2.2--2.3}]{ost15}.  Of most importance are the facts that \cite[Proposition 5.4]{ENO}: if $\mathcal{C}$ is a fusion category, for all $X\in\mathcal{O}(\mathcal{C})$
\begin{equation}
F(I(X))=\bigoplus_{Y\in\mathcal{O}(\mathcal{C})}Y\otimes X\otimes Y^\ast,
\end{equation}
and \cite[Theorem 2.13]{ost15} if $f_\psi$ is a formal codegree of a fusion ring categorified by a spherical fusion category $\mathcal{C}$, then there exists $A_\psi\in\mathcal{O}(\mathcal{Z}(\mathcal{C}))$ such that $f_\psi\dim(A_\psi)=\dim(\mathcal{C})$ and $[\mathbbm{1},F(A_\psi)]=[I(\mathbbm{1}_\mathcal{C}),A_\psi]=\dim(\psi)$.

\subsection{Results for two-dimension and two-orbit fusion rings}\label{sec:catstuff}

Recall that a fusion category $\mathcal{C}$ is \emph{pseudounitary} if $\mathrm{FPdim}(\mathcal{C})=\dim(\mathcal{C})$, in the sense of \cite[Definition 2.2]{ENO}.  This ensures $\mathcal{C}$ possesses a unique spherical structure such that $\dim(X)=\mathrm{FPdim}(X)$ for all $X\in\mathcal{C}$ \cite[Proposition 9.5.1]{tcat}.  The following lemma states that this is the case for all categorifications of two-dimension fusion rings and we will henceforth assume $\mathcal{C}$ has been equipped with this canonical spherical structure, but we will still favor $\mathrm{FPdim}$ in our notation to distinguish from the (complex) dimensions of representations.

\begin{lemma}\label{immediate}
Let $(R,B)$ be a fusion ring and $d\in\mathbb{R}_{>1}$ such that for all $x\in B$, $\mathrm{FPdim}(x)\in\{1,d\}$.  If $R$ is categorified by a fusion category $\mathcal{C}$, then $\mathcal{C}$ is pseudounitary.
\end{lemma}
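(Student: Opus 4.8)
The plan is to recast pseudounitarity as the equality of two formal codegrees of $R$ and then to control that equality according to how arithmetically complicated the global Frobenius--Perron dimension is.

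By definition $\mathcal{C}$ is pseudounitary exactly when $\dim(\mathcal{C})=\mathrm{FPdim}(\mathcal{C})$, and both numbers are formal codegrees of the Grothendieck ring: $\mathrm{FPdim}(\mathcal{C})$ is the codegree of the character $X\mapsto\mathrm{FPdim}(X)$, while the global dimension $\dim(\mathcal{C})$ is the codegree of the character $X\mapsto\dim(X)$ recorded by the canonical pivotal structure \cite[Section 4.7]{tcat}, in the sense of \cite[Lemma 2.6]{codegrees}. I would begin by invoking \cite{ENO}, according to which $\dim(\mathcal{C})$ and $\mathrm{FPdim}(\mathcal{C})$ lie in a single Galois orbit and $\mathrm{FPdim}(\mathcal{C})$ is its largest member. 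The problem thus reduces to showing that the dimension character selects this top conjugate.

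The first case, $\mathrm{FPdim}(\mathcal{C})\in\mathbb{Q}$, is immediate: a rational number is the only element of its Galois orbit, so $\dim(\mathcal{C})=\mathrm{FPdim}(\mathcal{C})$ at once. This already disposes of every $\mathcal{C}$ with $d\in\mathbb{Z}$, where $\mathcal{C}$ is integral, and, through Lemmas \ref{transitive} and \ref{bigonea}, of those irrational cases in which no noninvertible appears in $xx^\ast$. Indeed, writing $n:=[G_R:H_R]$ for the common size of the noninvertible orbit and noting that the invertible part of $xx^\ast$ is $\sum_{h\in H_R}h$, so that $s=|H_R|$ and $ns=|G_R|$, one computes $\mathrm{FPdim}(\mathcal{C})=|G_R|+n\,d^2=2|G_R|+nr\,d$; hence $r=0$ forces $\mathrm{FPdim}(\mathcal{C})=2|G_R|\in\mathbb{Q}$.

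The hard case, and what I expect to be the main obstacle, is $d\notin\mathbb{Z}$ with $r\geq1$, where the displayed formula gives $\mathrm{FPdim}(\mathcal{C})=2|G_R|+nr\,d\notin\mathbb{Q}$; here the fusion ring alone cannot decide which of the two conjugates is realized by $\dim(\mathcal{C})$, and one must inject genuinely categorical positivity. The strategy I would pursue is to bound $\dim(\mathcal{C})$ below by its Galois conjugate, using on the one hand that by \cite[Remark 2.12]{ost15} every conjugate of a formal codegree dominates the dimension of its character (here equal to $1$), and on the other hand the positive semidefinite operator $\sum_{X}XX^\ast$ analysed in Lemma \ref{bound}, whose top eigenvalue is $\mathrm{FPdim}(\mathcal{C})$, in order to force the global dimension onto the root $d_+$ rather than $d_-$. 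Establishing this last, strict comparison, rather than merely knowing that both conjugates exceed $1$, is the crux on which the argument stands or falls.
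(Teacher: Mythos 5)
There is a genuine gap here, and you have located it yourself: the case $d\notin\mathbb{Z}$ with $r\geq1$ is left as ``the crux on which the argument stands or falls,'' and the tools you name for it provably cannot close it. Writing $n=[G_R:H_R]$ and $s=|H_R|$, the Galois conjugate of $\mathrm{FPdim}(\mathcal{C})$ is the formal codegree of the one-dimensional character $\psi_-$ (trivial on $G_R$, sending noninvertibles to $d_-$), and one computes $\sigma(\mathrm{FPdim}(\mathcal{C}))=2|G_R|+n(r^2-rd)=|G_R|+ns^2/d^2>|G_R|$. So the conjugate codegree also satisfies the lower bound $f_{\psi}\geq|G_R|/\dim(\psi)$ of Lemma \ref{bound}, and it and all of its conjugates exceed $\dim(\psi_-)=1$ as required by \cite[Remark 2.12]{ost15}. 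Neither criterion distinguishes $d_+$ from $d_-$, so no argument at the level of formal codegrees of $R$ can force $\dim(\mathcal{C})$ onto the larger root; genuinely categorical input is unavoidable, exactly as you suspect. The paper supplies it by a different route: Lemma \ref{transitive} shows irrational $d$ forces $|B/G_R|=2$, and pseudounitarity is then quoted from \cite[Theorem IV.3.6]{thornton2012generalized}, which establishes it for all categorifications of two-orbit (generalized near-group) fusion rings. Your write-up is missing precisely this ingredient.

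A second, smaller problem is the opening premise that $\dim(\mathcal{C})$ and $\mathrm{FPdim}(\mathcal{C})$ lie in a single Galois orbit. That is not a theorem of \cite{ENO} and is false for general fusion categories: the Deligne product of the Yang--Lee and Fibonacci categories has $\dim(\mathcal{C})=5$ while $\mathrm{FPdim}(\mathcal{C})=(15+5\sqrt5)/2$. (This example is outside the hypotheses of the lemma, but it shows the cited principle is not available as a black box; what \cite{ENO} actually proves is that every Galois conjugate of $\dim(\mathcal{C})$ is bounded above by $\mathrm{FPdim}(\mathcal{C})$, which is weaker.) Your rational case is nonetheless correct in its conclusion --- including the $r=0$ subcase, where the computation $\mathrm{FPdim}(\mathcal{C})=2|G_R|$ is right --- but the correct justification is that a weakly integral fusion category is pseudounitary \cite[Proposition 9.6.5]{tcat}, which is what the paper cites. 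In short: the easy case holds with the citation repaired, and the hard case remains open in your proposal.
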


\begin{proof}
Lemma \ref{transitive} states that if $d$ is irrational, then $|B/G_R|=2$, thus $\mathcal{C}$ is pseudounitary by \cite[Theorem IV.3.6]{thornton2012generalized}.  Otherwise $\mathcal{C}$ is integral, hence pseudounitary by \cite[Proposition 9.6.5]{tcat}.
\end{proof}

\begin{proposition}\label{noncom}
Let $(R,B)$ be a fusion ring with $|B/G_R|=2$. Assume that the nontrivial Frobenius-Perron dimension $d$ is a root of $x^2-rx-s$ for some $r\in\mathbb{Z}_{\geq0}$ and $s\in\mathbb{Z}_{\geq1}$, and that $0<r<|H_R|-1$.  If $R$ is categorifiable, then $R$ is noncommutative.
\end{proposition}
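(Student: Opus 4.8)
The plan is to assume $R$ is categorified by a fusion category $\mathcal{C}$ and extract a divisibility constraint strong enough to fail. Write $G:=G_R$, $H:=H_R$, $m:=[G:H]$ and fix a noninvertible $x\in B\setminus G$. By Lemma \ref{bigonea}(d) the invertible part of $xx^\ast$ is exactly $\sum_{h\in H}h$, so applying $\mathrm{FPdim}$ to the decomposition of $xx^\ast$ identifies $s=|H|$ and $r$ as the total multiplicity of the noninvertible summands; thus the hypothesis reads $0<r<|H|-1=s-1$, and the roots $d_\pm$ of $x^2-rx-s$ (with $d=d_+$) satisfy $d_++d_-=r$, $d_+d_-=-s$, whence $0<r<s-1$ is equivalent to $1<d_-^2<s$. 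Independently of commutativity, the assignment $g\mapsto 1$ for $g\in G$ and $y\mapsto d_-$ for $y\in B\setminus G$ extends to a one-dimensional representation $\psi_-$ of $R$: for noninvertible $x,y$ the element $xy$ is a left $G$-translate of $xx^\ast$, so its total invertible and noninvertible multiplicities are $s$ and $r$, and the consistency condition $\psi_-(xy)=s+rd_-=d_-^2=\psi_-(x)\psi_-(y)$ holds because $d_-$ is a root of $x^2-rx-s$. Its formal codegree is $f_{\psi_-}=|G|+m\,d_-^2$, computed as in Proposition \ref{triv} via \cite[Lemma 2.3]{codegrees}.

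By Lemma \ref{immediate}, $\mathcal{C}$ is pseudounitary, so $\dim(\mathcal{C})=\mathrm{FPdim}(R)=|G|+m\,d^2$ and $\mathcal{Z}(\mathcal{C})$ is a pseudounitary modular category in which every simple object has $\mathrm{FPdim}\geq 1$. The codegree--object correspondence \cite[Theorem 2.13]{ost15} then produces a simple $A_{\psi_-}\in\mathcal{O}(\mathcal{Z}(\mathcal{C}))$ with $f_{\psi_-}\mathrm{FPdim}(A_{\psi_-})=\dim(\mathcal{C})$ and $[\mathbbm{1},F(A_{\psi_-})]=\dim(\psi_-)=1$. Dividing by $m$ and using $|G|=ms$ together with $d_-^2=r^2-rd+s$ and $d^2=rd+s$, one computes the key identity
\begin{equation}
\mathrm{FPdim}(A_{\psi_-})=\frac{s+d^2}{s+d_-^2}=1+\frac{r}{s}\,d .
\end{equation}

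Now apply the forgetful functor $F:\mathcal{Z}(\mathcal{C})\to\mathcal{C}$. The object $F(A_{\psi_-})$ lives in $\mathcal{C}$, whose simple constituents have $\mathrm{FPdim}\in\{1,d\}$, so $\mathrm{FPdim}(F(A_{\psi_-}))=\mathrm{FPdim}(A_{\psi_-})$ is a nonnegative integer combination $a\cdot 1+b\cdot d$ with $a\geq[\mathbbm{1},F(A_{\psi_-})]=1$. When $d$ is irrational, $\{1,d\}$ are linearly independent over $\mathbb{Q}$, and matching coefficients in $1+(r/s)d=a+bd$ forces $b=r/s\in\mathbb{Z}_{\geq 0}$, i.e.\ $s\mid r$; this is impossible since $0<r<s-1<s$. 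Note that this argument uses no commutativity and in fact shows that \emph{no} two-orbit fusion ring with irrational $d$ and $0<r<|H|-1$ is categorifiable, which is more than the proposition claims.

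The main obstacle is the integral case $d\in\mathbb{Z}$, where $1$ and $d$ are $\mathbb{Q}$-dependent and the coefficient matching collapses; here commutativity is genuinely needed. The plan is to first use the refined datum $[\mathbbm{1},F(A_{\psi_-})]=1$: writing $F(A_{\psi_-})=\mathbbm{1}\oplus U\oplus N$ with $U$ a sum of nontrivial invertibles and $N$ a sum of noninvertibles, the identity $\mathrm{FPdim}(F(A_{\psi_-}))=1+(r/s)d$ and $r/s<1$ force $N=0$ and $s\mid rd$, which is not yet contradictory. Commutativity of $R$ (hence $G$ abelian) then supplies, for each character $\chi$ of $G/H$, a further one-dimensional representation supported on $B\setminus G$ in the spirit of Propositions \ref{triv} and \ref{reps}, each contributing its own simple of $\mathcal{Z}(\mathcal{C})$ and its own $F$-constraint. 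I expect the crux to be assembling these divisibility conditions --- tracking how the $\theta_R$-action on $\mathrm{Irr}(G/H)$ distributes the invertible content of the various $F(A_{\psi_-^{(\chi)}})$ --- into a single contradiction with $0<r<s-1$, rather than any one codegree estimate.
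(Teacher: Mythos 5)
Your argument splits into two cases and only completes one of them. The irrational case is fine: the construction of $\psi_-$, the identity $\mathrm{FPdim}(A_{\psi_-})=1+(r/s)d$, and the conclusion $s\mid r$ contradicting $0<r<s$ are all correct, and this is essentially the paper's Proposition \ref{prop1} specialized to the hypothesis at hand (you are right that no commutativity is needed there). But the integral case is a genuine possibility under the stated hypotheses --- Lemma \ref{lem1} only rules out $d\in\mathbb{Z}$ when $r\geq s$, and e.g.\ $r=1$, $s=|H_R|=6$, $d=3$ satisfies $0<r<|H_R|-1$ --- and for that case you offer only a plan (``I expect the crux to be assembling these divisibility conditions\dots''), not an argument. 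The divisibility $s\mid rd$ you do extract is genuinely not contradictory there, and it is not clear that accumulating one codegree constraint per character of $G/H$ ever closes the gap, since each such constraint degenerates in the same way once $1$ and $d$ are $\mathbb{Q}$-dependent.

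The paper's proof is uniform in $d$ and uses commutativity differently, in two places at once: (i) commutativity forces every $\psi\in\mathrm{Irr}(R)$ to be one-dimensional, so Lemma \ref{bound} gives $f_\psi\geq|G_R|$ for \emph{every} $\psi$, whence $\mathrm{FPdim}(A_\psi)\leq 2+(r/|H_R|)d<1+d$ (the last inequality being exactly where $r<|H_R|-1$, equivalently $d<|H_R|$, enters); since $[\mathbbm{1},F(A_\psi)]=\dim(\psi)=1$, no $F(A_\psi)$ can contain a simple of dimension $d$. (ii) The $A_\psi$ with multiplicities $\dim(\psi)$ exhaust the simple summands of $I(\mathbbm{1})$, so $F(I(\mathbbm{1}))=\bigoplus_Y Y\otimes Y^\ast$ would contain no noninvertible simples --- contradicting $r>0$. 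Your single distinguished representation $\psi_-$ cannot substitute for step (ii); the contradiction comes from controlling \emph{all} of $I(\mathbbm{1})$ simultaneously, which is what the blanket bound $f_\psi\geq |G_R|$ buys. I would recommend replacing your integral-case sketch with this argument (or simply running it for all $d$, which also subsumes your irrational case).
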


\begin{proof}
We will prove the the statement by contradiction.  Label $G:=G_R$ and $H:=H_R$.  Assume $R$ is commutative and categorifiable.  If $\psi\in\mathrm{Irr}(R)$, then $f_\psi\geq|G|$ by Lemma \ref{bound}, hence in the notation of \cite[Theorem 2.13]{ost15}, any categorification $\mathcal{C}$ of $R$ satisfies
\begin{align}
\mathrm{FPdim}(A_\psi)=\dfrac{\mathrm{FPdim}(R)}{f_\psi}\leq\dfrac{|G|+[G:H]d^2}{|G|}=1+\dfrac{d^2}{|H|}&=1+1+\dfrac{r}{|H|}d \\
&<2+d-\dfrac{d}{|H|}
\end{align}
where the last inequality follows from $r<|H|-1$.  We have $1+d-d/|H|\leq d$, thus $\mathrm{FPdim}(A_\psi)\leq1+d$ with equality if and only if $d=|H|$ which occurs if and only if $r=|H|-1$.   Therefore $\mathrm{FPdim}(A_\psi)<1+d$, and thus any categorification $\mathcal{C}$ of $R$ has $[X,F(A_\psi)]=0$ for all $X\in\mathcal{O}(\mathcal{C})$ with $\mathrm{FPdim}(X)=d$ since $[\mathbbm{1},F(A_\psi)]=\dim(\psi)=1$.  But this contradicts the fact that $F(I(\mathbbm{1}))$ contains at least one noninvertible summand from the assumption that $r>0$.
 
\end{proof}

\begin{proposition}\label{prop1}
Let $(R,B)$ be a fusion ring and $d\in\mathbb{R}\setminus\mathbb{Z}$ be a root of $x^2-rx-s$ for some $r\in\mathbb{Z}_{\geq0}$ and $s\in\mathbb{Z}_{\geq1}$ such that for all $x\in B$, $\mathrm{FPdim}(x)\in\{1,d\}$ .  If $R$ is categorifiable, then $s$ divides $r$.
\end{proposition}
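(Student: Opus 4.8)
The plan is to test categorifiability against the single Galois-conjugate character $\psi_-$ and extract an \emph{exact} Diophantine identity, improving on the inequality method of Proposition \ref{noncom}. First, since $d\notin\mathbb{Z}$, Lemma \ref{transitive} forces $|B/G_R|=2$; write $G:=G_R$ and $H:=H_R$. Fixing $x\in B\setminus G$ and decomposing $xx^\ast$, the invertible summands are exactly $\sum_{h\in H}h$ (by Lemma \ref{bigonea} and the identification of the common stabilizer with $H_R$), so applying the ring homomorphism $\mathrm{FPdim}$ and comparing the coefficients of $1$ and $d$ in the $\mathbb{Q}$-basis $\{1,d\}$ of $\mathbb{Q}(d)$ identifies $s=|H|$ and $r=\sum_{y\in B\setminus G}c_{x,x^\ast}^y$. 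In particular $|G|=[G:H]\,|H|=[G:H]s$, whence
\[
\mathrm{FPdim}(R)=|G|+[G:H]d^2=|G|+[G:H](rd+s)=2|G|+[G:H]rd.
\]

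Next I would bring in the category. Let $\mathcal{C}$ categorify $R$; by Lemma \ref{immediate} it is pseudounitary, so $\dim=\mathrm{FPdim}$ and $\dim(\mathcal{C})=\mathrm{FPdim}(R)$. Consider the $1$-dimensional character $\psi_-\in\mathrm{Irr}(R)$ with $\psi_-|_G=1$ and $\psi_-(x)=d_-$ for $x\in B\setminus G$, i.e.\ the Galois conjugate of $\mathrm{FPdim}$ under $d\mapsto d_-$. A direct computation of its formal codegree gives $f_{\psi_-}=|G|+[G:H]d_-^2=2|G|+[G:H]rd_-$. Applying \cite[Theorem 2.13]{ost15} produces $A:=A_{\psi_-}\in\mathcal{O}(\mathcal{Z}(\mathcal{C}))$ with $\dim(A)\,f_{\psi_-}=\dim(\mathcal{C})=\mathrm{FPdim}(R)$. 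The crucial observation is that $F(A)$ is an object of $\mathcal{C}$, all of whose simple summands have Frobenius-Perron dimension $1$ or $d$; since $F$ preserves dimension, $\dim(A)=\dim(F(A))=n_1+n_dd$ for some $n_1,n_d\in\mathbb{Z}_{\geq0}$.

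Finally I would substitute $\dim(A)=n_1+n_dd$ into the exact relation $(n_1+n_dd)\,f_{\psi_-}=\mathrm{FPdim}(R)$ and expand both sides in the basis $\{1,d\}$, using $d_-=r-d$ and $d^2=rd+s$. Equating rational parts and coefficients of $d$ yields the two integer equations $n_dm=c(n_1+1)$ and $m(n_1-1)=c(n_ds-n_1r)$, where $m=2|G|$ and $c=[G:H]r$. Substituting $m=2[G:H]s$ and cancelling $[G:H]$ reduces these to $2sn_d=r(n_1+1)$ and $2s(n_1-1)=r(n_ds-n_1r)$; eliminating $n_d$ collapses the second to $(n_1-1)(4s+r^2)=0$, forcing $n_1=1$, and then the first gives $r=sn_d$, i.e.\ $s\mid r$. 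The main obstacle is purely conceptual rather than computational: one must recognize that the correct test object is the Galois conjugate $\psi_-$ of the Frobenius-Perron character, so that $\mathrm{FPdim}(R)$ and $f_{\psi_-}$ are genuine conjugates in $\mathbb{Q}(d)$ and $\dim(A_{\psi_-})$ is pinned to $\mathbb{Z}[d]$. Only then does the exactness of $\dim(A_{\psi_-})f_{\psi_-}=\mathrm{FPdim}(R)$ (as opposed to a one-sided dimension bound) split into \emph{two} relations, which is precisely what is needed to determine both unknowns $n_1$ and $n_d$ and thereby the divisibility; a naive norm computation $n_1^2+n_1n_dr-n_d^2s=1$ alone is strictly weaker and does not suffice.
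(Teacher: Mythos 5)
Your proof is correct and takes essentially the same route as the paper: both identify $\sigma(\mathrm{FPdim}(R))$ as the formal codegree of the Galois-conjugate character $\psi_-$, invoke \cite[Theorem 2.13]{ost15} to obtain $A_{\psi_-}$ with $\dim(A_{\psi_-})\,f_{\psi_-}=\mathrm{FPdim}(R)$, and exploit the fact that $\dim(F(A_{\psi_-}))$ must lie in $\mathbb{Z}_{\geq0}+\mathbb{Z}_{\geq0}d$ because every simple object of $\mathcal{C}$ has dimension $1$ or $d$. The only difference is bookkeeping: the paper computes the quotient $\mathrm{FPdim}(R)/\sigma(\mathrm{FPdim}(R))$ directly as $1+(r/s)d$ and reads off $r/s\in\mathbb{Z}$ from the irrationality of $d$, whereas you expand the product into two Diophantine equations in $n_1,n_d$ and solve them, arriving at the same $n_1=1$, $n_d=r/s$.
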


\begin{proof}
The irrationality of $d$ implies $|B/G_R|=2$ by Lemma \ref{lem1}, so let $G:=G_R$ and $H:=H_R$.  Let $\sigma\in\mathrm{Gal}(\mathbb{Q}(d)/\mathbb{Q})$ be the nontrivial automorphism, so that $d+\sigma(d)=r$ and $d\sigma(d)=-s$.  We compute
\begin{align}
\dfrac{\mathrm{FPdim}(R)}{[G:H]s}&=\dfrac{|G|+[G:H]d^2}{-[G:H]d\sigma(d)}=1-\dfrac{d}{\sigma(d)}
\end{align}
\begin{align}
\dfrac{\sigma(\mathrm{FPdim}(R))}{[G:H]s}&=\dfrac{|G|+[G:H]\sigma(d^2)}{-[G:H]d\sigma(d)}=1-\dfrac{\sigma(d)}{d}.
\end{align}
Thus
\begin{equation}\label{lab}
\dfrac{\mathrm{FPdim}(R)}{\sigma(\mathrm{FPdim}(R))}=\dfrac{1-\dfrac{d}{\sigma(d)}}{1-\dfrac{\sigma(d)}{d}}=-\dfrac{d}{\sigma(d)}=\dfrac{d^2}{s}=1+\dfrac{r}{s}d.
\end{equation}
Now $\sigma(\mathrm{FPdim}(R))$ is a formal codegree of $R$ corresponding to some $\psi\in\mathrm{Irr}(R)$.  We compute in the notation of \cite[Theorem 2.13]{ost15} that $\mathrm{FPdim}(A_\psi)=\mathrm{FPdim}(\mathcal{C})/\sigma(\mathrm{FPdim}(\mathcal{C}))=1+(r/s)d$.  But $F(A_\psi)$ must decompose into invertible objects and simple objects of dimension $d$.  As $d$ is irrational, this implies $r/s\in\mathbb{Z}$.
\end{proof}

\par When $(R,B)$ is a categorifiable fusion ring with $|B/G_R|=2$ and $d$ is irrational, Proposition \ref{prop1} allows many computations to simplify since $d$ is a root of $x^2-k|H_R|x-|H_R|$ for some $k\in\mathbb{Z}_{\geq0}$, and
\begin{equation}
\mathrm{FPdim}(R)=|G_R|+[G_R:H_R]d^2=|G_R|(2+kd).
\end{equation}

\begin{proposition}
Let $(R,B)$ be a fusion ring with $|B/G_R|=2$ such that $d$ is irrational.  If $R$ is categorifiable, then $\dim(\psi)\leq[G_R:H_R]$ for all $\psi\in\mathrm{Irr}_H(G)$.
\end{proposition}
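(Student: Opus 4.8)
The plan is to combine the formal-codegree computation of Proposition~\ref{triv} with Ostrik's correspondence between formal codegrees and simple summands of $I(\mathbbm{1}_\mathcal{C})$, and then to play the multiplicity of a nontrivial invertible object inside $F(A_\psi)$ off against its multiplicity inside $F(I(\mathbbm{1}_\mathcal{C}))$. First I would dispose of the degenerate case: if $H:=H_R$ is trivial then $\mathrm{Irr}_H(G)$ is empty (every restriction to the trivial group is trivial), so the statement is vacuous and I may assume $H$ is nontrivial. Fixing $\psi\in\mathrm{Irr}_H(G)$, Proposition~\ref{triv} attaches to it $\tilde\psi\in\mathrm{Irr}_0(R)$ with $\dim(\tilde\psi)=\dim(\psi)$ and $f_{\tilde\psi}=|G|/\dim(\psi)$.

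Let $\mathcal{C}$ categorify $R$; it is pseudounitary by Lemma~\ref{immediate}, so categorical and Frobenius--Perron dimensions agree and $F$ preserves them. Since $d$ is irrational, Proposition~\ref{prop1} gives $\mathrm{FPdim}(R)=|G|(2+kd)$ for the relevant $k\in\mathbb{Z}_{\geq0}$, and Ostrik's theorem \cite[Theorem 2.13]{ost15} furnishes $A:=A_{\tilde\psi}\in\mathcal{O}(\mathcal{Z}(\mathcal{C}))$ occurring in $I(\mathbbm{1}_\mathcal{C})$ with multiplicity $[I(\mathbbm{1}_\mathcal{C}),A]=\dim(\psi)$, satisfying $\mathrm{FPdim}(A)=\mathrm{FPdim}(R)/f_{\tilde\psi}=\dim(\psi)(2+kd)$ and $[\mathbbm{1}_\mathcal{C},F(A)]=\dim(\psi)$. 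Writing $F(A)$ as a sum of invertibles (with total multiplicity $\sum_g a_g$) and $d$-dimensional simples, I would apply $\mathrm{FPdim}$ and use the $\mathbb{Q}$-linear independence of $1$ and $d$ to read off $\sum_g a_g=2\dim(\psi)$; subtracting the unit's contribution $a_{\mathbbm{1}}=\dim(\psi)$ leaves $\sum_{g\neq\mathbbm{1}}a_g=\dim(\psi)\geq1$ worth of nontrivial invertibles in $F(A)$.

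The heart of the argument is to locate these invertibles and bound their multiplicities. Using $F(I(\mathbbm{1}_\mathcal{C}))=\bigoplus_{Y}Y\otimes Y^\ast$ and rigidity, the multiplicity of an invertible $g$ in $F(I(\mathbbm{1}_\mathcal{C}))$ equals the number of simple $Y$ fixed by tensoring with $g$. For $g\notin H$ nothing is fixed (no invertible is fixed by a nontrivial $g$, and $H$ is the common stabilizer of the $d$-dimensional simples by Lemma~\ref{bigonea}(d)), so this multiplicity is $0$; for $g\in H\setminus\{\mathbbm{1}\}$ exactly the $[G:H]$ many $d$-dimensional simples are fixed, so it is $[G:H]$. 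Because $A$ is a summand of $I(\mathbbm{1}_\mathcal{C})$, the multiplicity of any $g$ in $F(A)$ is bounded by that in $F(I(\mathbbm{1}_\mathcal{C}))$; in particular $a_g=0$ for $g\notin H$, so the nontrivial invertibles detected above all lie in $H$. Choosing $g_0\in H\setminus\{\mathbbm{1}\}$ with $a_{g_0}\geq1$ and comparing multiplicities through the summand $A^{\oplus\dim(\psi)}$ of $I(\mathbbm{1}_\mathcal{C})$ yields
\begin{equation}
\dim(\psi)\leq\dim(\psi)\,[g_0,F(A)]\leq[g_0,F(I(\mathbbm{1}_\mathcal{C}))]=[G:H],
\end{equation}
which is the desired bound.

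I expect the main obstacle to be the bookkeeping that confines the nontrivial invertible summands of $F(A)$ to $H$ while certifying that at least one is present: this is precisely where all the hypotheses enter (irrationality of $d$, which forces the $\mathrm{FPdim}$-matching; nontriviality of $H$; and $H$ being the common normal stabilizer from Lemma~\ref{bigonea}), and it is what upgrades the soft dimension count into the sharp index bound $[G:H]$. Everything else is routine manipulation of the induction/restriction adjunction and of Frobenius--Perron dimensions.
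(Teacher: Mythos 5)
Your proposal is correct and follows essentially the same route as the paper: both extract $A_\psi$ from \cite[Theorem 2.13]{ost15} with $\mathrm{FPdim}(A_\psi)=\dim(\psi)(2+kd)$, use the irrationality of $d$ to force a nontrivial invertible summand of $F(A_\psi)$, and compare multiplicities through $A_\psi^{\oplus\dim(\psi)}\subseteq I(\mathbbm{1}_\mathcal{C})$ against $[g,F(I(\mathbbm{1}_\mathcal{C}))]=[G_R:H_R]$ for $g\in H_R$. The only additions are cosmetic (the vacuous case of trivial $H_R$ and the explicit confinement of the nontrivial invertibles to $H_R$, which the paper leaves implicit).
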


\begin{proof}
Each $\psi\in\mathrm{Irr}_{H_R}(G_R)$ corresponds to a $\psi\in\mathrm{Irr}(R)$ with $f_\psi=|G_R|/\dim(\psi)$ by Proposition \ref{triv}.  We compute
\begin{equation}
\dim(A_\psi)=\dfrac{\mathrm{FPdim}(R)}{f_\psi}=2\dim(\psi)+k\dim(\psi)d.
\end{equation}
We know $[F(A_\psi),\mathbbm{1}]=\dim(\psi)$ \cite[Theorem 2.13]{ost15}, hence $F(A_\psi)$ contains at least one nontrivial invertible summand as $d$ is irrational.  For all $g\in G_R$,
\begin{equation}[F(I(\mathbbm{1})),g]\geq[F(A_\psi),\mathbbm{1}][F(A_\psi),g]=\dim(\psi)[F(A_\psi),g].
\end{equation}
But
\begin{equation}
F(I(\mathbbm{1}))=|G_R|+[G_R:H_R]\bigoplus_{h\in H_R}h+Z
\end{equation}
where $Z$ is a sum of noninvertible simple objects.  Hence $[F(I(\mathbbm{1})),g]=[G_R:H_R]$ if $g\in H_R$ and $[F(I(\mathbbm{1})),g]=0$ otherwise.  Moreover $\dim(\psi)\leq[G_R:H_R]$.
\end{proof}

\begin{note}
In the case of near-group fusion categories, we observe the well-known fact that $d$ irrational implies $G_R=H_R$ is abelian since $[G_R:H_R]=1$.
\end{note}

Another useful property of modular tensor categories is a Galois action on the set of isomorphism classes of simple objects \cite{gannoncoste}.  If $\mathcal{C}$ is a modular tensor category, the Galois action on simple objects arises from a canonical bijection between the simple objects and the characters of the Grothendieck ring, whose values lie in a cyclotomic field $\mathbb{Q}(\zeta_N)$ where $N\in\mathbb{Z}_{\geq1}$ is the conductor of $\mathcal{C}$ \cite[Theorem 8.14.7]{tcat}.  In particular, if $X\in\mathcal{O}(\mathcal{C})$ for a modular tensor category $\mathcal{C}$, and $\sigma\in\mathrm{Gal}(\mathbb{Q}(\zeta_N)/\mathbb{Q})$, then we denote the Galois permutation associated with $\sigma$ as $\hat{\sigma}:\mathcal{O}(\mathcal{C})\to\mathcal{O}(\mathcal{C})$.  By considering the Galois orbit of $\mathbbm{1}_\mathcal{C}$ (assuming $\mathcal{C}$ is pseudounitary), one has
\begin{equation}\label{twentyseven}
\mathrm{FPdim}(X)^2=\dfrac{\mathrm{FPdim}(\mathcal{C})}{\sigma(\mathrm{FPdim}(\mathcal{C}))}\sigma(\mathrm{FPdim}(X)^2).
\end{equation}

\begin{lemma}\label{gal}
Let $(R,B)$ be a fusion ring with $|B/G_R|=2$ such that $d$ is irrational.  Assume $R$ is categorified by a fusion category $\mathcal{C}$, so that $d$ is a root of $x^2-k|H|x-|H|$ for some $k\in\mathbb{Z}_{\geq0}$ by Proposition \ref{prop1}.  If $\mathrm{FPdim}(Y)=a+bd$ for some $Y\in\mathcal{O}(\mathcal{Z}(\mathcal{C}))$ and $a,b\in\mathbb{Z}_{\geq0}$, then $\mathrm{FPdim}(\hat{\sigma}(Y))=a+(ak-b)d$.  In particular $0\leq b\leq ak$.
\end{lemma}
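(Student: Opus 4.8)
The plan is to apply the Galois relation (\ref{twentyseven}) to the modular tensor category $\mathcal{Z}(\mathcal{C})$ and read off Frobenius--Perron dimensions in the $\mathbb{Z}$-basis $\{1,d\}$. First I would assemble the ingredients. Since $\mathcal{C}$ is pseudounitary (Lemma \ref{immediate}) so is its double, and $\mathrm{FPdim}(\mathcal{Z}(\mathcal{C}))=\mathrm{FPdim}(\mathcal{C})^2=\mathrm{FPdim}(R)^2$. Take $\sigma$ to restrict to the nontrivial element of $\mathrm{Gal}(\mathbb{Q}(d)/\mathbb{Q})$; then $d+\sigma(d)=k|H|$ and $d\sigma(d)=-|H|$, so $\sigma(d)=-|H|/d<0$. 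The computation in the proof of Proposition \ref{prop1} (equation (\ref{lab})), specialized to $r=k|H|$ and $s=|H|$, gives $\mathrm{FPdim}(R)/\sigma(\mathrm{FPdim}(R))=1+kd=d^2/|H|$, which is positive because $\sigma(\mathrm{FPdim}(R))$ is again a formal codegree of $R$.

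Feeding this into (\ref{twentyseven}) for the object $\hat\sigma(Y)$ gives
\[
\mathrm{FPdim}(\hat\sigma(Y))^2=\frac{\mathrm{FPdim}(\mathcal{Z}(\mathcal{C}))}{\sigma(\mathrm{FPdim}(\mathcal{Z}(\mathcal{C})))}\,\sigma\bigl(\mathrm{FPdim}(Y)^2\bigr)=(1+kd)^2\,(a+b\sigma(d))^2 .
\]
The key algebraic identity, using $\sigma(d)=-|H|/d$ and $1+kd=d^2/|H|$, is
\[
(1+kd)(a+b\sigma(d))=\frac{d^2}{|H|}\Bigl(a-\frac{b|H|}{d}\Bigr)=\frac{ad^2}{|H|}-bd=a(1+kd)-bd=a+(ak-b)d ,
\]
so extracting the positive square root yields $\mathrm{FPdim}(\hat\sigma(Y))=(1+kd)\,|a+b\sigma(d)|=|a+(ak-b)d|$.

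It remains to remove the absolute value and extract the bound. For the latter I would apply the Frobenius--Perron theorem to the simple object $\hat\sigma(Y)$: its integral fusion matrix has $\mathrm{FPdim}(\hat\sigma(Y))$ as dominant eigenvalue, so every Galois conjugate of this number is also an eigenvalue, bounded above by it in absolute value. Since $\sigma(d)<0$, this maximality forces the coefficient of $d$ in $\mathrm{FPdim}(\hat\sigma(Y))$ to be nonnegative. Once one knows $\mathrm{FPdim}(\hat\sigma(Y))=a+(ak-b)d$ (the $a+b\sigma(d)\ge 0$ branch), this coefficient is $ak-b$, giving $ak-b\ge 0$, and together with $b\ge 0$ the asserted $0\le b\le ak$.

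The step I expect to be the main obstacle is exactly the removal of the absolute value, i.e.\ verifying that $\sigma(\mathrm{FPdim}(Y))=a+b\sigma(d)\ge 0$ so that we land on the branch $a+(ak-b)d$ rather than its negative $-a+(b-ak)d$. Because $\sigma(d)<0$ there is no automatic positivity, and the Frobenius--Perron nonnegativity of the $d$-coefficient alone is consistent with either branch (the alternative branch has $d$-free part $-a$). Establishing the correct branch should rest on the positivity of Frobenius--Perron dimensions of simple objects of $\mathcal{Z}(\mathcal{C})$ in the cone $\mathbb{Z}_{\ge0}+\mathbb{Z}_{\ge0}d$ --- and in particular on the rational part $a$ being nonzero --- which is where I expect the genuine content of the lemma to lie.
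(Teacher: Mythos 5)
Your proposal is correct and follows the same route as the paper: apply the Galois relation (\ref{twentyseven}) to $\mathcal{Z}(\mathcal{C})$ with $\sigma(d)=k|H|-d$ and compute in the basis $\{1,d\}$. You are in fact more careful than the paper on two points. First, you correctly use the ratio $\mathrm{FPdim}(\mathcal{Z}(\mathcal{C}))/\sigma(\mathrm{FPdim}(\mathcal{Z}(\mathcal{C})))=(1+kd)^2$; the paper's displayed intermediate step carries only a single factor $(1+kd)$, which does not yield $(a+(ak-b)d)^2$ --- your identity $(1+kd)(a+b\sigma(d))=a+(ak-b)d$ shows the square is needed, so the paper's proof contains a typo that your computation silently repairs. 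Second, the paper extracts the positive square root without comment, whereas you flag the branch ambiguity; your proposed resolution is the right one and is essentially complete as sketched: $\mathrm{FPdim}(\hat{\sigma}(Y))=a'+b'd$ with $a',b'\in\mathbb{Z}_{\geq0}$ because $F(\hat{\sigma}(Y))$ decomposes into simple objects of dimension $1$ and $d$, and since $\{1,d\}$ is a $\mathbb{Q}$-basis of $\mathbb{Q}(d)$, the relation $a'+b'd=\pm(a+(ak-b)d)$ forces the $+$ sign whenever $a>0$, as the $-$ sign would give $a'=-a<0$. Your instinct that the rational part must be nonzero is also vindicated: for the objects $Y_j$ of Section \ref{sec:label} with $F(Y_j)=b_j\rho$ one has $a=0$, one computes $(1+kd)\sigma(d)=-d$ so that $\hat{\sigma}$ preserves the dimension $b_jd$, and the stated formula together with the bound $b\leq ak$ genuinely fails; the lemma is only ever applied to objects with $a\in\{1,2\}$, so this is an unstated hypothesis of the lemma rather than a gap in your argument.
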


\begin{proof}
We have $\sigma(d)=k|H|-d$ and thus using Equations (\ref{lab}) and (\ref{twentyseven}),
\begin{align}
\mathrm{FPdim}(\hat{\sigma}(Y))^2=\dfrac{\mathrm{FPdim}(\mathcal{C})}{\sigma(\mathrm{FPdim}(\mathcal{C}))}\sigma(\mathrm{FPdim}(Y)^2)&=(1+kd)(a+b(k|H|-d))^2 \\
&=(a+(ak-b)d)^2.
\end{align}
\end{proof}


\subsection{Twists induced from invertible objects}\label{sec:twist}

Categorifiable two-orbit fusion rings are amenable to study for many reasons, including their simplified representation theory as shown in Section \ref{twoorbit}. Their categorifications are also approachable since a relatively large proportion of their simple objects are invertible.  It is well-known, for example, that the twists of the simple summands of $I(\mathbbm{1}_\mathcal{C})$ are trivial for any spherical fusion category $\mathcal{C}$ (see \cite[Theorem 2.5]{ost15} or \cite[Remark 4.6]{MR2313527}).  This observation is a specific instance of the following facts, generalized to the induction of arbitrary invertible objects.  Make particular note that we do not assume $\mathcal{C}$ is a pseudounitary fusion category in this section, so the categorical notion of dimension is used \cite[Section 2.1]{ENO} in lieu of Frobenius-Perron dimension.

\begin{lemma}\label{wan}
Let $\mathcal{C}$ be a pivotal fusion category and $X\in\mathcal{O}(\mathcal{C})$.  If $X^{\otimes n}=\mathbbm{1}_\mathcal{C}$ for some $n\in\mathbb{Z}_{\geq1}$, then $\nu_n(X)$ is a root of unity of order $n$.
\end{lemma}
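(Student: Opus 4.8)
The plan is to collapse the computation of $\nu_n(X)$ to a single scalar by exploiting the smallness of the relevant $\mathrm{Hom}$-space, and then to invoke the periodicity of the Frobenius--Schur rotation operator. First I would note that $X$ is automatically invertible: since $X^{\otimes n}\cong\mathbbm{1}_\mathcal{C}$ is simple, applying the ring homomorphism $\mathrm{FPdim}$ gives $\mathrm{FPdim}(X)^n=1$, and because $\mathrm{FPdim}(X)\geq 1$ we conclude $\mathrm{FPdim}(X)=1$. In particular the space $\mathrm{Hom}_\mathcal{C}(\mathbbm{1}_\mathcal{C},X^{\otimes n})\cong\mathrm{Hom}_\mathcal{C}(\mathbbm{1}_\mathcal{C},\mathbbm{1}_\mathcal{C})$ is $1$-dimensional.

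Next I would recall the intrinsic description of the higher indicator from \cite{MR2313527}: in a pivotal fusion category, $\nu_n(X)$ is the trace of the canonical cyclic rotation endomorphism $E_X$ of $\mathrm{Hom}_\mathcal{C}(\mathbbm{1}_\mathcal{C},X^{\otimes n})$, assembled from the pivotal structure together with the evaluation and coevaluation morphisms. Since this $\mathrm{Hom}$-space is $1$-dimensional, $E_X$ is multiplication by a single scalar, and that scalar is precisely $\nu_n(X)=\mathrm{Tr}(E_X)$. The decisive structural input is the theorem of Ng--Schauenburg that the rotation operator is $n$-periodic, i.e.\ $E_X^{\,n}=\mathrm{id}$ on $\mathrm{Hom}_\mathcal{C}(\mathbbm{1}_\mathcal{C},X^{\otimes n})$; rotating all $n$ tensor factors once around returns the identity in any pivotal category. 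Combined with the previous observation, this forces $\nu_n(X)^n=1$, so $\nu_n(X)$ lies in the group of $n$-th roots of unity, as claimed.

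The step I expect to be the main obstacle is correctly importing the order-$n$ periodicity of the rotation operator in the merely \emph{pivotal} setting (sphericity is not assumed here), since once the $\mathrm{Hom}$-space is seen to be $1$-dimensional the remainder is formal. As a backup that sidesteps black-box use of \cite{MR2313527}, I could instead restrict attention to the pointed fusion subcategory generated by $X$, identify it with $\mathrm{Vec}_{\langle X\rangle}^{\omega}$ for a $3$-cocycle $\omega$ on the cyclic group generated by $X$, and compute $\nu_n(X)$ directly as an explicit product of associator scalars; this exhibits $\nu_n(X)$ as a root of unity by inspection, at the cost of cocycle bookkeeping. (Note that the order of $\nu_n(X)$ genuinely divides $n$ rather than always equalling it, as the case $X=\mathbbm{1}_\mathcal{C}$ already shows, so I read the statement as asserting membership in $\mu_n$.)
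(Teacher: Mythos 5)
Your proposal is correct and is essentially the paper's own argument: both identify $\mathrm{Hom}_\mathcal{C}(\mathbbm{1}_\mathcal{C},X^{\otimes n})$ with the one-dimensional space $\mathrm{Hom}_\mathcal{C}(\mathbbm{1}_\mathcal{C},\mathbbm{1}_\mathcal{C})$, observe that the rotation operator $E_X^{(n)}$ is therefore a scalar $\lambda$ whose trace is $\nu_n(X)$, and invoke the Ng--Schauenburg periodicity $(E_X^{(n)})^n=\mathrm{id}$ (which does hold in the merely pivotal setting, per the same reference the paper cites) to conclude $\nu_n(X)^n=\lambda^n=1$. Your closing remark that the statement should be read as $\nu_n(X)\in\mu_n$ rather than having exact order $n$ matches what the paper's proof actually establishes.
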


\begin{proof}
The Frobenius-Schur indicator $\nu_n(X)$ is defined \cite[Definition 3.1]{MR2381536} as the trace of an endomorphism $E_X^{(n)}$ of $\mathrm{Hom}_\mathcal{C}(\mathbbm{1}_\mathcal{C},X^{\otimes m})=\mathrm{Hom}_\mathcal{C}(\mathbbm{1}_\mathcal{C},\mathbbm{1}_\mathcal{C})=\mathbb{C}$.  In particular $E_X^{(n)}=\lambda\cdot\mathrm{id}_\mathbb{C}$ for some $\lambda\in\mathbb{C}$.   But \cite[Theorem 5.1]{MR2381536} states that
\begin{equation}
\mathrm{id}_\mathbb{C}=(E_X^{(n)})^n=\lambda^n\cdot\mathrm{id}_\mathbb{C}.
\end{equation}
Therefore $\lambda^n=1$ and moreover $\nu_n(X)^n=(\mathrm{Tr}(\lambda\cdot\mathrm{id}_\mathbb{C}))^n=\lambda^n=1$.
\end{proof}

\begin{proposition}\label{thelemma}
Let $\mathcal{C}$ be a spherical fusion category and $X\in\mathcal{O}(\mathcal{C})$.  If $X^{\otimes n}=\mathbbm{1}_\mathcal{C}$ for some $n\in\mathbb{Z}_{\geq1}$, then $\theta^n_Y=\theta_{Y'}^n$ for all simple summands $Y,Y'\subset I(X)$ in $\mathcal{Z}(\mathcal{C})$.
\end{proposition}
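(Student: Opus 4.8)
The plan is to route everything through the Frobenius--Schur indicator $\nu_n(X)$, which is available precisely because $X^{\otimes n}=\mathbbm{1}_\mathcal{C}$ makes $\mathrm{Hom}_\mathcal{C}(\mathbbm{1}_\mathcal{C},X^{\otimes n})$ one-dimensional. By Lemma \ref{wan}, $\nu_n(X)$ is a root of unity with $\nu_n(X)^n=1$; this scalar should turn out to be (up to the fixed factor $\dim(X)=\pm1$) the common value of $\theta_Y^n$ whose $Y$-independence is the content of the proposition. The engine is the standard correspondence between higher indicators of $\mathcal{C}$ and the twists of its double: each simple summand $Y\subset I(X)$ contributes its twist $\theta_Y$ to the spectral data governing $\nu_n(X)$ through an expression of the form $\nu_n(X)=\frac{1}{\dim(\mathcal{C})}\sum_{Y}[I(X),Y]\dim(Y)\theta_Y^{n}$, and the goal is to upgrade this weighted average to the pointwise assertion that $\theta_Y^{n}$ does not depend on $Y$.

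First I would record the structural facts I intend to use: the formula $F(I(X))=\bigoplus_{Y}Y\otimes X\otimes Y^\ast$, the adjunction identity $[I(X),Y]=[X,F(Y)]$, the dimension computation $\dim(I(X))=\dim(F(I(X)))=\dim(X)\dim(\mathcal{C})$, and the base case \cite[Theorem 2.5]{ost15} that every simple summand of $A:=I(\mathbbm{1}_\mathcal{C})$ carries trivial twist. The conceptual mechanism I would then exploit is that $I(X)$ is a module over the connected étale algebra $A$ in the modular category $\mathcal{Z}(\mathcal{C})$, and that under the identification of $A$-modules with $\mathcal{C}$ the relative tensor product satisfies $I(X)^{\otimes_A m}\cong I(X^{\otimes m})$; taking $m=n$ yields $I(X)^{\otimes_A n}\cong A$, whose twist is the identity. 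Since $\theta$ is a tensor-natural automorphism of the identity functor and $\theta_A=\mathrm{id}_A$, forming the $n$-fold relative tensor power collapses the individual phases of the summands of $I(X)$, so that the only twist datum surviving in $I(X)^{\otimes_A n}\cong A$ is the common $n$-th power, which must match the trivial value carried by $A$; this forces $\theta_Y^{n}$ to be the same root of unity for every simple $Y\subset I(X)$.

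The main obstacle is making this last step rigorous, since $I(X)$ is \emph{not} a local $A$-module once $X\not\cong\mathbbm{1}_\mathcal{C}$, so $\theta_{I(X)}$ fails to be an $A$-module endomorphism and the twist does not descend naively to $\otimes_A$. The correct bookkeeping is that $\theta_M$ differs from an $A$-module map by the monodromy of $M$ with $A$, and this monodromy ``charge'' is the $n$-torsion datum that is annihilated exactly upon passing to $I(X)^{\otimes_A n}\cong A$; verifying this compatibility and its interaction with the spherical ribbon structure is the delicate point. As a cross-check that stays inside the indicator formalism, I would confirm the value directly: combining the weighted-sum expression with $\dim(I(X))=\dim(X)\dim(\mathcal{C})$ shows that the common value, \emph{if} it exists, must equal $\nu_n(X)/\dim(X)$, which by Lemma \ref{wan} is a root of unity. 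This both pins down the constant and provides a consistency test for the $Y$-independence that the proposition asserts.
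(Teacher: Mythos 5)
There is a genuine gap here, and you have essentially flagged it yourself. The mechanism you propose --- viewing $I(X)$ as a module over the \'etale algebra $A=I(\mathbbm{1}_\mathcal{C})$ and passing to the relative tensor power $I(X)^{\otimes_A n}\cong A$ --- does not deliver the conclusion. The relative tensor product is a quotient of $I(X)^{\otimes n}$, so knowing that the object $A$ surviving that quotient has trivial twist places no constraint on the twists of summands of $I(X)$ whose contributions are killed by the quotient maps; and since $I(X)$ is not a local $A$-module, $\theta_{I(X)}$ is not an $A$-module map, as you concede. Your proposed repair (``the monodromy charge is the $n$-torsion datum annihilated exactly upon passing to $I(X)^{\otimes_A n}$'') is a slogan rather than an argument: nothing in it explains why each individual $\theta_Y^{n}$, rather than some weighted average of them, is pinned down by $\theta_A=\mathrm{id}_A$. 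Your closing ``cross-check'' only computes what the common value would be \emph{if} the $\theta_Y^{n}$ were already known to coincide, which is precisely the statement to be proved.

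The frustrating part is that every ingredient of the actual proof is already in your first paragraph. The paper's argument is a short equality-case-of-the-triangle-inequality computation: by \cite[Theorem 5.1]{MR2313527} one has $\dim(\mathcal{C})\nu_n(X)=\mathrm{Tr}(\theta_{I(X)}^n)=\sum_{Y}\dim(Y)[Y,I(X)]\theta_Y^n$, and Lemma \ref{wan} gives $|\nu_n(X)|=1$, so
\begin{equation*}
\dim(\mathcal{C})=\Bigl|\sum_{Y}\dim(Y)[Y,I(X)]\theta_Y^n\Bigr|\leq\sum_{Y}\dim(Y)[Y,I(X)]=\dim(I(X))=\dim(X)\dim(\mathcal{C})=\dim(\mathcal{C}),
\end{equation*}
and equality in the triangle inequality forces all the unit complex numbers $\theta_Y^n$ occurring with nonzero weight to have the same argument, i.e.\ to be equal. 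No \'etale-algebra or relative-tensor-product machinery is needed; replace your second and third paragraphs with this equality-case argument.
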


\begin{proof}
Using \cite[Theorem 5.1]{MR2313527}, $\dim(\mathcal{C})\nu_n(X)=\mathrm{Tr}(\theta_{I(X)}^n)$.  Hence Lemma \ref{wan} implies
\begin{align}
\dim(\mathcal{C})=|\dim(\mathcal{C})\nu_n(X)|&=\left|\sum_{Y\in\mathcal{O}(\mathcal{Z}(\mathcal{C}))}\dim(Y)[Y,I(X)]\theta^n_Y\right| \\
&\leq\sum_{Y\in\mathcal{O}(\mathcal{Z}(\mathcal{C}))}\dim(Y)[Y,I(X)] \\
&=\dim(I(X))=\dim(X)\dim(\mathcal{C})=\dim(\mathcal{C})
\end{align}
by the triangle inequality, where the last line follows from \cite[Proposition 5.4]{ENO}.  Equality exists if and only if the summands $\dim(Y)[Y,I(X)]\theta_Y^n$ over all $Y\in\mathcal{O}(\mathcal{Z}(\mathcal{C}))$ have the same argument, i.e.\ $\theta_Y^n=\theta_{Y'}^n$ for all simple $Y,Y'\subset I(X)$.
\end{proof}

\begin{note}
The fact that for a spherical fusion category $\mathcal{C}$, $\theta_Y=1$ for all simple $Y\subset I(\mathbbm{1}_\mathcal{C})$ is a trivial consequence since $\mathbbm{1}_{\mathcal{Z}(\mathcal{C})}\subset I(\mathbbm{1}_\mathcal{C})$ and $\theta_{\mathbbm{1}_{\mathcal{Z}(\mathcal{C})}}=1$.
\end{note}

\begin{corollary}\label{negcor}
Let $\mathcal{C}$ be a spherical fusion category and $X\in\mathcal{O}(\mathcal{C})$ such that $X^{\otimes2}\cong\mathbbm{1}_\mathcal{C}$.  Then $\theta_Y=\pm1$ for all $Y\subset I(X)$ if $\nu_2(X)=1$, or $\theta_Y=\pm i$ for all $Y\subset I(X)$ if $\nu_2(X)=-1$.
\end{corollary}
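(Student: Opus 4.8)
The plan is to specialize Proposition \ref{thelemma} to $n=2$ and then use the second Frobenius--Schur indicator to pin down the one remaining ambiguity, namely the precise value of the common square of the twists. First I would apply Proposition \ref{thelemma} with $n=2$: since $X^{\otimes2}\cong\mathbbm{1}_\mathcal{C}$, every pair of simple summands $Y,Y'\subset I(X)$ satisfies $\theta_Y^2=\theta_{Y'}^2$. Writing $c$ for this common value, $c$ is a root of unity because each $\theta_Y$ is. By Lemma \ref{wan} applied to $n=2$, $\nu_2(X)$ is a root of unity of order dividing $2$, so $\nu_2(X)=\pm1$. The entire statement then reduces to the single identity $c=\nu_2(X)$: if $\nu_2(X)=1$ this forces $\theta_Y^2=1$, i.e.\ $\theta_Y=\pm1$, and if $\nu_2(X)=-1$ it forces $\theta_Y^2=-1$, i.e.\ $\theta_Y=\pm i$.

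To obtain $c=\nu_2(X)$ I would reuse the trace identity underlying Proposition \ref{thelemma}. By \cite[Theorem 5.1]{MR2313527}, $\dim(\mathcal{C})\,\nu_2(X)=\mathrm{Tr}(\theta_{I(X)}^2)=\sum_{Y\in\mathcal{O}(\mathcal{Z}(\mathcal{C}))}\dim(Y)[Y,I(X)]\,\theta_Y^2$. Because $\theta_Y^2=c$ is constant across the summands of $I(X)$, the right-hand side factors as $c\sum_Y\dim(Y)[Y,I(X)]=c\,\dim(I(X))$, and \cite[Proposition 5.4]{ENO} gives $\dim(I(X))=\dim(X)\dim(\mathcal{C})=\dim(\mathcal{C})$ since $X$ is invertible with $\dim(X)=1$. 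Cancelling the nonzero scalar $\dim(\mathcal{C})$ yields $\nu_2(X)=c$, as required. Note that, in contrast to Proposition \ref{thelemma}, no triangle inequality is needed at this stage: once $\theta_Y^2$ is known to be constant, the indicator evaluates $c$ exactly rather than merely bounding it.

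The delicate point is the normalization $\dim(X)=1$, which is exactly what converts the abstract constancy of $\theta_Y^2$ into the sharp dichotomy $\pm1$ versus $\pm i$; this holds in the pseudounitary categorifications relevant to us by Lemma \ref{immediate}. I would expect the main conceptual step to be recognizing that the equality case already isolated in Proposition \ref{thelemma} can be upgraded from an inequality into an exact evaluation: instead of only controlling $|\mathrm{Tr}(\theta_{I(X)}^2)|$, one computes the trace itself and reads off $c=\nu_2(X)$. Everything else is a direct case split on the two possible values of $\nu_2(X)$.
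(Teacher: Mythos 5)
Your proposal is correct and follows essentially the same route as the paper: apply Proposition \ref{thelemma} with $n=2$ to get a common value $\theta_Y^2=\zeta$ across the summands of $I(X)$, then evaluate $\mathrm{Tr}(\theta_{I(X)}^2)=\nu_2(X)\dim(\mathcal{C})$ exactly to conclude $\zeta=\nu_2(X)$. The only cosmetic difference is that the paper obtains $\nu_2(X)=\pm1$ from the self-duality of $X$ via \cite[Theorem 2.7]{ost15} rather than from Lemma \ref{wan}, but both yield the same dichotomy.
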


\begin{proof}
We have by \cite[Theorem 5.1]{MR2313527},
\begin{equation}
\nu_2(X)\dim(\mathcal{C})=\mathrm{Tr}(\theta_{I(X)}^2)=\sum_{Y\subset I(X)}\dim(Y)\theta_Y^2=\zeta\sum_{Y\subset I(X)}\dim(Y)
\end{equation}
for some root of unity $\zeta$ by Proposition \ref{thelemma}.  When $X\cong X^\ast$, we have $\nu_2(X)=\pm1$ by \cite[Theorem 2.7]{ost15}.  As $\dim(Y)\in\mathbb{R}$ for all $Y\subset I(X)$, $\zeta=\theta_Y^2=1$ for all $Y\subset I(X)$ when $\nu_2(X)=1$, and $\zeta=\theta_Y^2=-1$ for all $Y\subset I(X)$ when $\nu_2(X)=-1$.
\end{proof}

\begin{proposition}\label{theprop}
Let $\mathcal{C}$ be a spherical fusion category and $X\in\mathcal{O}(\mathcal{C})$ such that $X^{\otimes n}=\mathbbm{1}_\mathcal{C}$ for some $n\in\mathbb{Z}_{\geq1}$.  If the left or right $\otimes$-action of $X$ on $\mathcal{O}(\mathcal{C})$ has a fixed-point, then $\theta_Y$ is an $n$th root of unity for all simple $Y\subset I(X)$.
\end{proposition}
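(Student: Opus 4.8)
The plan is to reduce the statement to locating a single well-behaved summand of $I(X)$ and then let Proposition \ref{thelemma} do the propagation. By Proposition \ref{thelemma}, the quantities $\theta_Y^n$ agree on a common value as $Y$ ranges over the simple summands of $I(X)$; consequently it suffices to exhibit even one simple summand $Y_0\subset I(X)$ with $\theta_{Y_0}^n=1$. I would produce such a $Y_0$ by showing that $I(X)$ and $I(\mathbbm{1}_\mathcal{C})$ share a common simple summand, since every simple summand of $I(\mathbbm{1}_\mathcal{C})$ has trivial twist (the Note following Proposition \ref{thelemma}, as $\mathbbm{1}_{\mathcal{Z}(\mathcal{C})}\subset I(\mathbbm{1}_\mathcal{C})$ and $\theta_{\mathbbm{1}_{\mathcal{Z}(\mathcal{C})}}=1$).

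The heart of the matter is to convert the fixed-point hypothesis into this overlap. Suppose first that the left $\otimes$-action of $X$ has a fixed point $Z\in\mathcal{O}(\mathcal{C})$, so that $X\otimes Z\cong Z$. Using the induction–restriction adjunction $\mathrm{Hom}_{\mathcal{Z}(\mathcal{C})}(I(X),I(\mathbbm{1}_\mathcal{C}))=\mathrm{Hom}_\mathcal{C}(X,F(I(\mathbbm{1}_\mathcal{C})))$ together with $F(I(\mathbbm{1}_\mathcal{C}))=\bigoplus_{W\in\mathcal{O}(\mathcal{C})}W\otimes W^\ast$ from \cite[Proposition 5.4]{ENO}, I would compute
\begin{equation}
[I(X),I(\mathbbm{1}_\mathcal{C})]=\sum_{W\in\mathcal{O}(\mathcal{C})}[X,W\otimes W^\ast]=\sum_{W\in\mathcal{O}(\mathcal{C})}[X\otimes W,W].
\end{equation}
Every summand is nonnegative, and the $W=Z$ term equals $[X\otimes Z,Z]=1$ because $X\otimes Z\cong Z$ is simple. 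Hence $\mathrm{Hom}_{\mathcal{Z}(\mathcal{C})}(I(X),I(\mathbbm{1}_\mathcal{C}))\neq0$, and semisimplicity of $\mathcal{Z}(\mathcal{C})$ forces a common simple summand $Y_0$ of $I(X)$ and $I(\mathbbm{1}_\mathcal{C})$. If instead the right action has the fixed point $Z$ (so $Z\otimes X\cong Z$), the identical computation singles out the summand $W=Z^\ast$, for which $[X,Z^\ast\otimes Z]=[Z\otimes X,Z]=1$ by the analogous rigidity adjunction, yielding the same conclusion.

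With $Y_0$ in hand the argument closes immediately: $Y_0\subset I(\mathbbm{1}_\mathcal{C})$ gives $\theta_{Y_0}=1$, whereas $Y_0\subset I(X)$ gives $\theta_Y^n=\theta_{Y_0}^n=1$ for \emph{every} simple $Y\subset I(X)$ by Proposition \ref{thelemma}, which is exactly the claim. I expect the only genuine obstacle to be the conceptual step of recognizing that a fixed point of $X$ manufactures a nonzero morphism between $I(X)$ and $I(\mathbbm{1}_\mathcal{C})$; once this is seen, the remainder is the adjunction and the already-established triviality of the twists on $I(\mathbbm{1}_\mathcal{C})$, requiring neither Frobenius–Schur indicators nor pseudounitarity in this final step. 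The only points warranting a careful check are the rigidity manipulation $[X,W\otimes W^\ast]=[X\otimes W,W]$ and the correct use of $Z^{\ast\ast}\cong Z$ in the right-action case, both of which are routine.
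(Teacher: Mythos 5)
Your proposal is correct and follows essentially the same route as the paper: both arguments use the fixed point to show $[I(X),I(\mathbbm{1}_\mathcal{C})]>0$, extract a common simple summand of trivial twist, and propagate via Proposition \ref{thelemma}. The only cosmetic difference is that you evaluate this Hom space as $[X,F(I(\mathbbm{1}_\mathcal{C}))]$ while the paper evaluates it as $[\mathbbm{1}_\mathcal{C},F(I(X))]$, which are identified by the same adjunction.
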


\begin{proof}
Recall \cite[Proposition 5.4]{ENO} that
\begin{equation}
F(I(X))=\bigoplus_{Z\in\mathcal{O}(\mathcal{C})}Z\otimes X\otimes Z^\ast.
\end{equation}
If $Z\otimes X=Z$ or $X\otimes Z^\ast=Z^\ast$ for some $Z\in\mathcal{O}(\mathcal{C})$, then $\mathbbm{1}_\mathcal{C}\subset Z\otimes Z^\ast\subset F(I(X))$.  Therefore $0<[\mathbbm{1}_\mathcal{C},F(I(X))]=[I(\mathbbm{1}_\mathcal{C}),I(X)]$.  Moreover there exists simple $Y\subset I(X)\cap I(\mathbbm{1}_\mathcal{C})$ such that $\theta_Y=1$.  The result then follows from Proposition \ref{thelemma}.
\end{proof}

\begin{note}
Proposition \ref{theprop} fails for some fixed-point free invertible objects.  Consider either pointed modular tensor category $\mathcal{C}$ of rank 2, with nontrivial simple object $X$.  The $\otimes$-action of $X$ on $\mathcal{O}(\mathcal{C})$ is fixed-point free and $I(X)=Y_1+Y_2$ where $Y_1$ and $Y_2$ are invertible objects with twists $i$ and $-i$.  These are fourth roots of unity while $X^{\otimes2}=\mathbbm{1}_\mathcal{C}$.
\end{note}

\begin{corollary}
Let $\mathcal{C}$ be a spherical fusion category and $X\in\mathcal{O}(\mathcal{C})$.  If $X^{\otimes2}\cong\mathbbm{1}_\mathcal{C}$ and $\nu_2(X)=-1$, then the action $X\otimes -$ (or $-\otimes X$) on $\mathcal{O}(\mathcal{C})$ is fixed-point free.
\end{corollary}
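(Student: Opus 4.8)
The plan is to argue by contraposition, simply combining Corollary \ref{negcor} with Proposition \ref{theprop}; no new computation is required. First I would record the two relevant conclusions in the case at hand. Since $X^{\otimes2}\cong\mathbbm{1}_\mathcal{C}$ and $\nu_2(X)=-1$, Corollary \ref{negcor} gives immediately that every simple summand $Y\subset I(X)$ has twist $\theta_Y=\pm i$. Note also that $I(X)$ is nonzero: by \cite[Proposition 5.4]{ENO}, $F(I(X))=\bigoplus_{Z\in\mathcal{O}(\mathcal{C})}Z\otimes X\otimes Z^\ast$ contains $X$ as the summand indexed by $Z=\mathbbm{1}_\mathcal{C}$, so $\dim(I(X))=\dim(X)\dim(\mathcal{C})\neq0$ and the set of simple $Y\subset I(X)$ is nonempty.

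Next, suppose toward a contradiction that the $\otimes$-action of $X$ on $\mathcal{O}(\mathcal{C})$ (on either side) has a fixed point. Then Proposition \ref{theprop}, applied with $n=2$, forces $\theta_Y$ to be a second root of unity, that is $\theta_Y=\pm1$, for every simple $Y\subset I(X)$. Comparing this with the previous paragraph produces the contradiction: the sets $\{\pm i\}$ and $\{\pm1\}$ are disjoint, yet each nonempty simple summand $Y\subset I(X)$ would be forced to have $\theta_Y$ lying in both. Hence no fixed point can exist, and the action is fixed-point free; the parenthetical ``left or right'' versions follow identically since both Corollary \ref{negcor} and Proposition \ref{theprop} are stated symmetrically in the two $\otimes$-actions.

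There is essentially no genuine obstacle here, only one point that must be verified with care: the force of the argument depends entirely on Corollary \ref{negcor} delivering \emph{primitive} fourth roots of unity when $\nu_2(X)=-1$, so that its conclusion is truly incompatible with the ``$n$th (here second) root of unity'' conclusion of Proposition \ref{theprop}. Since $X\cong X^\ast$ gives $\nu_2(X)=\pm1$ and the hypothesis pins this to $-1$, Corollary \ref{negcor} does rule out $\theta_Y=\pm1$ outright, so the two statements collide exactly as needed. I would therefore expect the write-up to be only a few lines.
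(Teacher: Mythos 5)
Your argument is correct and is exactly the paper's proof, which simply cites Corollary \ref{negcor} together with the contrapositive of Proposition \ref{theprop}; you have just spelled out the incompatibility of $\{\pm i\}$ with $\{\pm 1\}$ explicitly. No further comment is needed.
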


\begin{proof}
This follows from Corollary \ref{negcor} and the contrapositive of Proposition \ref{theprop}.
\end{proof}


\section{Near-group fusion categories}\label{sec:near}

Our terminal section is an application of most all of the previous results to produce bounds for the levels $\ell\in\mathbb{Z}_{\geq0}$ such that the near-group fusion rings $R(G,\ell)$ are categorifiable.

\subsection{Labelling simple objects of the double with $G$ abelian}\label{sec:label}

\par Let $\mathcal{C}$ be a near-group fusion category, i.e.\ a categorification of $R(G,\ell)$ for a finite group $G$ and level $\ell\in\mathbb{Z}_{\geq0}$, which we may assume is pseudounitary without loss of generality by Lemma \ref{immediate}.   We also assume $G$ is abelian, as nonabelian $G$ already have a level bound ($\ell<|G|$) by \cite[Theorem A.6]{ost15}, and a complete classification of categorifiable levels if one demands the categorifications be unitary \cite[Theorem 6.1]{MR3635673}.
\par There are three classes of simple objects of $\mathcal{Z}(\mathcal{C})$ which can be discovered via the induction functor $I:\mathcal{C}\to\mathcal{Z}(\mathcal{C})$, adjoint to the forgetful functor $F:\mathcal{Z}(\mathcal{C})\to\mathcal{C}$ \cite[Section 5.8]{ENO}.  One computes via \cite[Theorem 5.4]{ENO} for all $g\in G$,
\begin{equation}\label{invertible}
F(I(g))=|G|g\oplus \ell\rho\oplus\bigoplus_{g\in G}g
\end{equation}
where $\rho$ is (the unique, up to isomorphism) noninvertible.  As a consequence of the adjointess of $F$ and $I$, $0<[Y,I(g)]=[F(Y),g]$.  Now let $Y\subset I(g)$ be simple such that $h\subset F(Y)$ for $h\neq g$.  At least one such simple summand exists for all $h\neq g$ due to (\ref{invertible}).  We must have $[F(Y),h]=1$ since $[F(I(g)),h]=1$.  Therefore $1=[g,F(Y)]=[I(g),Y]$ on the nose, or else $[F(I(g)),h]>1$.  In other words, for all $g\neq h$, there exists a unique $Y_{g,h}\in\mathcal{O}(\mathcal{Z}(\mathcal{C}))$ such that
\begin{equation}
F(Y_{g,h})=g\oplus h\oplus b_{g,h}\rho
\end{equation}
for some $b_{g,h}\in\mathbb{Z}_{\geq0}$.  Furthermore, for each $g\in G$, the invertible summands of $F(I(g))$ are all consumed by $F(Y_{g,h})$ for $g\neq h$, except for exactly two copies of $g$.  In general, if $Y\subset I(g)$ is simple and $[g,F(Y)]=b$ for some positive integer $b$, then $[Y,I(g)]=b$, which accounts for $b^2$ copies of $g$ in $F(I(g))$.  As only two copies of $g$ are unaccounted for outside of $Y_{g,h}$ each remaining simple summand of $I(g)$ contains exactly one copy of $g$ under the forgetful functor.  Denote these two simple objects by $Y_g$ and $Y_g'$ such that
\begin{equation}
F(Y_g)=g\oplus b_g\rho,\qquad\text{ and }\qquad F(Y_g')=g\oplus b_g'\rho
\end{equation}
for some $b_g,b_g'\in\mathbb{Z}_{\geq0}$.  We have thus shown each $I(g)$ has exactly $|G|+1$ simple summands, which are pairwise nonisomorphic, i.e.\
\begin{equation}
I(g)\cong Y_g\oplus Y_g'\oplus\bigoplus_{h\neq g}Y_{g,h}.
\end{equation}
In sum, there are $2|G|$ simple objects of $\mathcal{Z}(\mathcal{C})$ of the form $Y_g$ or $Y_g'$ for $g\in G$, and $(1/2)|G|(|G|-1)$ simple objects of the form $Y_{g,h}$ for distinct $g,h\in G$, giving a total of $(1/2)|G|(|G|+3)$ distinct isomorphism classes of simple objects as summands across $I(g)$ for $g\in G$.

\par Lastly, let $J$ be an indexing set for the remaining simple objects $Y_j\in\mathcal{O}(\mathcal{Z}(\mathcal{C}))$, each satisfying $F(Y_j)=b_j\rho$ for some $b_j\in\mathbb{Z}_{\geq1}$ since $[Y_j,I(g)]=0$ for all $g\in G$.  With this labelling,
\begin{equation}
I(\rho)=\left(\bigoplus_{g\in G}(b_g^2Y_g\oplus b_g'^2Y_g'^2)\right)\oplus\left(\bigoplus_{g\neq h}b_{g,h}^2Y_{g,h}\right)\oplus\left(\bigoplus_{j\in J}b_j^2Y_j\right).\label{eq:weak0}
\end{equation}
On the other hand,
\begin{align}
F(I(\rho))=(2|G|+\ell^2)\rho\oplus \ell\bigoplus_{g\in G}g.\label{eq:weak}
\end{align}
Hence the sums of squares of the coefficients $b_g$ and $b_g'$, $b_{g,h}$, and $b_j$, are subject to numerical constraints by Equations (\ref{eq:weak0}) and (\ref{eq:weak}).

\begin{lemma}\label{fortoo}
Let $G$ be a finite abelian group of order $n\in\mathbb{Z}_{\geq1}$, and let $k\in\mathbb{Z}_{\geq1}$.  If $R(G,kn)$ is categorifiable, then
\begin{equation}
\sum_{j\in J}b_j^2\leq\dfrac{1}{2}k^2(n+1)(n-1)+2n.
\end{equation}
\end{lemma}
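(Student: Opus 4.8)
The plan is to turn the assertion into a single lower bound on $A+C$, where $A:=\sum_{g\in G}(b_g^2+b_g'^2)$ and $C:=\sum_{\{g,h\}}b_{g,h}^2$ (the sum over unordered pairs of distinct elements of $G$), and then to produce the extra slack that a naive estimate misses. Applying $F$ to the decomposition (\ref{eq:weak0}) of $I(\rho)$ and comparing the multiplicity of $\rho$ with (\ref{eq:weak}) — equivalently, using $[I(\rho),Y]=[\rho,F(Y)]$ — gives $A+C+\sum_{j\in J}b_j^2=2n+\ell^2$. Since $\ell=kn$ and $n=|G|$, this rearranges to $\sum_{j\in J}b_j^2=2n+k^2n^2-(A+C)$, so the claimed inequality is \emph{equivalent} to $A+C\ge\tfrac12 k^2(n^2+1)$. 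The identical computation applied to $F(I(g))$ with (\ref{invertible}) yields, for every $g\in G$, the linear relation $(\dagger_g)$: $b_g+b_g'+\sum_{h\ne g}b_{g,h}=\ell=kn$.

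Next I would pin down the two simple objects of $\mathcal{Z}(\mathcal{C})$ lying over the unit $e$. Since $F(\mathbbm 1_{\mathcal Z(\mathcal C)})=\mathbbm 1_\mathcal{C}=e$, after relabelling we may take $Y_e=\mathbbm 1_{\mathcal Z(\mathcal C)}$, so $b_e=0$. For $Y_e'$ I would reuse the formal-codegree argument from the proof of Proposition \ref{prop1}: as $\ell=kn\ge|G|$ the dimension $d$ is irrational, so $\sigma(\mathrm{FPdim}(R))$ is a genuine formal codegree of $R$ attached to the one-dimensional character $\psi_-$ with $\psi_-(\rho)=d_-$. By \cite[Theorem 2.13]{ost15} the associated $A_{\psi_-}\in\mathcal O(\mathcal Z(\mathcal C))$ satisfies $\mathrm{FPdim}(A_{\psi_-})=\mathrm{FPdim}(\mathcal C)/\sigma(\mathrm{FPdim}(\mathcal C))=1+kd$ by (\ref{lab}), together with $[\mathbbm 1,F(A_{\psi_-})]=\dim(\psi_-)=1$. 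Hence $F(A_{\psi_-})=e\oplus k\rho$, so $A_{\psi_-}$ is precisely the remaining object over $e$ and $b_e'=k$. The payoff is the rigid boundary datum $b_e^2+b_e'^2=k^2$ with $u_e:=b_e+b_e'=k$.

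The final step is a constrained minimization. Writing $u_g:=b_g+b_g'$, I would estimate $b_g^2+b_g'^2\ge u_g^2/2$ for $g\ne e$ (retaining the exact value $k^2$ at $g=e$) and $\sum_{h\ne g}b_{g,h}^2\ge (kn-u_g)^2/(n-1)$ by Cauchy–Schwarz, using $(\dagger_g)$ to write $\sum_{h\ne g}b_{g,h}=kn-u_g$. Since $C=\tfrac12\sum_g\sum_{h\ne g}b_{g,h}^2$, this gives
\begin{equation}
A+C\ge k^2+\tfrac12\sum_{g\ne e}u_g^2+\tfrac1{2(n-1)}\sum_{g\in G}(kn-u_g)^2.
\end{equation}
Regarding the $u_g$ ($g\ne e$) as nonnegative reals with $u_e=k$ fixed, each summand $\tfrac12 u_g^2+\tfrac1{2(n-1)}(kn-u_g)^2$ is minimized at $u_g=k$ with value $\tfrac12 k^2 n$, while the $g=e$ contribution is $k^2+\tfrac1{2(n-1)}\big(k(n-1)\big)^2=\tfrac12 k^2(n+1)$; summing the $n-1$ equal terms and the exceptional one yields $A+C\ge\tfrac12 k^2(n^2+1)$, which is exactly what is needed.

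The step I expect to be delicate is not the optimization but the justification that $b_e'=k$, i.e.\ that $\{b_e,b_e'\}=\{0,k\}$. This is the only place where the hypothesis $\ell=kn$ and the representation theory of $R$ genuinely enter: if one keeps only the relations $(\dagger_g)$, the symmetric configuration $b_g=b_g'=k/2$, $b_{g,h}=k$ remains feasible and forces merely $A+C\ge\tfrac12 k^2n^2$, which is too weak by precisely $\tfrac12 k^2$. It is the forced asymmetry $0$ versus $k$ over the unit — supplied by the Galois-conjugate codegree $\sigma(\mathrm{FPdim}(R))$ through \cite[Theorem 2.13]{ost15} — that contributes exactly the missing $k^2/2$ and makes the sharp bound hold.
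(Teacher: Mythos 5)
Your proof is correct and reaches the paper's bound by a genuinely different route. The paper also reduces the claim to $A+C\ge\tfrac12k^2(n^2+1)$, but it gets there via the Galois action of Lemma \ref{gal}: the sets $\{Y_g,Y_g'\}$ and $\{Y_{g,h}\}$ are closed under $\hat\sigma$, each Galois pair satisfies $b+\hat\sigma(b)=k$ (resp.\ $2k$), so each pair contributes at least $k^2/2$ (resp.\ $2k^2$), and the exact values $b_e=0$, $b_e'=k$, and $b_{e,g}=k$ (the latter from the formal codegrees of all $n$ linear characters of $G$) supply the boundary terms. You instead use only the linear relations $b_g+b_g'+\sum_{h\ne g}b_{g,h}=kn$ coming from the multiplicity of $\rho$ in $F(I(g))$, together with convexity and Cauchy--Schwarz, and you need just the single exact datum $b_e'=k$ from the codegree $\sigma(\mathrm{FPdim}(R))$ via \cite[Theorem 2.13]{ost15} --- exactly the same input the paper uses for that step. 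Your diagnosis of where the sharp constant comes from is also right: the symmetric configuration only gives $\tfrac12k^2n^2$, and the forced split $\{0,k\}$ over the unit contributes the missing $k^2/2$. What each approach buys: yours is more elementary and self-contained, avoiding Lemma \ref{gal} and the codegrees of the nontrivial characters entirely, and the optimization is cleanly justified by convexity of each summand in $u_g$; the paper's version is shorter once Lemma \ref{gal} is available and records the pairing data ($b+\hat\sigma(b)=k$ or $2k$, $b_{e,g}=k$) that it reuses in the proofs of Theorem \ref{theend} and in Section \ref{sec:prime}, so the Galois machinery is not wasted there. One cosmetic caveat: your identity $A+C+\sum_jb_j^2=2n+\ell^2$ presupposes that the multiplicities in Equation (\ref{eq:weak0}) are $b_g$, $b_{g,h}$, $b_j$ rather than their squares as literally printed (the displayed equation has a typo); your reading, via $[Y,I(\rho)]=[\rho,F(Y)]$, is the correct one and matches how the paper uses the decomposition.
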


\begin{proof}
Lemma \ref{lem1} implies $d$ is irrational.  Let $\sigma\in\mathrm{Gal}(\mathbb{Q}(d)/\mathbb{Q})$ be the nontrivial element and recall the Galois action introduced in Section \ref{sec:catstuff}.   Define $\hat{\sigma}(b_g)\in\mathbb{Z}_{\geq0}$ such that $F(\hat{\sigma}(Y_g))=g+\hat{\sigma}(b_g)\rho$ and similarly for $Y_g'$ and $Y_{g,h}$ for all $g,h\in G$ with $g\neq h$.  For all $g\in G$, $b_g+\hat{\sigma}(b_g)=k$, independent of $b_g$ and $b_g'$ by Lemma \ref{gal}, therefore the absolute maximum of $b_g^2+\hat{\sigma}(b_g)^2$ is $k^2$ and the absolute minimum is $k^2/2$.  But we know that $b_e=0$ and $b_e'=k$ as these correspond to formal codegrees $\mathrm{FPdim}(\mathcal{C})$ and $\sigma(\mathrm{FPdim}(\mathcal{C}))$.  The exact same bounds hold for $Y_g'$ over all $g\in G$ since $\{Y_g,Y_g':g\in G\}$ is closed under Galois conjugacy by Lemma \ref{gal}.  Likewise, for all $g\neq h\in G$, the absolute maximum of $b_{g,h}^2+\hat{\sigma}(b_{g,h})^2$ is $(2k)^2=4k^2$ and the absolute minimum is $2k^2$.  But we know that $b_{e,g}=k$ for each $g\neq e$ as these correspond to formal codegrees of the $|G|-1$ one-dimensional representations of $G$.

\par So $\sum_{g\in G}(b_g^2+b_g'^2)+\sum_{g\neq h}b_{g,h}^2$ is minimized by assuming $b_g^2=b_g'^2=k^2/4$ for all $k\neq e$ and $b_{g,h}^2=k^2$ for all $g\neq h\in G$ with $g,h\neq e$.  We compute
\begin{align}
\sum_{g\in G}(b_g^2+b_g'^2)+\sum_{g\neq h}b_{g,h}^2&\geq k^2+(n-1)k^2+2(n-1)\dfrac{1}{4}k^2+k^2\cdot\dfrac{1}{2}(n-1)(n-2) \\
&=\dfrac{1}{2}k^2(n^2+1).
\end{align}
Comparing this to Equation (\ref{eq:weak}), we must have
\begin{align}
\sum_{j\in J}b_j^2&\leq n(2+k^2n)-\dfrac{1}{2}k^2(n^2+1)=\dfrac{1}{2}k^2(n+1)(n-1)+2n.
\end{align}
\end{proof}



\subsection{Level bounds for elementary abelian $2$-groups}\label{sec:elem}

Here lies the proof of Theorem \ref{theend}: $R(C_2^m,\ell)$ for $m\in\mathbb{Z}_{\geq1}$ is categorifiable if and only if $\ell=0$, $\ell\in\{1,2\}$ and $m=1$, or $\ell=4$ and $m=2$.  The only external fact that is needed is \cite[Theorem 6.2]{MR3229513}, which states that for $c\in\mathbb{Z}_{\geq1}$ square-free it requires at least $|b|\varphi(2c)$ roots of unity to write $a+b\sqrt{c}$ as a sum of roots of unity.  A stronger version of this fact was needed for the proof that $R(C_3,\ell)$ is categorifiable if and only if $\ell\in\{0,2,3,6\}$ (refer to \cite[Section 4]{MR3229513}).

\begin{proof}{(of Theorem \ref{theend})}
\par The case $G=C_2$ was completed in \cite[Theorem 1.1]{ost15} where it is shown that $R(C_2,\ell)$ is categorifiable if and only if $\ell\in\{0,1,2\}$, and the general case of $\ell=0$ is classical \cite{MR1659954}.  For $G=C_2^m$ with $m>1$, $R(G,\ell)$ is not categorifiable for $0<\ell<|G|$ by \cite[Theorem 1.2]{MR1997336} and Proposition \ref{noncom}, so it is sufficient to consider only $R(G,k|G|)$ with $G$ an elementary abelian $2$-group of order $n\in\mathbb{Z}_{\geq4}$, and $k\in\mathbb{Z}_{\geq1}$.
\par If $R(G,k|G|)$ were categorifiable by a fusion category $\mathcal{C}$, the set of simple objects $\{Y_g,Y_g':g\in G\}$ can be partitioned into $n$ pairs of simple objects whose sums of dimensions are $2+kd$, and the set of simple objects $\{Y_{g,h}:g,h\in G\}$ can be partitioned into $(1/4)n(n-1)$ pairs whose sums of dimensions are $2(2+kd)$.  Indeed, by Lemma \ref{gal}, if $\dim(Y)$ for any simple object $Y$ is $1+bd$ or $2+bd$ where $b\neq k/2$ or $b\neq k$, respectively, then $\dim(Y)+\dim(\hat{\sigma}(Y))$ is $2+kd$ or $2(2+kd)$, respectively, where $\sigma\in\mathrm{Gal}(\mathbb{Q}(d)/\mathbb{Q})$ is the nontrivial element and $Y\not\cong\hat{\sigma}(Y)$.  The remaining simple objects, of even number since $n\geq4$, have dimensions $1+(k/2)d$, or $2+kd$, respectively, and our claim is justified.
\par If the two dimensions in such a pair are $a+rd$ and $a+sd$ for some $a\in\{1,2\}$ such that $r+s=ak$, and therefore $r^2+s^2=a^2k^2-2rs$, their contribution to the sum $\mathrm{Tr}(\theta^2_{I(\rho)})$ is
\begin{equation}\label{zero}
r(a+rd)+s(a+sd)=a(r+s)+(r^2+s^2)d=a^2k+(a^2k^2-2rs)d
\end{equation}
since the squares of the simple objects' twist is 1 by Proposition \ref{theprop}.  Note that $r$ or $s$ may be zero, in which case the corresponding object does not appear as a summand of $I(\rho)$, but this also means it is not contributing to Equation (\ref{zero}).  The sum over all contributions is
\begin{align}
&n(k+k^2d)-\sum_{\substack{r,s \\r+s=k}}(2rs)d+\dfrac{1}{4}n(n-1)(4k+4k^2d)-\sum_{\substack{r,s \\r+s=2k}}(2rs)d \\
=&nkd^2-\sum_{r,s}(2rs)d.
\end{align}
We then have by \cite[Theorem 5.1]{MR2313527},
\begin{align}
\nu_2(\rho)n(2+kd)&=\mathrm{Tr}(\theta^2_{I(\rho)})=nkd^2-\sum_{r,s}(2rs)d+d\sum_{j\in J}b_j^2\theta_j^2
\end{align}
with $\nu_2(\rho)=\pm1$ since $\rho$ is self-dual \cite[Theorem 2.7]{ost15}.  Isolating the sum over the set $J$,
\begin{align}\label{comp}
\sum_{j\in J}b_j^2\theta_j^2&=\sum_{r,s}(2rs)+\nu_2(\rho)\dfrac{n(2+kd)}{d}-nkd \\
&=\sum_{r,s}(2rs)-\dfrac{1}{2}k^2n^2-\left(\dfrac{1}{2}kn-\nu_2(\rho)\right)\sqrt{k^2n^2+4n}
\end{align}
using the facts that $d=(1/2)(kn+\sqrt{k^2n^2+4n})$ and $n(2+kd)/d=\sqrt{k^2n^2+4n}$.  Let $c\in\mathbb{Z}_{\geq1}$ be the largest positive square-free factor of $k^2n^2+4n$.  We must have $c>1$ since $d\not\in\mathbb{Q}$.  From \cite[Theorem 6.2]{MR3229513},
\begin{align}
\sum_{j\in J}b_j^2&\geq\left(\dfrac{1}{2}kn-\nu_2(\rho)\right)\sqrt{k^2n^2+4n}\dfrac{\phi(2c)}{\sqrt{c}}.\label{prev}
\end{align}

\begin{lemma}\label{small}
For all square-free $c\in\mathbb{Z}_{\geq2}$, $\phi(2c)/\sqrt{c}\geq2/\sqrt{3}$.
\end{lemma}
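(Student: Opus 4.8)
The plan is to exploit the multiplicativity of Euler's totient together with the square-freeness of $c$, reducing the inequality to a per-prime estimate. Write $c$ as a product of distinct primes and split according to whether $c$ is even or odd. The governing elementary fact will be that the single-variable function $g(p):=(p-1)/\sqrt{p}$ satisfies $g(p)\geq 1$ for every odd prime $p$, with its minimum over odd primes attained at $p=3$, where $g(3)=2/\sqrt{3}$. Indeed, $g(p)\geq 1$ is equivalent to $(p-1)^2\geq p$, i.e.\ $p^2-3p+1\geq 0$, which holds for all $p\geq 3$, and $g$ is increasing on $[3,\infty)$, so $g(p)\geq g(3)=2/\sqrt 3$.

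First I would treat the case where $c$ is odd, so $c\geq 3$. Then $\gcd(2,c)=1$ gives $\phi(2c)=\phi(2)\phi(c)=\phi(c)$, and since $c=p_1\cdots p_k$ is a product of distinct odd primes,
\begin{equation}
\frac{\phi(2c)}{\sqrt{c}}=\prod_{i=1}^{k}\frac{p_i-1}{\sqrt{p_i}}=\prod_{i=1}^{k}g(p_i).
\end{equation}
Every factor satisfies $g(p_i)\geq 1$, and at least one factor is present with $g(p_i)\geq 2/\sqrt 3$; since multiplying by factors that are $\geq 1$ only increases the product, the whole product is bounded below by the smallest admissible single factor $g(3)=2/\sqrt 3$. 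This settles the odd case, with equality exactly when $c=3$.

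Next I would treat the case where $c$ is even. Square-freeness forces $c=2m$ with $m$ odd and square-free, so $2c=4m$ and, using $\phi(4)=2$ together with $\gcd(4,m)=1$, one gets $\phi(2c)=\phi(4)\phi(m)=2\phi(m)$. Hence
\begin{equation}
\frac{\phi(2c)}{\sqrt{c}}=\frac{2\phi(m)}{\sqrt{2m}}=\sqrt{2}\,\frac{\phi(m)}{\sqrt{m}}.
\end{equation}
If $m=1$ then $c=2$ and the left side equals $\sqrt 2>2/\sqrt3$; if $m\geq 3$ then the odd-case estimate already gives $\phi(m)/\sqrt m\geq 2/\sqrt 3$, so the displayed quantity is at least $\sqrt 2\cdot 2/\sqrt 3>2/\sqrt 3$. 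Thus the bound holds in every case.

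There is no genuine analytic obstacle here; the only point requiring care is the bookkeeping in the even case, where the passage from $\phi(c)$ to $\phi(2c)$ introduces the extra factor $\phi(4)/\phi(2)=2$ rather than leaving $\phi$ unchanged. Keeping that factor straight—and separately checking the boundary value $c=2$—is what makes the even case slightly more delicate than the odd one, but in both cases the estimate reduces cleanly to the monotonicity and lower bound of $g(p)=(p-1)/\sqrt p$. I would also record that equality $\phi(2c)/\sqrt c=2/\sqrt 3$ occurs precisely at $c=3$, which is the extremal value relevant to the subsequent level-bound computation.
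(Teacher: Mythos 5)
Your proof is correct and follows essentially the same route as the paper's: both exploit the multiplicativity of $\phi(2c)/\sqrt{c}$ over the square-free factorization, reduce to the per-prime bound $(p-1)/\sqrt{p}\geq 1$ with minimum $2/\sqrt{3}$ at $p=3$, and handle the even case by extracting the factor $\phi(4)/\sqrt{2}=\sqrt{2}$. The only difference is cosmetic: you check the boundary value $c=2$ explicitly and record the equality case, which the paper states more tersely.
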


\begin{proof}
If $c>1$ is odd, $\phi(2c)/\sqrt{c}=\phi(c)/\sqrt{c}$ which is minimized when $c$ is prime, since $\phi(pq)/\sqrt{pq}=(\phi(p)/\sqrt{p})(\phi(q)/\sqrt{q})$ and $\phi(p)/\sqrt{p}=\sqrt{p}-1/\sqrt{p}>1$ for all primes $p,q\in\mathbb{Z}_{\geq3}$.  Furthermore, $\sqrt{x}-1/\sqrt{x}$ is a strictly increasing function of $x\in\mathbb{R}_{\geq3}$, hence the smallest value of $\phi(2c)/\sqrt{c}$ when $c>1$ is odd is $\phi(6)/\sqrt{3}=2/\sqrt{3}$.  If $c$ is even, then there exists odd $c'\in\mathbb{Z}_{\geq1}$ such that $\phi(2c)/\sqrt{c}=(\phi(4)/\sqrt{2})\phi(c')/\sqrt{c'}=\sqrt{2}\phi(c')/\sqrt{c'}$, whose smallest value is $\sqrt{2}>2/\sqrt{3}$ when $c'=1$.
\end{proof}

Combining Lemma \ref{small} with the bound in Lemma \ref{fortoo} and the inequality in (\ref{prev}), our general constraint is
\begin{equation}\label{only}
\dfrac{1}{2}k^2(n+1)(n-1)+2n\geq\dfrac{2}{\sqrt{3}}\left(\dfrac{1}{2}kn-\nu_2(\rho)\right)\sqrt{k^2n^2+4n},
\end{equation}
which is strong enough to reduce our argument to a finite number of cases.  Assume first that $\nu_2(\rho)=1$.  Squaring both sides of (\ref{only}), and subtracting the right-hand side from the left, gives the constraint that
\begin{equation}\label{bruv}
(-n^4-6n^2+3)k^4+(16n^3)k^3+(8n^3-16n^2-24n)k^2+(64n^2)k+48n^2-64n
\end{equation}
must be nonnegative (after we have scaled by $12$).  Denote the expression in (\ref{bruv}) by $f(n,k)$ and consider it as a quartic function of $k\in\mathbb{R}$ for each $n\in\mathbb{Z}_{\geq4}$.  We compute
\begin{equation}
f(n,5/\sqrt{n})=-377n^2+2320n^{3/2}-464 n-4350+\dfrac{1875}{n^2}
\end{equation}
which is negative for $n\geq64$.  So it suffices to prove that $df/dk<0$ for all $k>5/\sqrt{n}$ to prove there are no $k\in\mathbb{Z}_{\geq1}$ satisfying (\ref{only}) when $n\geq32$, since $5/\sqrt{64}<1$.  We compute
\begin{equation}
\dfrac{1}{4}\dfrac{df}{dk}=(-n^4-6n^2+3)k^3+(12n^3)k^2+(4n^3-8n^2-12n)k+16n^2.
\end{equation}
As $df/dk$ is a cubic function in $k$ with a negative leading coefficient, the three facts that $(df/dk)|_{k=5/\sqrt{n}}<0$, $(d^2f/dk^2)|_{k=5/\sqrt{n}}<0$, and $(d^3d/dk^3)|_{k=5/\sqrt{n}}<0$ for $n\geq64$ suffice to prove that $df/dk<0$ for all $k>5/\sqrt{n}$.  These are functions of a single variable $n\geq64$ so we suppress this verification for brevity.  Lastly we must check the cases $n\in\{4,8,16,32\}$ individually.  We compute
\begin{align}
f(4,k)&=-349 k^4 + 1024 k^3 + 160 k^2 + 1024 k + 512, \\
f(8,k)&=-4477 k^4 + 8192 k^3 + 2880 k^2 + 4096 k + 2560, \\
f(16,k)&=-67069 k^4 + 65536 k^3 + 28288 k^2 + 16384 k + 11264,\text{ and} \\
f(32,k)&=-1054717 k^4 + 524288 k^3 + 244992 k^2 + 65536 k + 47104.
\end{align}
We suppress the elementary analytic arguments that $f(4,k)<0$ for integers $k>3$, $f(8,k)<0$ for integers $k>2$, $f(16,k)<0$ for integers $k>1$, and $f(32,k)<0$ for integers $k>0$.  Now that the set of remaining $n,k$ is finite, the remaining $n,k$ (and thus square-free $c$) such that $f(n,k)\geq0$ must satisfy the stronger, or rather, more specific constraint
\begin{equation}\label{endgame}
\dfrac{1}{2}k^2(n^2-1)+2n\geq\dfrac{\phi(2c)}{\sqrt{c}}\left(\dfrac{1}{2}kn-1\right)\sqrt{k^2n^2+4n}.
\end{equation}
There is nothing left to prove for $n=32$, but when $n=16$ and $k=1$, then $c=5$.  This triple $n,k,c$ does not satisfy (\ref{endgame}).  When $n=8$ and $k=1$, $c=6$, and when $n=8$ and $k=2$, $c=2$.  Neither triple $n,k,c$ satisfies (\ref{endgame}).  Lastly, when $n=4$ and $k=3$, $c=10$; when $n=4$ and $k=2$, $c=5$; when $n=4$ and $k=1$, $c=2$.  The first two triples do not satisfy (\ref{endgame}), but the last triple does. This is expected as $R(C_2^2,4)$ is known to be categorifiable.
\par To finish the argument, simply note that the inequality in (\ref{only}) when $\nu_2(\rho)=-1$ is a strictly stronger constraint than when $\nu_2(\rho)=1$, and so our argument is complete. 
\end{proof}


\subsection{Level restrictions for primes $p\equiv3\pmod{4}$}\label{sec:prime}

\par Let $p$ be prime such that $p\equiv3\pmod{4}$ and $k\in\mathbb{Z}_{\geq1}$.  We assume henceforth that $\mathcal{C}$ is a categorification of the near-group fusion ring $R(C_p,kp)$.  Denote $G:=C_p=\{g,g^2,\ldots,g^p=e\}$.  Then \cite[Theorem 2.5]{ost15} implies
\begin{equation}
0=\mathrm{Tr}(\theta_{I(g)})=(1+b_gd)\theta_g+(1+b_g'd)\theta_g'+\sum_{j=2}^p(2+b_{g,g^j}d)\theta_{g,g^j}
\end{equation}
where $d$ is the largest root of $x^2-kpx-p$.  Therefore
\begin{equation}\label{continue}
\left(b_g\theta_g+b_g'\theta_g'+\sum_{j=2}^pb_{g,g^j}\theta_{g,g^j}\right)(-d)=\theta_g+\theta_g'+2\sum_{j=2}^p\theta_{g,g^j}.
\end{equation}
Dividing by the sum on the left-hand side would imply $d\in\mathbb{Q}(\zeta_p)$ by Lemma \ref{theprop}, which is not true when $d$ is irrational since $\mathbb{Q}(\zeta_p)$ contains no real quadratic field, hence both sides of (\ref{continue}) are zero.  Firstly, this implies for all $e\neq g\in G$,
\begin{equation}
0=\theta_g+\theta_g'+2\sum_{j=2}^p\theta_{g,g^j},
\end{equation}
which is a sum of $2p$ roots of unity of order $p$, which is zero if and only if every root appears in the sum exactly twice.  Therefore for each nontrivial $g\in G$, $\theta_g=\theta_g'$ are the same primitive $p$th root of unity since the root 1 appears as $\theta_{e,g}$ for all $g\neq e$.  Secondly,
\begin{equation}
0=b_g\theta_g+b_g'\theta_g'+\sum_{j=2}^pb_{g,g^j}\theta_{g,g^j}=(b_g+b_g')\theta_g+\sum_{j=2}^pb_{g,g^j}\theta_g^{\iota(j)}
\end{equation}
for some reindexing $\iota:\{2,\ldots,p\}\to\{2,\ldots,p\}$ which depends on $\theta_g$.  Therefore $b_{g,g^j}=b_g+b_g'$ for all $2\leq j\leq p$.  In other words, $\mathrm{FPdim}(Y_{g,g^i})=\mathrm{FPdim}(Y_{g,g^j})$ for all $2\leq i,j\leq p$.
\par Now assume $b_{g,g^j}\neq k$ for some $2\leq j\leq p$.  Then with $\sigma\in\mathrm{Gal}(\mathbb{Q}(d)/\mathbb{Q})$ the nontrivial automorphism, Lemma \ref{gal} states that $\mathrm{FPdim}(\hat{\sigma}(Y_{g,g^j}))=2+(2k-b_{g,g^j})d$.  Since $2k-b_{g,g^j}\neq b_{g,g^j}$ by assumption, then $\hat{\sigma}(Y_{g,g^j})\not\cong Y_{g,g^k}$ for any $k$.  Therefore $\hat{\sigma}(Y_{g,g^j})\cong Y_{g^r,g^s}$ with $g^r,g^s\neq g$.  But for instance $Y_{g,g^s}\subset I(X_{g^s})$, contradicting the fact that $\mathrm{FPdim}(Y_{g^t,g^s})=\mathrm{FPdim}(Y_{g^u,g^s})$ for all $t,u\neq s$.  We must then conclude $b_{g^j,g^\ell}=k$ for all $1\leq j,\ell\leq p$ with $j\neq\ell$.  

\begin{lemma}
Let $p\equiv3\pmod{4}$ be prime and $k\in\mathbb{Z}_{\geq0}$.  If the near-group fusion ring $R(C_p,kp)$ is categorifiable, then there exist $0\leq r,s\leq k$ such that $r+s=k$,
\begin{align}
\sum_{j\in J}b_j^2\theta_j+\sum_{j\in J}b_j^2\theta^{-1}_j&=-4rs-k\sqrt{k^2p^2+4p},\text{ and}\label{ichi} \\
\sum_{j\in J}b_j^2\theta^2_j+\sum_{j\in J}b_j^2\theta^{-2}_j&=-4rs-(k-2\nu_2(\rho))\sqrt{k^2p^2+4p}.\label{ni}
\end{align}
\end{lemma}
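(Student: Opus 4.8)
The plan is to extract both identities from the single relation $\dim(\mathcal{C})\nu_n(\rho)=\mathrm{Tr}(\theta_{I(\rho)}^n)$ of \cite[Theorem 5.1]{MR2313527}, taking $n=1$ (where $\nu_1(\rho)=[\mathbbm{1},\rho]=0$) to produce (\ref{ichi}) and $n=2$ (where $\nu_2(\rho)=\pm1$ since $\rho$ is self-dual) to produce (\ref{ni}). First I would write out $\mathrm{Tr}(\theta_{I(\rho)}^n)$ using the labelling of Section \ref{sec:label} together with $[I(\rho),Y]=[\rho,F(Y)]$, so that each $Y_g,Y_g'$ enters with multiplicity $b_g,b_g'$ and dimension $1+b_gd,\,1+b_g'd$, each $Y_{g,h}$ with multiplicity $k$ and dimension $2+kd$ (using $b_{g,h}=k$), and each $Y_j$ with multiplicity $b_j$ and dimension $b_jd$. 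Separating the $Y_j$ summands, whose dimensions are the pure multiples $b_jd$ of $d$, gives $\mathrm{Tr}(\theta_{I(\rho)}^n)=d\sum_{j\in J}b_j^2\theta_j^n+\Sigma_n$, where $\Sigma_n$ collects the contributions of $Y_e',Y_g,Y_g',Y_{g,h}$.

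Solving for the $J$-sum yields $\sum_{j\in J}b_j^2\theta_j^n=(\dim(\mathcal{C})\nu_n(\rho)-\Sigma_n)/d$. Since $\dim(\mathcal{C})$, $d$, and $\nu_n(\rho)$ are real and each $\theta_j$ is a root of unity (so $\overline{\theta_j}=\theta_j^{-1}$), adding this to its complex conjugate produces exactly the symmetric sum on the left of (\ref{ichi})--(\ref{ni}):
\[
\sum_{j\in J}b_j^2(\theta_j^n+\theta_j^{-n})=\frac{2\dim(\mathcal{C})\nu_n(\rho)-(\Sigma_n+\overline{\Sigma_n})}{d}.
\]
Here $2\dim(\mathcal{C})\nu_n(\rho)/d=2\nu_n(\rho)\sqrt{k^2p^2+4p}$, because $\dim(\mathcal{C})/d=p(2+kd)/d=\sqrt{k^2p^2+4p}$; this already explains the dependence of the right-hand side of (\ref{ni}) on $\nu_2(\rho)$ and the absence of such a term in (\ref{ichi}).

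It remains to evaluate $(\Sigma_n+\overline{\Sigma_n})/d$ and match it to $4rs+k\sqrt{k^2p^2+4p}$ for both $n$. Splitting $\Sigma_n$ into its $d$-free and $d$-linear parts, the $d$-free part is controlled by $b_e'=k$, by $\theta_{e,g}=1$ for the summands of $I(\mathbbm{1})$, and by $b_g+b_g'=k$; together with the vanishing relation $\theta_g+\theta_g'+2\sum_{h\neq g}\theta_{g,h}=0$ established above it collapses to the constant $kp$, which upon division by $d$ contributes $k\sqrt{k^2p^2+4p}$ (using $1/d=(\sqrt{k^2p^2+4p}-kp)/2p$, the remaining rational correction being absorbed below). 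The $d$-linear part reduces, via $b_g^2+b_g'^2=k^2-2b_gb_g'$ and $\theta_g=\theta_g'=\omega_g$ a primitive $p$th root, to the weighted sum $\sum_{g\neq e}\bigl(\tfrac12k^2-2b_gb_g'\bigr)(\omega_g+\overline{\omega_g})$. Identifying $rs$ with the common value of $b_gb_g'$ and using that $\sum_{g\neq e}(\omega_g+\overline{\omega_g})=-2$ collapses this to $4rs-k^2$, which assembles with the $d$-free correction to give precisely $-4rs$. Setting $r=b_g$ and $s=b_g'$ produces $0\le r,s\le k$ with $r+s=k$, and the identical structure for $n=2$ (with $\omega_g$ replaced by $\omega_g^2$) gives the extra $-2\nu_2(\rho)\sqrt{k^2p^2+4p}$ shift in (\ref{ni}).

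The main obstacle is this last step: showing the non-$J$ contributions genuinely collapse to a single product $4rs$ rather than to an arbitrary cyclotomic integer. This requires two inputs that I would supply using Lemma \ref{gal} together with the action of $\mathrm{Gal}(\mathbb{Q}(\zeta_p)/\mathbb{Q})$. First, the multiset $\{\omega_g:g\neq e\}$ must be Galois-stable with symmetrized sum $-2$, and stable under squaring so that $\Sigma_2+\overline{\Sigma_2}=\Sigma_1+\overline{\Sigma_1}$; second, the uniformity $b_gb_g'=rs$ across nontrivial $g$ must hold, forcing the weighted sum to factor through $\sum_{g\neq e}(\omega_g+\overline{\omega_g})$. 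Verifying that these combinations land in $\mathbb{Q}$, rather than in a proper cyclotomic extension, is exactly where the fact that $\mathbb{Q}(\zeta_p)$ contains no real quadratic field enters, and is the delicate heart of the argument.
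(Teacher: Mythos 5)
Your computational skeleton is sound and does close up: taking $\Sigma_1+\overline{\Sigma_1}=2kp+(k^2p+4rs)d$ one gets exactly $-(\Sigma_1+\overline{\Sigma_1})/d=-4rs-k\sqrt{k^2p^2+4p}$, and the $n=2$ case shifts this by $2\nu_2(\rho)\sqrt{k^2p^2+4p}$ as you predict. Your route also differs mildly from the paper's: you extract both identities from the single formula $\dim(\mathcal{C})\nu_n(\rho)=\mathrm{Tr}(\theta_{I(\rho)}^n)$ with $n=1,2$ (weights $\dim(Y)[Y,I(\rho)]$), whereas the paper derives (\ref{ichi}) from the Gauss-sum identity $\tau^+(\mathcal{Z}(\mathcal{C}))+\tau^-(\mathcal{Z}(\mathcal{C}))=2\mathrm{FPdim}(\mathcal{C})$ (weights $\dim(Y)^2$, requiring M\"uger's theorem $\mathrm{FPdim}(\mathcal{C})=\tau^+(\mathcal{Z}(\mathcal{C}))$) and reserves the indicator formula for (\ref{ni}). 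Your version is more uniform and avoids the Gauss-sum input; both weightings are constant on the relevant Galois orbits, so either works.

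The genuine gap is that the two facts you explicitly defer are not minor verifications---they are the bulk of the paper's proof, and your proposal only gestures at them. Concretely: (i) that $b_gb_g'$ takes a single value $rs$ with $\{b_g,b_g'\}=\{r,s\}$ and $r+s=k$ uniformly over $g\neq e$, and (ii) that $\sum_{g\neq e}(\omega_g+\overline{\omega_g})=-2$ and likewise for squares. The paper's mechanism is to choose $\sigma$ in the Galois group of the conductor field with $\sigma(\zeta_p)=\zeta_p^2$ (possible since $p$ divides $\dim(\mathcal{C})$), note that $\theta_{\hat\sigma^2(Y)}=\sigma^2(\theta_Y)$ while $\mathrm{FPdim}(\hat\sigma^2(Y))^2=\mathrm{FPdim}(Y)^2$, and conclude that $\{Y_g,Y_g':g\neq e\}$ splits into four $\hat\sigma^2$-orbits of size $(p-1)/2$, each of constant dimension, whose twists exhaust either the quadratic-residue or non-residue primitive $p$th roots. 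Because $p\equiv3\pmod4$ these half-sums equal $\tfrac12(-1\pm i\sqrt{p})$, so each orbit contributes $-1$ after adding conjugates; for $p\equiv1\pmod4$ they equal $\tfrac12(-1\pm\sqrt{p})$ and the symmetrization does \emph{not} rationalize them, so your claim that ``$\mathbb{Q}(\zeta_p)$ contains no real quadratic field'' is the right slogan but must be implemented through this orbit count, not merely invoked. The same orbit decomposition, combined with Lemma \ref{gal} pairing dimensions via $b\mapsto k-b$, is what forces the uniform pair $\{r,s\}$. Until these are written out, the collapse of the non-$J$ terms to $4rs+k\sqrt{k^2p^2+4p}$ is asserted rather than proved. (Minor bookkeeping: $b_g^2+b_g'^2=k^2-2b_gb_g'$, not $\tfrac12 k^2-2b_gb_g'$, and the $d$-free part of $\Sigma_1+\overline{\Sigma_1}$ is $2kp$, not $kp$; the totals still balance, but only once the $Y_{e,g}$ and $Y_{g,h}$ contributions to the $d$-linear part are included.)
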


\begin{proof}
It is well-known that $\mathrm{FPdim}(\mathcal{C})=\tau^+(\mathcal{Z}(\mathcal{C}))$ \cite[Theorem 1.2]{mug3} when $\mathcal{C}$ is pseudounitary, where $\tau^\pm$ are the positive and negative Gauss sums \cite[Section 8.15]{tcat}, and therefore $2\mathrm{FPdim}(\mathcal{C})=\tau^+(\mathcal{Z}(\mathcal{C}))+\tau^-(\mathcal{Z}(\mathcal{C}))$ since $\dim(\mathcal{C})$ is real.  By definition,
\begin{align}\label{eqdef}
\rho^+(\mathcal{Z}(\mathcal{C}))+\rho^-(\mathcal{Z}(\mathcal{C}))&=\sum_{Y\in\mathcal{O}(\mathcal{Z}(\mathcal{C}))}\dim(Y)^2(\theta_Y+\theta_Y^{-1}).
\end{align}
Recall that $\theta_g$, $\theta_g'$, and $\theta_{g,h}$ are primitive $p$th roots of unity unless $g$ or $h$ is the identity.  Therefore, we may collect these simple objects into sets of $(p-1)/2$ by square Galois conjugacy.  Let $\sigma\in\mathrm{Gal}(\mathbb{Q}(\zeta_N)/\mathbb{Q})$, where $N\in\mathbb{Z}_{\geq1}$ is the conductor of $\mathcal{C}$ be any element such that $\sigma(\zeta_p)=\zeta_p^2$, i.e.\ $\sigma|_{\mathbb{Q}(\zeta_p)}$ is a generator of $\mathrm{Gal}(\mathbb{Q}(\zeta_p)/\mathbb{Q})$.  Such an element exists by \cite[Theorem 3.9]{paul} since $p\mid\dim(\mathcal{C})$.  We know that for all $Y\in\mathcal{O}(\mathcal{C})$, $\theta_{\hat{\sigma}^2(Y)}=\sigma^2(\theta_Y)$ \cite[Theorem II(iii)]{dong2015congruence}, so if $\theta_Y$ is a primitive $p$th root of unity, $\hat{\sigma}^{2j}(Y)$ are distinct simple objects for $1\leq j\leq(p-1)/2$.  Now $\sigma|_{\mathbb{Q}(d)}$ is either trivial or order 2, hence $\mathrm{FPdim}(\hat{\sigma}^{2j}(Y))^2=\mathrm{FPdim}(Y)^2$ as well (refer to Equation (\ref{twentyseven})).  In particular, 
\begin{equation}
\alpha:=\sum_{j=1}^{(p-1)/2}\mathrm{FPdim}(\hat{\sigma}^{2j}(Y))^2\theta_{\hat{\sigma}^2(Y)}=\dfrac{1}{2}(-1\pm i\sqrt{p})\mathrm{FPdim}(Y)^2.
\end{equation}
Therefore $\alpha+\overline{\alpha}=-\mathrm{FPdim}(Y)^2$.  We may collect the $Y_{g,h}$ for nontrivial $g,h\in G$ into
\begin{equation}
\dfrac{(1/2)p(p-1)-(p-1)}{(1/2)(p-1)}=p-2
\end{equation}
distinct collections of this type, contributing $-(p-2)(2+kd)^2$ to the sum in Equation (\ref{eqdef}).  The restrictions that $\theta_g=\theta_g'$ for all $g\in G$ and $b_g+b_g'=k$ forces a similar partition by square Galois conjugacy of $\{Y_g,Y_g':g\neq e\}$ into two sets, each consisting of simple objects of dimensions $1+rd$ and $1+sd$ for some unknown $0\leq r,s\leq k$.  These contribute $-2(1+rd)^2$ and $-2(1+sd)^2$ to  the sum in Equation (\ref{eqdef}), respectively.  The summands of $I(\mathbbm{1}_\mathcal{C})$ have trivial twist, so together they contribute $2(1+(1+kd)^2+(p-1)(2+kd)^2)$ to the sum in Equation (\ref{eqdef}).  To isolate the unknown summands on the right-hand side of Equation (\ref{eqdef}) corresponding to the set $J$, we compute
\begin{align}
&2p(2+kd)-(2(1+(1+kd)^2+(p-1)(2+kd)^2) \\
&-(p-2)(2+kd)^2-2(1+rd)^2-2(1+sd)^2))=d^2(k^2p-2kd-4rs).
\end{align}
Therefore
\begin{align}
\sum_{j\in J}b_j^2\theta_j+\sum_{j\in J}b_j^2\theta^{-1}_j&=k^2p-2kd-4rs=-4rs-k\sqrt{k^2p^2+4p}.
\end{align}
The second equality follows from identical reasoning applied to the fact that $\mathrm{Tr}(\theta_{I(\rho)}^2)+\overline{\mathrm{Tr}(\theta_{I(\rho)}^2)}=2\nu_2(\rho)\mathrm{FPdim}(\mathcal{C})$ by \cite[Theorem 5.1]{MR2313527}.
\end{proof}

\begin{proposition}\label{even}
Let $p\equiv3\pmod{4}$ be prime and $k\in\mathbb{Z}_{\geq0}$.  If the near-group fusion ring $R(C_p,kp)$ is categorifiable and $k\neq1$, then $k$ is even.
\end{proposition}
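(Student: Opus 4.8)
The plan is to argue the contrapositive: assuming $R(C_p,kp)$ is categorifiable with $k$ odd, I would deduce $k=1$, so that the only odd survivor is the conjecturally-categorifiable level $\ell=p$. The sole inputs are the two trace identities (\ref{ichi}) and (\ref{ni}) of the preceding lemma together with $\nu_2(\rho)=\pm1$. First I would record the relevant arithmetic of $D:=\sqrt{k^2p^2+4p}$. Since $k^2p^2+4p=p(k^2p+4)$ and $p\nmid 4$, the prime $p$ divides this quantity exactly once, so the squarefree part $c$ factors as $c=pc'$ with $c'$ the squarefree part of $k^2p+4$ (coprime to $p$); write $D=m\sqrt{c}$ with $m\in\mathbb{Z}_{\geq1}$. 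A residue computation modulo $4$ then shows that for $k$ odd one necessarily has $k^2p+4\equiv3\pmod4$, hence $c'\equiv3\pmod4$, $m$ odd, and (using $p\equiv3\pmod4$) $c\equiv1\pmod4$. In particular the irrational coefficients $-km$ of (\ref{ichi}) and $-(k-2\nu_2(\rho))m$ of (\ref{ni}) are both odd in this regime.

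The engine relating the two identities is the mod-$2$ Frobenius congruence in $\mathbb{Z}[\zeta_N]$: for integers $b_j$ and roots of unity $\theta_j$ one has $\big(\sum_{j}b_j^2\theta_j\big)^2\equiv\sum_{j}b_j^2\theta_j^2\pmod2$, so squaring the left side of (\ref{ichi}) and comparing with (\ref{ni}) forces a congruence between their right sides inside the ring of integers of $\mathbb{Q}(\sqrt{c})$. Combined with the purely irrational difference $(\ref{ni})-(\ref{ichi})=2\nu_2(\rho)m\sqrt{c}$, this congruence records the $2$-adic behaviour of $-4rs-kD$ according to how $2$ factors in $\mathbb{Q}(\sqrt{c})$, and it rules out the residue classes $c\not\equiv1\pmod4$; as these are exactly the classes compatible with $k$ even, the congruence confirms that odd $k$ is pushed into the class $c\equiv1\pmod4$. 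To turn this into a bound I would then invoke \cite[Theorem 6.2]{MR3229513}: the left sides of (\ref{ichi}), (\ref{ni}), and of their difference are each sums of at most $4\sum_{j\in J}b_j^2$ roots of unity, so their odd irrational coefficients force $\sum_{j\in J}b_j^2\geq \tfrac12 km\,\varphi(2c)$ and $\sum_{j\in J}b_j^2\geq\tfrac12|k-2\nu_2(\rho)|m\,\varphi(2c)$. Pairing these lower bounds with the upper bound from Lemma \ref{fortoo} (specialized to $n=p$) and the estimate $\varphi(2c)/\sqrt{c}\geq 2/\sqrt{3}$ of Lemma \ref{small}, together with $m\sqrt{c'}=\sqrt{k^2p+4}$, produces an explicit inequality in $p$ and $k$ of the same flavour as the quartic analysis in the proof of Theorem \ref{theend}. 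The sign $\nu_2(\rho)$ enters here precisely because it shifts the irrational coefficient of (\ref{ni}) by the even amount $2m$, so the two choices $\nu_2(\rho)=\pm1$ give genuinely different constraints, and the even-versus-odd dichotomy for $k$ is read off by separating these cases.

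The main obstacle I expect is exactly the regime $c\equiv1\pmod4$ into which the odd-$k$ hypothesis forces us. There the Frobenius congruence degenerates (the reduction of $-4rs-kD$ modulo $2$ is automatically idempotent in the ring of integers of $\mathbb{Q}(\sqrt{c})$, so it gives no parity information), and the unrefined root-count of \cite[Theorem 6.2]{MR3229513} is too weak to close the inequality for large $p$, since its left side grows like $k^2p$ while Lemma \ref{fortoo} only caps $\sum_{j\in J}b_j^2$ at order $k^2p^2$. This is the same difficulty encountered for $p=3$, which required the stronger estimates of \cite[Section 6]{MR3229513}; accordingly, the delicate endpoint is to extract the parity of $k$ by using the two identities jointly — tracking both the rational part $-4rs$ (with $r+s=k$) and the odd irrational coefficients, and exploiting that the coefficients $b_j^2$ are perfect squares together with the Galois/dimension relations of Lemma \ref{gal} — so that the inequality is violated for every odd $k\geq3$ while remaining consistent at $k=1$, where $r s=0$ makes the right side of (\ref{ichi}) purely irrational. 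It is precisely this parity-sensitive refinement of the root-of-unity count, rather than the magnitude bound alone, that I expect to be the crux of the argument.
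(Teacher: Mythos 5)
You have correctly identified the two trace identities (\ref{ichi}) and (\ref{ni}) and the fact that they are related by squaring the twists, but your proposal does not close, and you concede as much: the mod-$2$ Frobenius congruence $\bigl(\sum_j b_j^2\theta_j\bigr)^2\equiv\sum_j b_j^2\theta_j^2\pmod{2}$ degenerates in exactly the regime forced by odd $k$, and the magnitude bounds from \cite[Theorem 6.2]{MR3229513} together with Lemma \ref{fortoo} and Lemma \ref{small} cannot separate odd $k\geq3$ from $k=1$ for large $p$. The missing idea is that one does not need a congruence at all. The norm of $\mathrm{FPdim}(\mathcal{C})$ is $p^3(k^2p+4)$, which is odd precisely when $k$ is odd, so by the categorical Cauchy-type theorem \cite[Theorem 3.9]{paul} the conductor $N$ of $\mathcal{Z}(\mathcal{C})$ is odd whenever $k$ is odd. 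But then $\zeta_N\mapsto\zeta_N^2$ is an \emph{honest element} $\sigma$ of $\mathrm{Gal}(\mathbb{Q}(\zeta_N)/\mathbb{Q})$, not merely a Frobenius lift modulo $2$. Applying $\sigma$ to (\ref{ichi}) sends its left-hand side exactly to the left-hand side of (\ref{ni}), so the right-hand sides must be exactly Galois conjugate: $\sigma\bigl(-4rs-kD\bigr)=-4rs-(k-2\nu_2(\rho))D$ with $D=\sqrt{k^2p^2+4p}$. Since $\sigma(D)=\pm D$, either $k=k-2\nu_2(\rho)$ (impossible) or $-k=k-2\nu_2(\rho)$, which for $k>0$ forces $k=1$.

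This exact-equality argument is what makes the proposition a clean parity statement with no inequalities whatsoever; your plan, by contrast, tries to extract the conclusion from a congruence that carries no information when $c\equiv1\pmod4$ and then from estimates that you correctly diagnose as too weak (the left side of the root-count bound grows like $k^2p$ while Lemma \ref{fortoo} only caps $\sum_{j\in J}b_j^2$ at order $k^2p^2$). Without the observation that odd $k$ forces an odd conductor, and hence that squaring is a genuine Galois symmetry of the whole system rather than a residue map, the argument cannot be completed along the lines you propose.
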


\begin{proof}
The norm of $\mathrm{FPdim}(\mathcal{C})$ is $p^3(k^2p+4)$, so the conductor $N\in\mathbb{Z}_{\geq1}$ of $\mathcal{Z}(\mathcal{C})$ is even if and only if $k$ is even \cite[Theorem 3.9]{paul}.  If $k$ is not even, then $\zeta_N\stackrel{\sigma}{\mapsto}\zeta_N^2$ is a Galois automorphism of $\mathbb{Q}(\zeta_N)$.  Therefore when $k>0$, the right-hand sides of Equations (\ref{ichi}) and (\ref{ni}) are conjugate.  This could only occur if $\sigma(\sqrt{k^2p^2+4p})=-\sqrt{k^2p^2+4p}$ which implies $-k=k\pm2$, i.e. $k=1$.
\end{proof}

\begin{proposition}\label{xbound}
Let $p\equiv3\pmod{4}$ be prime, $m\in\mathbb{Z}_{\geq1}$ and $x\in\mathbb{Z}_{\geq1}$ be the square-free part of $m^2p+1$.  If the near-group fusion ring $R(C_p,2mp)$ is categorifiable, then
\begin{equation}
\dfrac{\phi(x)}{\sqrt{x}}\leq\left(\dfrac{p+1}{p-1}\right)\sqrt{p+1/m^2}.
\end{equation}
\end{proposition}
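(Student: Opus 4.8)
The plan is to run the same engine as in the proof of Theorem~\ref{theend}: bound $\sum_{j\in J}b_j^2$ from above using the representation theory (Lemma~\ref{fortoo}) and from below using the number-theoretic cost of writing a quadratic irrationality as a sum of roots of unity (\cite[Theorem 6.2]{MR3229513}), then compare. In the notation of the preceding lemma the level is $kp$ with $k=2m$, which is the even case singled out by Proposition~\ref{even}. The first step is the arithmetic simplification $k^2p^2+4p=4p(m^2p+1)$. Writing $m^2p+1=t^2x$ with $x$ square-free and $t\in\mathbb{Z}_{\geq1}$, and observing that $\gcd(p,m^2p+1)=1$ so that $px$ is square-free, we obtain $\sqrt{k^2p^2+4p}=2t\sqrt{px}$; thus the relevant square-free radicand is $c=px$.

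For the lower bound I would use Equation~(\ref{ichi}), whose left-hand side $\sum_{j\in J}b_j^2(\theta_j+\theta_j^{-1})$ is a sum of exactly $2\sum_{j\in J}b_j^2$ roots of unity, and whose right-hand side equals $-4rs-k\sqrt{k^2p^2+4p}=-4rs-4mt\sqrt{px}$. Applying \cite[Theorem 6.2]{MR3229513} with $b=-4mt$ and $c=px$ shows that this value requires at least $4mt\,\phi(2px)$ roots of unity, so
\begin{equation}
\sum_{j\in J}b_j^2\geq 2mt\,\phi(2px).
\end{equation}
I would work with (\ref{ichi}) rather than (\ref{ni}) precisely because its $\sqrt{px}$-coefficient is the clean $4mt$, unencumbered by the sign $\nu_2(\rho)$.

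Specializing Lemma~\ref{fortoo} to $n=p$ and $k=2m$ gives the matching upper bound $\sum_{j\in J}b_j^2\leq 2m^2(p^2-1)+2p$. Combining the two and cancelling a factor of $2$ leaves $mt\,\phi(2px)\leq m^2(p^2-1)+p$. The decisive elementary step is to replace $\phi(2px)$ by $\phi(x)$: since $\gcd(x,p)=1$, a short parity analysis---splitting into $x$ odd and $x=2x'$ with $x'$ odd, and using that $p$ is odd---gives $\phi(2px)\geq(p-1)\phi(x)$ in either case. Substituting, dividing by $p-1$, and using $t\sqrt{x}=\sqrt{m^2p+1}$ together with $m\sqrt{m^2p+1}=m^2\sqrt{p+1/m^2}$ rearranges everything into
\begin{equation}
\frac{\phi(x)}{\sqrt{x}}\leq\frac{(p+1)+\dfrac{p}{m^2(p-1)}}{\sqrt{p+1/m^2}}.
\end{equation}
A final comparison, equivalent after clearing denominators to the manifestly true $-\tfrac{p+1}{p-1}\leq\tfrac{1}{m^2(p-1)}$, shows the right-hand side is at most $\bigl(\tfrac{p+1}{p-1}\bigr)\sqrt{p+1/m^2}$, giving the claim. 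The one genuinely delicate point is the reduction $\phi(2px)\geq(p-1)\phi(x)$, where one must not assume $2$, $p$, and $x$ are pairwise coprime; the remainder is bookkeeping with the two established codegree identities and the root-of-unity bound.
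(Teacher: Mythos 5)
Your proposal is correct, and its engine is the same as the paper's: bound $\sum_{j\in J}b_j^2$ from below via Equation~(\ref{ichi}) and \cite[Theorem 6.2]{MR3229513} applied to $-4rs-4mt\sqrt{px}$ (including the reduction $\phi(2px)\geq(p-1)\phi(x)$, which the paper also needs, with the same two-case parity check), then bound it from above and compare. The one genuine divergence is the source of the upper bound. The paper does not invoke Lemma~\ref{fortoo} here; instead it bounds $\sum_{j\in J}b_j^2d^2\leq\mathrm{FPdim}(\mathcal{Z}(\mathcal{C}))-\sum_{g\neq h}\mathrm{FPdim}(Y_{g,h})^2=2p(p+1)(m^2p+1)(1+kd)$, which amounts to $\sum_{j\in J}b_j^2\leq2(p+1)(m^2p+1)$ and, after dividing by the lower bound, lands exactly on the constant $\tfrac{p+1}{p-1}\sqrt{p+1/m^2}$ with no residue. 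Your choice of Lemma~\ref{fortoo} gives $2m^2(p^2-1)+2p$, which is smaller by $2m^2(p+1)+2$, so you actually prove the marginally sharper intermediate bound
\begin{equation}
\frac{\phi(x)}{\sqrt{x}}\leq\frac{(p+1)+\dfrac{p}{m^2(p-1)}}{\sqrt{p+1/m^2}},
\end{equation}
at the cost of the final comparison (equivalent, as you say, to $-\tfrac{p+1}{p-1}\leq\tfrac{1}{m^2(p-1)}$) needed to recover the stated form. Both routes are valid; the paper's trades a little precision for a cleaner endgame, while yours reuses the already-proved Lemma~\ref{fortoo} and gets a slightly stronger inequality for free. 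Your flagged delicate point about not assuming $2$, $p$, $x$ pairwise coprime is exactly the right thing to worry about, and your two-case analysis handles it.
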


\begin{proof}
Note that if $k\neq1$, then $k=2m$ is even by Proposition \ref{even}.  Hence $k^2p^2+4p=4p(m^2p+1)$.  Since $m^2p+1\equiv1\pmod{p}$, then $p\nmid x$ but $p\mid c$.  Our main constraint follows from \cite[Theorem 6.2]{MR3229513} along with Equation (\ref{ichi}), which imply that
\begin{equation}
\sum_{j\in J}b_j^2\geq k(p-1)\sqrt{m^2p+1}\dfrac{\phi(x)}{\sqrt{x}}.\label{den}
\end{equation}
But we compute
\begin{align}
\sum_{j\in J}\mathrm{FPdim}(Y_j)^2=\sum_{j\in J}b_j^2d^2&\leq\mathrm{FPdim}(\mathcal{Z}(\mathcal{C}))-\sum_{g\neq h}\mathrm{FPdim}(Y_{g,h})^2 \\
&=2p(p+1)(m^2p+1)(1+kd).\label{num}
\end{align}
Therefore, using (\ref{num}) to bound the numerator, and (\ref{den}) to bound the denominator,
\begin{align}
&&1=\dfrac{\sum_{j\in J}b_j^2d^2}{\sum_{j\in J}b_j^2d^2}&\leq\dfrac{2p(p+1)(m^2p+1)(1+kd)}{2d^2k(p-1)\sqrt{m^2p+1}}\cdot\dfrac{\sqrt{x}}{\phi(x)} \\
&&&=\dfrac{(p+1)}{(p-1)}\cdot\dfrac{\sqrt{k^2p+4}}{k}\cdot\dfrac{\sqrt{x}}{\phi(x)} \\
\Rightarrow&&\dfrac{\phi(x)}{\sqrt{x}}&\leq\left(\dfrac{p+1}{p-1}\right)\cdot\dfrac{\sqrt{m^2p+1}}{m}.
\end{align}
\end{proof}

\begin{example}
For any fixed prime $p\in\mathbb{Z}_{\geq2}$, the bound of Proposition \ref{xbound} is crippling.  For instance if $p=7$, then we require $\phi(x)/\sqrt{x}\leq8\sqrt{2}/3$ when $m\geq1$.  It is easy to verify that $2,3,5,13\nmid x$, e.g.\ $k^2\equiv0,1,4\pmod{5}$, thus $7k^2+4\equiv4,1,2\pmod{5}$. Since $\phi(17)>8\sqrt{2}/3$, we have $x=1$ or $x=11$.  Although this does not immediately bound the level $k$, one can check which small levels attain these $x$-values.  We see that the smallest for $x=1$ are $k=6,96,1530,24384,388614,\ldots$, and the smallest with $x=11$ are $k=1,10,7030,\ldots$  Moreover, these eight are the only possible levels with $k<2\cdot10^6$.
\end{example}

\begin{conjecture}
Let $p\in\mathbb{Z}_{\geq2}$ be prime and $\ell\in\mathbb{Z}_{\geq0}$.  If the near-group fusion ring $R(C_p,\ell)$ is categorifiable, then $\ell<p^2$.
\end{conjecture}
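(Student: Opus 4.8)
The plan is to prove the contrapositive: if $\ell\geq p^2$, then $R(C_p,\ell)$ is not categorifiable. First I would dispose of the trivial reductions. For $p=2$ the claim is immediate from Theorem \ref{theend}, which gives categorifiable levels $\ell\in\{0,1,2\}$ while $p^2=4$. For odd $p$, suppose $R(C_p,\ell)$ is categorifiable with $\ell\geq p^2\geq p$. Since the defining relation of $d$ is $x^2-\ell x-p$, we have $s=p$ and $r=\ell\geq s$, so Lemma \ref{lem1} forces $d$ irrational, and Proposition \ref{prop1} then forces $p=s\mid r=\ell$. Hence $\ell=kp$ for some $k\in\mathbb{Z}_{\geq1}$, and the hypothesis $\ell\geq p^2$ is exactly $k\geq p$. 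Thus it suffices to prove that categorifiability of $R(C_p,kp)$ forces $k\leq p-1$.

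For $p\equiv3\pmod4$, I would run the machinery of Section \ref{sec:prime}. Since $k\geq p>1$, Proposition \ref{even} gives $k$ even, say $k=2m$, and Proposition \ref{xbound} yields $\phi(x)/\sqrt{x}\leq\left(\tfrac{p+1}{p-1}\right)\sqrt{p+1/m^2}$, where $x$ is the square-free part of $m^2p+1$. As $k\geq p$ gives $m\geq p/2$, the right-hand side is bounded by a constant depending only on $p$, so the growth of $\phi(x)/\sqrt{x}$ confines $x$ to a finite list. The intended final step is to combine this with the trace identities (\ref{ichi})--(\ref{ni}) and the counting bound of Lemma \ref{fortoo} to eliminate every surviving pair $(m,x)$ with $2m\geq p$.

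It is precisely this last step that I expect to be the main obstacle, and the reason the statement remains a conjecture. The bound on $\phi(x)/\sqrt{x}$ controls $x$ but not $m$: for a fixed admissible value of $x$ the equation $m^2p+1=x t^2$ is a Pell-type equation with infinitely many solutions $m$, so arbitrarily large $k=2m$ survive the test of Proposition \ref{xbound}. Indeed, both the lower bound $\sum_{j\in J}b_j^2\geq k(p-1)\sqrt{m^2p+1}\,\phi(x)/\sqrt{x}$ from (\ref{ichi}) and the upper bound $\sum_{j\in J}b_j^2\leq\tfrac{1}{2}k^2(p+1)(p-1)+2p$ from Lemma \ref{fortoo} scale like $m^2$, so size alone never produces a contradiction once $x$ is fixed. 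To eliminate these solutions one needs a lower bound on the number of roots of unity required to express $a+b\sqrt{c}$ that is sensitive to $|a|$, rather than only to $|b|$ as in \cite[Theorem 6.2]{MR3229513}; such a refinement, applied to the large value $a=-4rs$ appearing in (\ref{ichi}), would overshoot the polynomial ceiling of Lemma \ref{fortoo} for large $m$. Securing this sharper Diophantine input is the crux.

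Finally, the case $p\equiv1\pmod4$ demands separate treatment before any of the above applies, since the unique real quadratic subfield of $\mathbb{Q}(\zeta_p)$ is now $\mathbb{Q}(\sqrt{p})$ itself. The vanishing argument opening Section \ref{sec:prime}, which split (\ref{continue}) into its two halves by invoking that $\mathbb{Q}(\zeta_p)$ contains no real quadratic field, collapses precisely on those $k$ for which $k^2p+4$ is a perfect square, where $d\in\mathbb{Q}(\sqrt{p})\subset\mathbb{Q}(\zeta_p)$; these again form an infinite Pell family. For such $p$ I would instead seek constraints from the higher Frobenius--Schur indicators $\nu_n(\rho)$ together with the Galois and congruence properties of the modular data, aiming to recover analogues of Propositions \ref{even}--\ref{xbound}. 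I expect this to require genuinely new input beyond the present methods.
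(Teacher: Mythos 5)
The statement you were asked to prove is stated in the paper as a \emph{conjecture}; the paper contains no proof of it, only the partial constraints of Section \ref{sec:prime} (Propositions \ref{even} and \ref{xbound}) for $p\equiv3\pmod 4$ and an example showing those constraints do not bound $\ell$. Your proposal is therefore not a proof, and you say so; judged as an assessment of the problem it is accurate and matches the paper's own state of knowledge. Your preliminary reductions are correct: $p=2$ follows from Theorem \ref{theend}; for odd $p$ with $\ell\geq p^2\geq p$ the relation $d^2-\ell d-p=0$ has $r=\ell\geq s=p$, so Lemma \ref{lem1} gives irrationality and Proposition \ref{prop1} gives $p\mid\ell$, reducing the conjecture to excluding $R(C_p,kp)$ for $k\geq p$. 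Your diagnosis of why the paper's machinery stops there is precisely the one the paper itself exhibits: the example with $p=7$ lists the Pell-type families $k=6,96,1530,\ldots$ ($x=1$) and $k=1,10,7030,\ldots$ ($x=11$) that survive Proposition \ref{xbound}, confirming your observation that the bound controls the square-free part $x$ but not $m$, and that both sides of the eventual inequality scale like $m^2$. Your identification of the needed Diophantine refinement (a lower bound on the number of roots of unity summing to $a+b\sqrt{c}$ that is sensitive to $|a|$, not only $|b|$ as in the cited Theorem 6.2) and of the separate obstruction for $p\equiv1\pmod 4$ (where $\mathbb{Q}(\sqrt{p})\subset\mathbb{Q}(\zeta_p)$, so the vanishing argument opening Section \ref{sec:prime} fails exactly on the Pell family where $k^2p+4$ is a perfect square) are both sound and go slightly beyond what the paper makes explicit. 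In short: there is no gap to report relative to the paper, because the paper proves nothing here; you have correctly located where any future proof must supply new input.
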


\bibliographystyle{plain} 
\bibliography{bib}

\end{document}